\documentclass[10pt, a4paper]{article}
\usepackage {fullpage}						
\usepackage {ucs}
\usepackage [utf8x]{inputenc}
\usepackage {graphicx}
\usepackage [bookmarks=false, pdfstartview={FitH}, colorlinks=true, linkcolor=blue, citecolor=blue, urlcolor=blue]{hyperref} 

\usepackage {amsfonts,amsmath,amsthm,amssymb}
\usepackage {latexsym, bbm}
\usepackage {graphics, graphicx}
\usepackage {fancyhdr}
\usepackage[table]{xcolor}
\usepackage {epstopdf} 						

\usepackage {multicol}						
\usepackage {multirow}  					
\usepackage {tabularx}						
\newcolumntype{C}[1]{>{\centering\arraybackslash}p{#1}}
\usepackage {soul} 							
\usepackage {authblk} 						
\usepackage {datetime} 						
\usepackage {subfigure}						
\usepackage {parskip}						
\usepackage {setspace}						
\usepackage[font=small,textfont=it,width=0.97\textwidth]{caption}		
\usepackage {mathtools}						
\usepackage {array}							

\usepackage{filemod}						

\usepackage{constants}

\newtheorem{theorem}{Theorem}[section]
\newtheorem{remark}[theorem]{Remark}
\newtheorem{lem}[theorem]{Lemma}

\newconstantfamily{h}{symbol=h}

\numberwithin{equation}{section}

\renewcommand{\(}{\left(}
\renewcommand{\)}{\right)}

\newcommand{\cs}{c^\text{S}}
\newcommand{\cd}{c^\text{D}}

\newcommand{\as}{a^\text{S}}
\newcommand{\ad}{a^\text{D}}

\newcommand{\memt}{\mu_\text{EMT}}

\def\softd{{\leavevmode\setbox1=\hbox{d}%
		\hbox to 1.05\wd1{d\kern-0.4ex{\char039}\hss}}}


\newcommand{\aos}{a_{1*}}
\newcommand{\ats}{a_{2*}}	
\newcommand{\ados}{\ad_{1*}}
\newcommand{\asos}{\as_{1*}}
\newcommand{\vos}{v_{1*}}

\newcommand{\rdev}{\rho_\text{dev}}

\newcommand{\vfree}{\rdev}
\newcommand{\vfreeo}{\rho_{\text{dev,} 1}}
\newcommand{\vfreet}{\rho_{\text{dev,} 2}}

\newcommand{\adts}{\ad_{2*}}
\newcommand{\asts}{\as_{2*}}
\newcommand{\vts}{v_{2*}}
\newcommand{\coz}{{C^{1,0}(\bar Q_T)}}
\newcommand{\coo}{{C^{1,1}(\bar Q_T)}}
\newcommand{\hoeloh}[1]{{C^{1+ #1, (1 + #1)/2}(\bar Q_T)}}
\newcommand{\hoels}[2]{{C^{#1, #2}(\bar Q_T)}}
\newcommand{\hoelohz}[1]{{C^{1+ #1, (1 + #1)/2}(\bar Q_{T_0})}}
\newcommand{\hoelsz}[2]{{C^{#1, #2}(\bar Q_{T_0})}}
\newcommand{\sobto}{{W_p^{2,1}(Q_T)}}
\newcommand{\linf}{{L^\infty(Q_T)}}
\newcommand{\linfo}{{L^\infty(\Omega)}}

\newcommand{\lqn}{{L^q(\Omega)}}
\newcommand{\lpn}{{L^p(\Omega)}}

\newcommand{\ai}{a^I}
\newcommand{\aj}{a^J}

\newcommand{\adg}{\ad_\gamma}
\newcommand{\asg}{\as_\gamma}
\newcommand{\adgp}{(\adg)^p}
\newcommand{\asgp}{(\asg)^p}
\newcommand{\adgph}{(\adg)^{p/2}}
\newcommand{\asgph}{(\asg)^{p/2}}

\newcommand{\memto}{\mu_{\text{EMT},1}}
\newcommand{\memtt}{\mu_{\text{EMT},2}}

\newcommand {\startenv} {\vskip 0.05em
	\begin{tabular}{||l}\parbox[t]{0.95\linewidth}}
	\newcommand {\stopenv} {\end{tabular}\vskip 0.05em}

\title{Existence and uniqueness of global classical solutions to a two species cancer invasion haptotaxis model}

\author[1]{Jan Giesselmann\thanks{jgiessel@mathematik.uni-stuttgart.de}}
\author[2]{Niklas Kolbe\thanks{kolbe@uni-mainz.de}}
\author[2]{M\'aria Luk\'{a}\v{c}ov\'{a}-Medvi\softd ov\'{a}\thanks{lukacova@uni-mainz.de}}
\author[2,3]{Nikolaos Sfakianakis\thanks{sfakiana@math.uni-heidelberg.de}}
\affil[1]{Institute of Applied Analysis and Numerical Simulation, University of Stuttgart}
\affil[2]{Institute of Mathematics, Johannes Gutenberg-University Mainz}
\affil[3]{Institute of Applied Mathematics, University of Heidelberg}

\begin{document}
	\maketitle

\begin{abstract}
	We consider a haptotaxis cancer invasion model that includes two families of cancer cells. Both families, migrate on the extracellular matrix and proliferate.
	Moreover the model describes an epithelial-to-mesenchymal-like transition between the two families, as well as a degradation and a self-reconstruction process of the extracellular matrix. We prove positivity and conditional global existence and uniqueness of the classical solutions of the problem for large initial data.
\end{abstract}

\section{Introduction}
Cancer research is a multidisciplinary effort to understand the causes of cancer and to develop strategies for its diagnosis and treatment.
The involved disciplines include the medical science, biology, chemistry, physics, informatics, and mathematics. 
From a mathematical point of view, the study of cancer has been an active research field since the 1950s and addresses different biochemical processes relevant to the development of the disease,
see e.g. \cite{Preziosi.2003, Bellomo.2008, Vainstein.2012, Michor.2008, Roose.2007}. 

In particular, a large amount of the research focuses on the modelling of the \textit{invasion} of the \textit{Extracellular Matrix} (ECM);
the first step in \textit{cancer metastasis} and one of the \textit{hallmarks of cancer}, \cite{Hanahan.2000,  Perumpanani.1996, Chaplain.2005, Perthame.2014}. The invasion of the ECM, involves also a secondary family of cancer cells that is more resilient to cancer therapies. These cells are believed to possess \textit{stem cell-like} properties, such as self-renewal and differentiation, as well as the ability to metastasize, i.e. detach from the primary tumour, afflict secondary sites within the organism and engender new tumours \cite{Brabletz.2005,Johnston.2010}. These cells are termed \textit{Cancer Stem Cells} (CSCs) and originate from the more usual \textit{Differentiated Cancer Cells} (DCCs) via a cellular differentiation program that is related to another cellular differentiation program found also in normal tissue, the \textit{ Epithelial-Mesenchymal Transition} (EMT) \cite{Mani.2008, Gupta.2009, Reya.2001}.

\begin{figure}[t]
	\centering
	\includegraphics[width=0.4\linewidth]{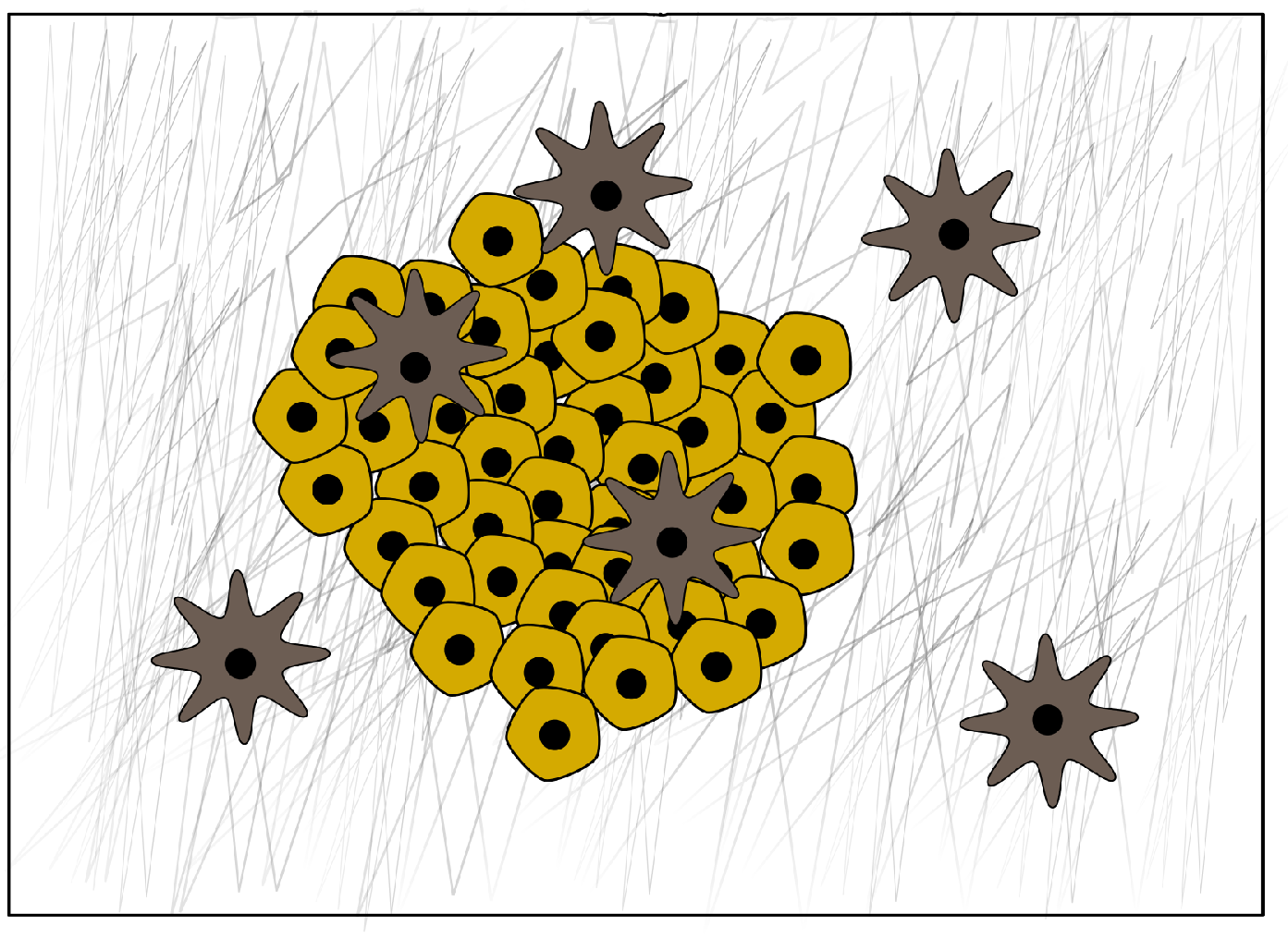}
	
	\begin{tabular}{C{4em}C{4em}C{4em}}
		\includegraphics[width=1.5em]{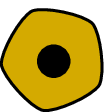}
			&\includegraphics[width=2em]{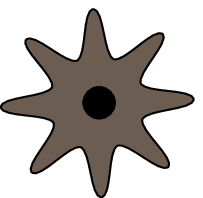}
			&\includegraphics[width=4em]{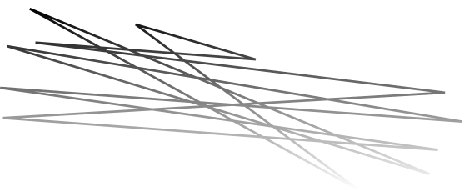}\\
		DCC&CSC&ECM
	\end{tabular}
	\caption{Graphical description on the model \eqref{eq:reduced_II}. The more aggressive CSCs escape the main body of the tumour and invade the ECM faster than the DCCs. At the same time, cancer secreted MMPs degrade the ECM.}
	\label{fig:modelgraphically}
\end{figure}

Both types of cancer cells invade the ECM and while doing so, affect its architecture, composition, and  functionality. One of the methods they use, is to secrete \textit{matrix metalloproteinases}  (MMPs),
i.e. enzymes that degrade the ECM and allow for the cancer cells to move through it more freely, \cite{Gross.1962,Egeblad.2002}.

During the EMT and the subsequent invasion of the ECM, \textit{chemotaxis}\footnote{cellular movement under the influence of one or more chemical stimuli},
and \textit{haptotaxis}\footnote{cellular movement along gradients of cellular adhesion sites or ECM bound chemoattractants}, play fundamental role \cite{Condeelis.2011,Rao.2003}.
These processes are typically modelled using Keller-Segel (KS) type systems,
i.e. macroscopic deterministic models that were initially developed to describe the chemotactic movement and aggregation of \textit{Dictyostelium discoideum} bacteria.
These models were introduced in \cite{Patlak.1953,KS.1970} and were later (re-)derived using a many particle system approach in \cite{Stevens.2000}.
They are known to potentially (according to the spatial dimension and the initial mass) blow-up in finite time and their analysis has been a field of intensive research, e.g. \cite{Blanchet.2006, Schmeiser.2009}.

In a similar spirit, KS-like models have been used to model cancer invasion while taking into account chemotaxis, haptotaxis, and other processes important in development of cancer, see e.g. \cite{Anderson.2000,Szymanska.2009}. Although these models are simplifications of the biochemical reality of the tumour, their solutions display complex dynamics  and their mathematical analysis is challenging. We refer indicatively to some relevant results on the analysis of these models. It is by far not an exhaustive list of the topic, rather an insight to analytical approaches for similar models. 

In \cite{Czochra.2010} a single family of cancer cells is considered. The model is haptotaxis with cell proliferation, matrix degradation by the MMPs, without matrix remodelling. In this work global existence of weak solutions is proven. In addition, the solutions are shown to be uniformly bounded using the method of ``bounded invariant rectangles'', which can be applied once the model is reformulated in divergence form using a particular change of variables. 

In \cite{tao2011global} the author considers a haptotaxis model with one type of cancer cells, which accounts for self-remodelling of the ECM, and ECM degradation by MMPs.
With respect to the MMPs, the model is parabolic. The decoupling between the PDE governing the cancer cells, and the ODE describing the ECM, is facilitated by a particular non-linear change of variables. The global existence of classical solutions follows by a series of delicate a-priori estimates and corresponding limiting processes.

In \cite{Winkler.2014} a single family of cancer cells is considered that responds in chemotactic-haptotactic way to its environment. The ECM is degraded by the MMPs and is self-remodelled. The diffusion of the MMPs is assumed to be very fast and the resulting equation is elliptic. Global existence of classical solutions follows after a-priori estimates, that are established using energy-type arguments.

In \cite{Stinner.2015} two species of cancer cells are considered using a motility-proliferation dichotomy hypothesis on the cancer cells. Further assumptions include the matrix degradation and (self-)remodelling, as well as a type of radiation therapy. The authors prove global existence of weak solutions via an appropriately chosen ``approximate'' problem and entropy-type estimates. 

For further results on the analysis of similar models we refer to the works \cite{Corrias.2004, horstmann2005boundedness, Walker.2007, Tao.2009, Hillen.2013}.


In our paper the cancer invasion model features DCCs, with their density denoted by $\cd$, CSCs, denoted as $\cs$, and the EMT transition between them.
We consider the model in two space dimensions and  assume that both families of cancer cells perform a haptotaxis biased random motion modelled by the combination of diffusion and advection terms.
We assume moreover that they proliferate with a rate that is influenced by the local density of the total biomass.
The ECM $v$ is assumed to be degraded by the MMPs $m$ which in turn are produced by the cancer cells. They diffuse freely in the environment and degrade with a constant rate.

The model proposed in \cite{Sfakianakis.2017, Hellmann.2016} reads as follows:

\begin{equation}\label{eq:reduced_II}
\left\{
	\begin{aligned}
		\cd_t
			&= \Delta \cd -\chi_D\nabla \cdot \(\cd \nabla v\right) - \memt \, \cd
					+ \mu_D \, \cd (1-\cs-\cd-v)\phantom{^+},\\
		\cs_t
			&= \Delta \cs -\chi_S\nabla \cdot \(\cs \nabla v\right) 
					+ \memt \, \cd + \mu_S \, \cs (1-\cs-\cd-v)\phantom{^+},\\
		v_t
			&=-m v + \mu_v\, v (1-\cs-\cd-v),\\
		m_t
			&=\Delta m + \cs + \cd - m,
	\end{aligned}
	\right.
\end{equation}
with (fixed) coefficients $\chi_D, \chi_S, \mu_S, \mu_D , \mu_v >0$ and an EMT rate function $\memt$ whose properties will be specified below.

The system \eqref{eq:reduced_II} is complemented with the no-flux boundary conditions
\begin{equation}\label{eq:bcs_c}	
	\partial_\nu \cd - \chi_D\cd \partial_\nu v = \partial_\nu \cs - \chi_D\cs \partial_\nu v  = \partial_\nu m = 0\quad \text{in }\partial \Omega \times (0,T)
\end{equation}
and the initial data
\begin{equation}\label{eq:ics_c}
	\cd(\cdot, 0) = \cd_0,~\cs(\cdot, 0) = \cs_0,~v(\cdot, 0) = v_0,~m(\cdot, 0) = m_0\quad \text{on }\Omega,
\end{equation}
for which we assume that
\begin{equation}\label{eq:iniData_c}
	\cd_0, \cs_0, m_0 \geq 0,\quad 0\leq v_0 \leq 1,\quad \cd_0, \cs_0, m_0, v_0 \in C^{2+l}({\bar \Omega}),
\end{equation}
for a given $0 < l < 1$. The domain $\Omega \subset \mathbb R^2$  is bounded with smooth boundary $\partial \Omega$ that satisfies
\begin{equation}\label{eq:smoothDom}
\partial \Omega \in C^{2+l}.
\end{equation}

The model \eqref{eq:reduced_II} has been scaled with respect to reference values of the primary variables
and the coefficients of diffusion as well of the evolution of the MMPs  have been reduced to 1 since they do not participate in the final (conditional) global existence result.
For the complete coefficient/parameter set we refer to \cite{Sfakianakis.2017}.

We moreover assume that the parameters of the problem satisfy
\begin{equation}\label{eq:mud_bound}
	\mu_D\geq \chi_D \mu_v, \quad \mu_S\geq \chi_S \mu_v.
\end{equation}
This condition is crucial for the analysis presented in this paper. Similarly to the open problem posed at the end 
of \cite{tao2011global} it is not clear whether solutions to \eqref{eq:reduced_II} may blow up in case \eqref{eq:mud_bound}
does not hold.

We assume that the EMT rate $\memt$ is a function $\memt: \mathbb{R}^4 \rightarrow \mathbb{R}$, that is Lipschitz continuous, has Lipschitz continuous first derivatives, and satisfies moreover for $\mu_M> 0$,
\begin{subequations}

\begin{equation} \label{eq:memt_bound}
	0 \leq \memt \leq \mu_M.
\end{equation}
Due to the continuity, we get for $\memt$ that, 
\begin{align}
	\|\memt(\cd_1, \cs_1, v_1, m_1) - \memt(\cd_2, \cs_2, v_2, m_2) \|_\coz \leq & L ( \|\cd_1 - \cd_2\|_\coz + \|\cs_1 - \cs_2\|_\coz  \nonumber \\  &+ \|v_1 - v_2\|_\coz + \|m_1 - m_2\|_\coz ),  \label{eq:memt_lipschitz}
\end{align}
for an $L$ with
\begin{equation}
	L = \C \|(\cd_1, \cs_1, v_1, m_1) \|_\coz. \label{eq:memt_lipschitz2}
\end{equation}
 
\end{subequations}
Here $\bar Q_T$ is the closure of the cylinder
\begin{equation}\label{eq:cylinder}
	Q_T := \Omega \times (0,T).
\end{equation}

Let us note that throughout this work we will call solutions of \eqref{eq:reduced_II} {\it strong solutions} provided they are regular enough that all derivatives appearing in \eqref{eq:reduced_II} are weak and the solution belongs to the corresponding Sobolev space, e.g. $W^{2,1}_p(Q_T)$.
We refer to solutions of \eqref{eq:reduced_II} as {\it classical solutions} provided their regularity is such that all terms in \eqref{eq:reduced_II}  are point wise well-defined.
The main result in this work 
is the proof of existence and uniqueness of global classical solutions to the problem \eqref{eq:reduced_II}.

\begin{theorem}[Global existence]\label{thm:global.existence}
	Let $d=2$ and \eqref{eq:mud_bound} hold.
	Then for any $T>0$ and $0<l<1$ there exists a unique classical solution 
	\begin{equation*}
	(\cd, \cs, v, m) \in (\hoels{2+l}{1+l/2}) ^4,
	\end{equation*}
	 of the system \eqref{eq:reduced_II}--\eqref{eq:smoothDom} with $\cd, \cs, m \geq 0$ and $0 \leq v \leq 1$.
\end{theorem}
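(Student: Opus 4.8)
The plan is to establish local existence of a classical solution by a fixed-point argument and then to extend it globally by deriving a priori bounds that rule out blow-up on any finite interval $[0,T]$; uniqueness will follow from a Grönwall estimate for the difference of two solutions that exploits the Lipschitz bounds \eqref{eq:memt_lipschitz}--\eqref{eq:memt_lipschitz2} on $\memt$. The decisive tool throughout is the substitution $\ad=\cd\,e^{-\chi_D v}$ and $\as=\cs\,e^{-\chi_S v}$, which removes the haptotaxis drift: a direct computation gives $\nabla\cd-\chi_D\cd\nabla v=e^{\chi_D v}\nabla\ad$ and hence $\Delta\cd-\chi_D\nabla\cdot(\cd\nabla v)=\nabla\cdot(e^{\chi_D v}\nabla\ad)$, so the first two equations become uniformly parabolic in divergence form (the weight $e^{\chi_D v}$ lies between $1$ and $e^{\chi_D}$ once $0\le v\le1$ is known), and the no-flux condition \eqref{eq:bcs_c} turns into the homogeneous Neumann condition $\partial_\nu\ad=\partial_\nu\as=0$. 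For the local theory I would rewrite \eqref{eq:reduced_II} for $(\ad,\as,m)$ coupled to the ODE for $v$, freeze the lower-order coefficients, solve the resulting linear parabolic problems together with the linear ODE, and show that the solution map is a contraction on a short time interval in a space such as $(\coz)^4$; the regularity assumptions \eqref{eq:iniData_c}--\eqref{eq:smoothDom} and the Lipschitz properties of $\memt$ are exactly what this step requires.

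Next I would establish the qualitative bounds. Positivity of $\cd,\cs,m$ and $0\le v\le1$ follow from the maximum principle and the structure of the reactions: every reaction term in the $\cd$ equation carries a factor $\cd$, the $\cs$ and $m$ equations have the nonnegative sources $\memt\cd$ and $\cs+\cd$, while at $v=0$ one has $v_t=0$ and at $v=1$ one has $v_t=-m-\mu_v(\cs+\cd)\le0$. Substituting $v_t$ from the third equation, the reaction term for $\ad$ becomes $-\memt\ad+\ad\big[(\mu_D-\chi_D\mu_v v)(1-\cs-\cd-v)+\chi_D m v\big]$, and analogously for $\as$ with the additional EMT source $e^{(\chi_D-\chi_S)v}\memt\ad$. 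Here condition \eqref{eq:mud_bound} together with $0\le v\le1$ makes the prefactor $\mu_D-\chi_D\mu_v v\ge0$, so that for large $\cd$ the logistic factor $1-\cs-\cd-v\approx-e^{\chi_D v}\ad$ supplies a genuine negative quadratic damping $-(\mu_D-\chi_D\mu_v v)e^{\chi_D v}(\ad)^2$; the remaining contributions $\chi_D m v\,\ad$ and the cross coupling are lower order because $m,v$ are already bounded. Testing the divergence-form equations with $(\ad)^{p-1}$ and $(\as)^{p-1}$ and using this damping to absorb the growth (the EMT coupling entering the $\as$ estimate is controlled via $\memt\le\mu_M$ and Young's inequality) yields $L^p$ bounds uniform on $[0,T]$ for every $p$, and a Moser/Alikakos iteration upgrades these to $L^\infty$ bounds on $\ad,\as$, hence on $\cd,\cs$.

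With all four quantities bounded I would bootstrap the regularity. Writing the $\ad$ equation as $e^{\chi_D v}\partial_t\ad=\nabla\cdot(e^{\chi_D v}\nabla\ad)+(\text{bounded})$, De Giorgi--Nash--Moser theory gives Hölder continuity of $\ad,\as$ (hence $\cd,\cs$) \emph{without any control on} $\nabla v$, parabolic regularity does the same for $m$, and then $v=v_0\exp\!\big(\int_0^t[-m+\mu_v(1-\cs-\cd-v)]\,ds\big)$ shows $v\in\hoels{\alpha}{\alpha/2}$. Divergence-form Schauder estimates (needing only $e^{\chi_D v}\in\hoels{\alpha}{\alpha/2}$) then give $\nabla\ad,\nabla\as,\nabla m\in\hoels{\alpha}{\alpha/2}$. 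I expect the main obstacle to be the gradient of the non-diffusive ECM: differentiating the representation for $v$ produces $\nabla v(t)=\nabla v_0\,e^{\int_0^t f}+v_0 e^{\int_0^t f}\int_0^t\nabla f\,ds$ with $\nabla f=-\nabla m-\mu_v(e^{\chi_D v}\nabla\ad+e^{\chi_S v}\nabla\as)-\mu_v(\chi_D\cd+\chi_S\cs+1)\nabla v$, so $\nabla v$ occurs on both sides; since its coefficient is bounded and the remaining data are already Hölder, a Grönwall/Volterra argument closes this loop and delivers $\nabla v\in\hoels{\alpha}{\alpha/2}$. With $v\in\hoels{1+\alpha}{(1+\alpha)/2}$ the coefficient $e^{\chi_D v}$ is in $\hoels{1+\alpha}{(1+\alpha)/2}$, so a final application of non-divergence Schauder, followed by a standard bootstrap using the $C^{2+l}$ data in \eqref{eq:iniData_c}, upgrades $\ad,\as,m$ to $\hoels{2+l}{1+l/2}$; undoing the substitution places $\cd,\cs$ in the same class, and differentiating the ODE once more yields $v\in\hoels{2+l}{1+l/2}$ as well. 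Since all a priori bounds depend only on $T$, the local solution extends to $[0,T]$ for every $T>0$, which completes the proof.
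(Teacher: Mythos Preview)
Your overall architecture---the change of variables $\ad=\cd e^{-\chi_D v}$, $\as=\cs e^{-\chi_S v}$, local existence by a contraction argument, positivity and $0\le v\le1$ by comparison, $L^p$-to-$L^\infty$ bounds on $\ad,\as$, and a bootstrap to $\hoels{2+l}{1+l/2}$---matches the paper. There is, however, one genuine gap and one genuine methodological difference worth noting.

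\textbf{The gap.} In the $L^p$ iteration you dismiss the term $\chi_D m v\,\ad$ as ``lower order because $m,v$ are already bounded.'' At that point $m$ is \emph{not} known to be in $L^\infty$: its equation $m_t=\Delta m+\cs+\cd-m$ has source $\cs+\cd$, whose boundedness is precisely what the iteration is supposed to produce, so the argument is circular as stated. The paper resolves this by first proving uniform $L^1$ bounds on $\cd,\cs,m$ (Lemma~\ref{thm:l1_bounds}) and then invoking semigroup smoothing (Lemmas~\ref{thm:lemma-3.2.Tao}--\ref{thm:lemma-3.3-Tao}) to obtain $m\in L^{r'}(\Omega)$ for every finite $r'$ from merely $\cd+\cs\in L^1$; the term $\int m(\ad)^p$ is then handled not by claiming $m$ bounded but by Young's inequality together with a Gagliardo--Nirenberg interpolation that the dissipation $\int|\nabla(\ad)^{p/2}|^2$ absorbs, driving an $L^q\to L^{4q/3}$ iteration. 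Your idea of exploiting the logistic damping $-(\mu_D-\chi_D\mu_v v)e^{\chi_D v}(\ad)^2$ (which the paper simply discards) is a viable alternative---after testing, $m(\ad)^p\le \epsilon(\ad)^{p+1}+C_\epsilon m^{p+1}$, and the last integral is finite by the same semigroup lemma---but you still need that $m\in L^{p+1}$ step, not an $L^\infty$ bound on $m$.

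\textbf{The different route in the bootstrap.} For the regularity upgrade you propose De~Giorgi--Nash--Moser and then divergence-form Schauder on $e^{\chi_D v}\ad_t=\nabla\cdot(e^{\chi_D v}\nabla\ad)+\text{(bounded)}$, which needs only H\"older continuity of $e^{\chi_D v}$ (hence of $v$, available from the ODE representation) and no preliminary bound on $\nabla v$. The paper instead writes the equation in non-divergence form $\ad_t=\Delta\ad+\chi_D\nabla v\cdot\nabla\ad+\cdots$ and applies parabolic $L^p$ theory, which forces it to first control $\|\nabla v(t)\|_{L^4(\Omega)}$: this is the entire content of Section~\ref{sec:apriori-v}, comprising an energy estimate for $\nabla\ad$ obtained by testing with $e^{\chi_D v}\ad_t$, a Gr\"onwall relation between $\|\nabla v\|_{L^p}$ and $\|\nabla a^{D,S}\|_{L^p(Q_t)}$, and a Gagliardo--Nirenberg argument linking $\|\Delta a\|_{L^2}$ with $\|\nabla a\|_{L^4}$. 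If the divergence-form Schauder step with a H\"older weight on the time derivative can be cited cleanly, your route is shorter; the paper's is longer but rests only on the standard non-divergence $L^p$ theory of Lady\v{z}enskaja--Solonnikov--Ural'ceva.
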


The proof of Theorem \ref{thm:global.existence} is based on a local existence result for strong solutions, Theorem \ref{loc_existence}, a proof that the strong solutions are indeed classical solutions, Theorem \ref{thm:reg_local},
and a series of a-priori estimates, inspired by \cite{tao2011global},
that enable us to extend the local solutions for  large times. We note that the raise of the regularity, which takes place in Lemma \ref{lem:final_a_priori}, could not be achieved by means of energy-type techniques as in \cite{tao2011global}.
We instead base our argumentation on parabolic $L^p$ theory and Sobolev embeddings, using an approach that resembles the strategy employed in \cite{Winkler.2014}.

Comparing this work with \cite{Czochra.2010,tao2011global,Winkler.2014} we note that the model \eqref{eq:reduced_II} features two types of cancer cells.
We treat their corresponding equations separately due to the different motility parameters of the two families, but their non-linear coupling by the EMT necessitates particular treatment.
In comparison to \cite{Stinner.2015} the model we consider in this work assumes that both families of  cancer cells migrate and proliferate and that the EMT takes place only in one direction. Thus, we do not consider \textit{mesenchymal-epithelial transition}. Moreover, we allow for a wide variety of EMT coefficient (functions) that are bounded and Lipschitz continuous \eqref{eq:memt_bound}.

 
The rest of this paper is structured as follows: in Section \ref{sec:lear} we perform a change of variables and prove local existence of strong solutions by a fixed point argument.
In addition, we show that these strong solutions are classical solutions.
Section \ref{sec:apriori} is devoted to a series in of a-priori estimates which continues in Section \ref{sec:apriori-v}.
These estimates allow us to extend the local solutions to global solutions in Section \ref{sec:global}. We conclude with two appendices. Appendix A gathers some facts from parabolic theory and Appendix B contains the proof of a technical lemma.

\section{Local existence of classical solutions regularity}\label{sec:lear}
In this section we show local in time existence of classical solutions.
To this end we reformulate \eqref{eq:reduced_II} using a change of variables.
\subsection{Change of variables}
	Following \cite{tao2011global, Winkler.2014} we perform the change  of variables

	\[ \begin{dcases} \ad &=\cd e^{-\chi_D v} \\  \as & =\cs e^{-\chi_C v} \end{dcases}.
		\]
	Consequently, the system \eqref{eq:reduced_II} recasts as

	\begin{equation}\label{eq:system_a}
	\left\{	\begin{aligned}
		\ad_t &= e^{-\chi_D v} \nabla \cdot \(e^{\chi_D v} \nabla\ad\) + \chi_D \ad vm - \memt \, \ad 
					+ (\mu_D-\chi_D \mu_v\,v)\ad \rdev\\
		\as_t &= e^{-\chi_S v} \nabla \cdot \(e^{\chi_S v} \nabla\as\)  + \chi_S \as vm + \memt \, \ad 
					+ (\mu_S-\chi_S \mu_v\,v)\as \rdev\\
		m_t &= \Delta m +  e^{\chi_S v} \as  +  e^{\chi_D v} \ad  - m,\\
		v_t &= -m v + \mu_v\, v \rdev			
	\end{aligned},\right.
	\end{equation}
	where 
	\begin{equation}\label{eq:pdev}
		\rdev= 1- e^{\chi_S v}\as- e^{\chi_D v} \ad -v
	\end{equation}
	describes the deviation of the \textit{total density} from the equilibrium value 1. 
		
	The system is closed with initial and boundary conditions resulting from \eqref{eq:bcs_c} and \eqref{eq:ics_c}
	\begin{equation}\label{eq:bcs_a}\left\{	\begin{aligned}
	\partial_\nu \ad = \partial_\nu \as  = \partial_\nu m = 0\quad \text{in }\partial \Omega \times (0,T) \\
	\ad(\cdot, 0) = \ad_0,~\as(\cdot, 0) = \as_0,~v(\cdot, 0) = v_0,~m(\cdot, 0) = m_0\quad \text{on }\Omega,\end{aligned} \right.
	\end{equation}
	Analogously, \eqref{eq:iniData_c} implies
	\begin{equation}\label{eq:iniData}
	\ad_0, \as_0, m_0 \geq 0,\quad 0\leq v_0 \leq 1,\quad \ad_0, \as_0, m_0, v_0 \in C^{2+l}({\bar \Omega}).
	\end{equation}

	For the rest of this work we will use the following notation: 
	\begin{equation}
	\left\{
	 \begin{aligned}
	  W^{2,1}_p(Q_{T}) &= \{ u: Q_T \rightarrow \mathbb{R} | u, \nabla u, \nabla^2 u , \partial_t u \in L^p(Q_T)\},\\
	  W^{2}_p(\Omega) &= \{ u: Q_T \rightarrow \mathbb{R} | u, \nabla u, \nabla^2 u , \partial_t u \in L^p(Q_T)\},\\
	  C^{1,1}(\bar Q_{T}) &= \{ u: Q_T \rightarrow \mathbb{R} | u, \nabla u, \partial_t u \in C^0(\bar Q_T)\},\\
	  C^{1,0}(\bar Q_{T})&= \{ u: Q_T \rightarrow \mathbb{R} | u, \nabla u \in C^0(\bar Q_T)\}.
	 \end{aligned}
	 \right.
	\end{equation}
	\subsection{Local existence}
	In this section we establish existence and uniqueness of local (in time) classical solutions of \eqref{eq:system_a}. We begin by showing existence and uniqueness of local (in time) strong solutions.
	\begin{theorem}[Local existence and uniqueness]\label{loc_existence} Let  \eqref{eq:iniData} and  \eqref{eq:smoothDom} be satisfied. Then there exists
		a unique strong solution $(\ad, \as, v, m) \in W^{2,1}_p(Q_{T_0}) \times W^{2,1}_p(Q_{T_0}) \times C^{1,1}(\bar Q_{T_0})\times W^{2,1}_p(Q_{T_0})$
		(for any $p>5$) of system \eqref{eq:system_a}, \eqref{eq:bcs_a} for a final time $T_0 >0$ depending on
		$$ M = 3 \|\ad_0 \|_{C^2} + 3 \|\as_0 \|_{C^2} + 9 \|v_0 \|_{C^1} + \|m_0 \|_{C^2}+ 3.$$ Moreover,
		$$ \ad, \as, m \geq 0,\quad 0\leq v \leq 1.$$
	\begin{proof} 
		
	We will prove the local existence by Banach's fixed point theorem
	\paragraph{Spaces.} Let $X$ be the Banach space of functions $(\ad, \as, v)$ with finite norm
	$$ \|(\ad, \as, v)\|_X =  \| \ad\|_{C^{1.0}(\bar Q_T)} + \| \as\|_{C^{1.0}(\bar Q_T)} + \| v\|_{C^{1.0}(\bar Q_T)},\quad 0<T<1$$
	and 
	$$ X_M := \left\{ (\ad, \as, v)\in (\coz)^3: \ad, \as, v \text{ satisfy \eqref{eq:bcs_a}, and } \|(\ad, \as, v)\|_X \leq M \right\}.$$
	
	\paragraph{Fixed point.} For any $(\ad, \as, v)\in X_M$ we define $(\ad_*, \as_*, v_*) = F(\ad, \as, v)$ given such that 
	\begin{subequations}
	\begin{align}
	\label{eq:m_linpar1} m_t - \Delta m + m = \ad e^{\chi_D v}+ \as e^{\chi_S v}\quad \text{ in }Q_T,\\
	\label{eq:m_linpar2} \partial_\nu m = 0 \text{ in }\partial \Omega \times (0,T),\quad m(\cdot, 0) = m_0\text{ in } \Omega ,\\
	\label{eq:v_lin1}{v_*}_t = - m v_* + \mu_v v_* \rdev\quad \text{ in }Q_T, \\
	\label{eq:v_lin2}v_*(\cdot, 0) = v_0, \\
	\label{eq:ad_lin1}{\ad_*}_t - \Delta \ad_* - \chi_D \nabla v_* \cdot \nabla \ad_* + [\mu_\text{EMT}-(\mu_D - \chi_D \mu_v v) \rdev] \ad_* = \chi_D \ad v m,\\
	\label{eq:ad_lin2}\partial_\nu \ad_* = 0\text{ in }\partial \Omega \times (0,T),\quad \ad_*(\cdot, 0) = \ad_0\text{ in } \Omega, \\
	\label{eq:as_lin1}	{\as_*}_t - \Delta \as_* - \chi_S \nabla v_* \cdot \nabla \as_* - (\mu_S - \chi_S \mu_v v) \rdev \as_* = \chi_S \as v m + \mu_\text{EMT} \ad_*,\\
	\label{eq:as_lin2}	\partial_\nu \as_* = 0\text{ in }\partial \Omega \times (0,T),\quad \as_*(\cdot, 0) = \ad_0\text{ in } \Omega,
	\end{align}
	\end{subequations}
	where  $\rdev$ is given by \eqref{eq:pdev}. 
	 For the proof we fix some (arbitrary) $p>5$ and set $\lambda = 1- \frac{p}{5}$.
	\paragraph{$F$ is well defined and $F(X_M) \subset X_M$.} 
	We start with the component $m$ and consider the equations \eqref{eq:m_linpar1}-\eqref{eq:m_linpar2}.
	Since $0<T<1$ and $(\ad, \as, v)\in X_M$ this linear parabolic problem has a unique solution by Theorem \ref{thm:parabolic_lp}:
	\begin{equation}
		\|m\|_\sobto \leq \C(M) \label{eq:m_comp}.
	\end{equation}
	Here we can apply the Sobolev embedding Theorem \ref{thm:sob_embed_hoelder} and get
	\begin{equation}
		\|m \|_{C^{1,0}(\bar Q_T)} \leq \C(M). \label{eq:m_in_c01}
	\end{equation}
	Moreover, the parabolic comparison principle yields
	\begin{equation}
		m \geq 0. \label{eq:m_positive}
	\end{equation}
	The initial value problem \eqref{eq:v_lin1}, \eqref{eq:v_lin2} can be written as
	\begin{equation}
	v_{*t} = \Cl[h]{oh1} v_*, \quad v_*(\cdot, 0) = v_0, \label{eq:vODEsimple}
	\end{equation}
	where 
	\begin{equation}
		\| \Cr{oh1}\|_\coz = \| -m + \mu_v \rdev\|_\coz \leq \Cl{oc1}(M) \label{eq:vODEestder}
	\end{equation}
	due to \eqref{eq:m_in_c01} and $(\as, \ad, v) \in X_M$. The ODE system has the solution
	\begin{equation}
		v_* = v_0(x) \exp\(\int_{0}^{t} \Cr{oh1}(x,s) ds\) \geq 0 \label{eq:vstar_solution}
	\end{equation}
	with gradient
	 	\begin{equation}
	 	\nabla v_* = \nabla v_0(x) \exp\(\int_{0}^{t} \Cr{oh1}(x,s) ds\) + v_0(x) \exp\(\int_{0}^{t} \Cr{oh1}(x,s) ds\)
	 	\int_{0}^{t} \nabla \Cr{oh1}(x,s) ds.
	 	\end{equation}
	 	For $T\leq \frac{1}{2 \Cr{oc1}(M)}< \log(2)/\Cr{oc1}(M)$ we get
	 	\begin{align}
	 		\| v_* \|_{C(\bar Q_T)} &\leq \| v_0\|_{C(\bar \Omega)} e^{\Cr{oc1}(M)T} \leq 2  \| v_0\|_{C(\bar \Omega)} \\
	 	\nonumber		\| \nabla v_* \|_{C(\bar Q_T)} &\leq \| \nabla v_0(x)\|_{C(\bar \Omega)} \exp(\Cr{oc1}(M)T) + \|v_0(x)\|_{C(\bar \Omega)} \exp(\Cr{oc1}(M)T) T\Cr{oc1}(M)  \\
	 	&\leq 2	\| \nabla v_{0} \|_{C(\bar \Omega)} + \| v_{0} \|_{C(\bar \Omega)}	 	
	 	\end{align}
	 	and thus
	 	\begin{equation}
	 		\| v_*\|_{C^{1,0}(\bar Q_T)} = \| v_* \|_{C(\bar Q_T)} + \| \nabla v_* \|_{C(\bar Q_T)}
	 		\leq 3  \| v_{0} \|_{C(\bar \Omega)}	+ 2 \| \nabla v_{0} \|_{C(\bar \Omega)} \leq 3  \| v_{0} \|_{C^1(\bar \Omega)} \leq M/3. \label{eq:vbound}
	 	\end{equation}

	Next, we deal with the parabolic problem \eqref{eq:ad_lin1}, \eqref{eq:ad_lin2} that can be written as
	\begin{equation}
		a_{*t}-\Delta a_* - \chi \nabla v_* \cdot \nabla a_* - \Cl[h]{h2}a_* =\Cl[h]{h3}  \label{eq:parabol_lin_a}
	\end{equation}
	with boundary and initial conditions given by \eqref{eq:ad_lin2} where $a_* = \ad_*$, $\chi=\chi_D$. We have
	\begin{equation} \label{eq:apde_coeffs}
		\| \nabla v_* \|_{L^\infty(Q_T)} \leq M, \quad \| \Cr{h2} \|_{L^\infty(Q_T)} \leq \C (M), \quad \| \Cr{h3} \|_{L^\infty(Q_T)} \leq \C (M),
	\end{equation}
	because of $(\ad, \as, v)\in X_M$, \eqref{eq:m_in_c01}, \eqref{eq:memt_bound}. Applying the maximal parabolic regularity result (Theorem \ref{thm:parabolic_lp}), there is a unique solution $a_*$ that satisfies
	\begin{equation}
		\|a_* \|_{W^{2,1}_p(\bar Q_T)} \leq \C(M)\quad \forall p>1. \label{eq:a_in_sob}
	\end{equation}  
	Further the Sobolev embedding \ref{thm:sob_embed_hoelder}: $W_p^{2,1}(\bar Q_T) \hookrightarrow C^{1+\lambda, (1+\lambda)/2}(\bar Q_T)$ gives us
	\begin{equation}
		\|a_* \|_{C^{1+\lambda, (1+\lambda)/2}(\bar Q_T)} \leq \Cl{c5}(M).
	\end{equation}
	If $T\leq \Cr{c5}(M)^\frac{-2}{1 + \lambda}$ we get
	\begin{align}
	\nonumber	\|a_* \|_{C^{1,0}} & = \|a_* \|_{C^0(\bar Q_T)} + \|\nabla a_* \|_{C^0(\bar Q_T)} \\
	\nonumber	&\leq \|a_* - a_0 \|_{C^0(\bar Q_T)} + \| a_0 \|_{C^0(\bar \Omega)} + \|\nabla a_* - \nabla a_0\|_{C^0(\bar Q_T)}
			+ \| \nabla a_0 \|_{C^0(\bar \Omega)}	 \\
	\nonumber	&\leq T^{(1+\lambda)/2} \|a_* \|_{C^{1, (1+\lambda)/2}(\bar Q_T)} + \| a_0 \|_{C^1(\bar \Omega)} \\
	\nonumber	&\leq T^{(1+\lambda)/2} \Cr{c5}(M) + \| a_0 \|_{C^1(\bar \Omega)} \\
	\nonumber & \leq 1 +  \| a_0 \|_{C^1(\bar \Omega)}\\
			&\leq M/3. \label{eq:abound}
	\end{align}
	Moreover, 
	\begin{equation}
		a_* \geq 0 \label{eq:a_positive}
	\end{equation} by the parabolic comparison principle since the right hand side of \eqref{eq:ad_lin1} is non negative.
	Since we have shown that $\ad_* \in X_M $, the assertion \eqref{eq:apde_coeffs} is true also for $a= \as$ in the problem \eqref{eq:parabol_lin_a}. Hence \eqref{eq:abound}, \eqref{eq:a_positive} for $a_*= \as_*$ follow by the same arguments.
	
	\paragraph{$F$ is a contraction.}	
	We take $(\ad_1, \as_1, v_1), (\ad_2, \as_2, v_2) \in X_M$ and consider $ (\ados, \asos, \vos) = F(\ad_1, \as_1, v_1),$ $(\adts, \asts, \vts) = F(\ad_2, \as_2, v_2)$. As shown before one can find 
	$$m_1, m_2, \quad \| m_1\|_{C^{1,0}(\bar Q_T)} , \| m_2\|_{C^{1,0}(\bar Q_T)} \leq \C(M) $$
	that satisfy \eqref{eq:m_linpar1}, \eqref{eq:m_linpar2} for $(\ad, \as, m, v) = (\ad_1, \as_1, m_1, v_1), (\ad_2, \as_2, m_2, v_2)$. Further we have 
	\begin{align}
 (m_1 - m_2)_t - \Delta (m_1 - m_2) + (m_1 - m_2) = \ad_1 e^{\chi_D v_1}+ \as_1 e^{\chi_S v_1}
 - \ad_2 e^{\chi_D v_2}- \as_2 e^{\chi_S v_2}\quad \text{ in }Q_T,\label{eq:mdiff1}\\
\partial_\nu (m_1 - m_2) = 0 \text{ in }\partial \Omega \times (0,T),\quad (m_1-m_2)(\cdot, 0) = 0 \text{ in } \Omega, \label{eq:mdiff2}
	\end{align}
	where
	\begin{align}
	\| \ad_1 e^{\chi_D v_1}+ \as_1 e^{\chi_S v_1}
	&- \ad_2 e^{\chi_D v_2}- \as_2 e^{\chi_S v_2}\|_\linf \nonumber \\ &\leq \|  e^{\chi_D v_1} (\ad_1 - \ad_2)\|_\linf + \|  e^{\chi_S v_1} (\as_1 - \as_2)\|_\linf \nonumber\\
	&+ \| (e^{\chi_D v_1} - e^{\chi_D v_2}) (\ad_2) \|_\linf + \| (e^{\chi_S v_1} - e^{\chi_S v_2}) (\as_2) \|_\linf \nonumber\\
	&\leq \C(M) (\|\ad_1 - \ad_2 \|_\linf + \|\as_1 - \as_2 \|_\linf + \| v_1 - v_2\|_\linf).
	\end{align}
	Hence by Theorem \ref{thm:parabolic_lp} there is a solution to \eqref{eq:mdiff1},\eqref{eq:mdiff2} satisfying
	$$\| m_1 - m_2\|_\sobto \leq \C(M) (\|\ad_1 - \ad_2 \|_\linf + \|\as_1 - \as_2 \|_\linf + \| v_1 - v_2\|_\linf)$$
	for all $p>1$. The Sobolev embedding \ref{thm:sob_embed_hoelder} once again yields
	\begin{equation}\label{eq:mdiffbound}
	\| m_1 - m_2\|_\coz \leq \C(M) (\|\ad_1 - \ad_2 \|_\linf + \|\as_1 - \as_2 \|_\linf + \| v_1 - v_2\|_\linf).
	\end{equation}
	
	We get from \eqref{eq:v_lin1}, \eqref{eq:v_lin2} that 
	\begin{equation}\label{eq:vdiffode}
		(\vos - \vts )_t = \Cl[h]{h4} (\vos - \vts) + \Cl[h]{h5} , \quad (\vos - \vts)(\cdot, 0) = 0,
	\end{equation}
	where
	$$ \Cr{h4} = -m_1 + \mu_v \rho_{\text{dev,} 1}, \quad \Cr{h5} = (m_2-m_1)\vts + \mu_v \vts (\rho_{\text{dev,} 2} - \rho_{\text{dev,} 1}).$$
	There we have used the notation
	$$\vfreeo = 1- e^{\chi_S v_1}\as_1- e^{\chi_D v_1} \ad_1 -v_1, \quad \vfreet = 1- e^{\chi_S v_2}\as_2- e^{\chi_D v_2} \ad_2 -v_2. $$
	Since $(\ad_i, \as_i, v_i)\in X_M, ~i=1,2$ and due to \eqref{eq:mdiffbound}, we get
	\begin{align}
		\| \Cr{h4}\|_\coz &\leq \C(M), \label{eq:h4bound}\\
		\| \Cr{h5}\|_\coz &\leq \C(M) (\|\ad_1 - \ad_2 \|_\coz + \|\as_1 - \as_2 \|_\coz + \| v_1 - v_2\|_\coz). \label{eq:h5bound}
	\end{align}
	The solution of the ODE \eqref{eq:vdiffode} is given by
	\begin{equation}
			\vos - \vts = \int_{0}^{t} \exp\(\int_{0}^{t} \Cr{h4}(x,s) ds\) h_5(x, \tau) d\tau,
	\end{equation}
	and thus
		\begin{equation}
		\nabla(\vos - \vts) = \int_{0}^{t} \exp\(\int_{0}^{t} \Cr{h4}(x,s) ds\) \nabla_x \Cr{h5}(x, \tau) d\tau
		+ \int_{0}^{t} \exp\(\int_{0}^{t} \Cr{h4}(x,s) ds\) \Cr{h5}(x, \tau) \int_{0}^{t} \nabla_x \Cr{h4}(x,s) ds\, d\tau.
		\end{equation} Finally we obtain by using $0<T<1$ and the bounds \eqref{eq:h4bound}, \eqref{eq:h5bound} that
		\begin{align}
		\nonumber	\| \vos - \vts\|_\coz &\leq T \C(M) \| \Cr{h5}\|_\coz \\ &\leq T \Cl{c12} (M)(\|\ad_1 - \ad_2 \|_\coz + \|\as_1 - \as_2 \|_\coz + \| v_1 - v_2\|_\coz). \label{eq:vdiffbound}
		\end{align}
		
		Next, we derive the parabolic problem for $a \in \{\ad, \as\}$ with  coefficients $( \chi, \Cl[h]{h6}, \Cl[h]{h7})\in\{(\chi_D,\Cr{h6}^D, \Cr{h7}^D), (\chi_S,\Cr{h6}^S, \Cr{h7}^S)\}$
		by \eqref{eq:ad_lin1}--\eqref{eq:as_lin2}
		\begin{align}
			(\aos - \ats)_t - \Delta (\aos - \ats) - \chi \nabla \vos \cdot \nabla(\aos - \ats) + \Cr{h6} (\aos - \ats) =  \Cr{h7} \text{ in }Q_T, \label{eq:adiff1}\\
			\partial_\nu (m_1 - m_2) = 0 \text{ in }\partial \Omega,\quad(\aos - \ats)(\cdot, 0) = 0 \text{ on } \Omega, \label{eq:adiff2}
		\end{align}
		where
		\begin{align*}
			\Cr{h6}^D &= \memto - (\mu_D - \chi_D \mu_v v_1) \vfreeo, \quad \Cr{h6}^S = - (\mu_S - \chi_S \mu_v v_1)\vfreeo, \\
			\Cr{h7}^D &= \chi_D (a_1^D m_1 v_1 - a_2^D m_2 v_2) + \chi_D \nabla (\vos - \vts) \cdot \nabla \adts \\ &+
			\adts [(\mu_D - \chi_D \mu_v v_1) \vfreeo - (\mu_D - \chi_D \mu_v v_2) \vfreet- (\memto - \memtt )], \\
			\Cr{h7}^S &= \chi_S (a_1^S m_1 v_1 - a_2^S m_2 v_2) + \chi_S \nabla (\vos - \vts) \cdot \nabla \asts \\ &+
			\asts [(\mu_S - \chi_S \mu_v v_1) \vfreeo - (\mu_S - \chi_S \mu_v v_2) \vfreet] - (\memto \, \ados- \memtt \, \adts ).
		\end{align*}
		We have used the notation
		$$\memto = \memt(\cd_1, \cs_1, v_1, m_1), \quad \memtt = \memt(\cd_2, \cs_2, v_2, m_2). $$
		Due to $(\ad_i, \as_i, v_i)\in X_M$, \eqref{eq:vbound}, \eqref{eq:abound}, \eqref{eq:mdiffbound}, \eqref{eq:vdiffbound}, \eqref{eq:memt_lipschitz}, \eqref{eq:memt_lipschitz2} we can estimate
		\begin{align}
			\| \chi_D \nabla \vos \|_\linf , \| \chi_S \nabla \vos \|_\linf &\leq \C(M) \\
			\| h_6^D\|_\linf,  	\| h_6^S\|_\linf &\leq \C(M), \\
		\nonumber	\| h_7^D\|_\linf,  	\| h_7^S\|_\linf &\leq \C(M)(\|\ad_1 - \ad_2 \|_\coz &&+ \|\as_1 - \as_2 \|_\coz \\ & &&+ \| v_1 - v_2\|_\coz).
		\end{align}

		Since $0<T<1$ a solution of \eqref{eq:adiff1}, \eqref{eq:adiff2} exists by Theorem \ref{thm:parabolic_lp} with
		\begin{align*}
			\|\aos - \ats \|_\sobto &\leq \C(M) \|h_7\|_{L^p(Q_T)} \\
			& \leq \C(M)(\|\ad_1 - \ad_2 \|_\coz + \|\as_1 - \as_2 \|_\coz \\ & + \| v_1 - v_2\|_\coz),
		\end{align*} 
		hence the bound can be extended using the Sobolev embedding \ref{thm:sob_embed_hoelder} and we get
	\begin{equation}
		\|\aos - \ats \|_\hoeloh{\lambda} \leq \Cl{c18}(M)(\|\ad_1 - \ad_2 \|_\coz + \|\as_1 - \as_2 \|_\coz + \| v_1 - v_2\|_\coz).
	\end{equation}
	Then,
	\begin{align}
		\| \aos - \ats \|_\coz &= \| (\aos - \ats)(x, t) - (\aos - \ats)(x, 0) \|_\coz \nonumber\\
	&\leq T^{(1+\lambda)/2}\| \aos - \ats \|_{C^{1,(1+\lambda/2)}(\bar Q_T)} \nonumber\\
	&\leq T^{(1+\lambda)/2} \Cr{c18}(M)\(\|\ad_1 - \ad_2 \|_\coz + \|\as_1 - \as_2 \|_\coz + \| v_1 - v_2\|_\coz\). \label{eq:adiffbound}
	\end{align}
	If we take $T_0 = T$ such that
	$$ \max\{T \Cr{c12}(M),~ T^{(1+\lambda)/2} \Cr{c18}(M)\} < \frac{1}{3} $$
	we see by \eqref{eq:vdiffbound} and \eqref{eq:adiffbound} that $F$ is a contraction in $X_M$.	
	\paragraph{Conclusion and regularity.}
			According to the Banach fixed-point theorem $F$ has a unique fixed point $(\ad, \as, v)$, which together with $m$ from \eqref{eq:m_comp} is the unique solution of \eqref{eq:system_a}, \eqref{eq:bcs_a}. By \eqref{eq:m_comp} and \eqref{eq:a_in_sob} we have that
			$$m, \ad, \as \in \sobto.$$
			Due to \eqref{eq:vODEsimple}, \eqref{eq:vODEestder}, and \eqref{eq:vbound} we get
			$$ v \in C^{1,1}(\bar Q_T).$$
			By \eqref{eq:a_positive}, \eqref{eq:vstar_solution}, and \eqref{eq:m_positive}
			we get the non-negativity
			$$ \ad, \as, v, m \geq 0.$$
			
			Moreover we note that due to the non negativity of $v$, $0\leq v_0\leq 1$, and
			$$(1-v)_t \geq - \mu_v v (1-v), \quad (1-v)(\cdot, 0)\geq 0 $$
			$(1-v)$ can not become negative and hence $v \leq 1$.			
	\end{proof}
	\end{theorem}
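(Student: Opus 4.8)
The natural strategy is a Banach fixed-point argument on a short time interval. The essential structural observation is that, for \emph{given} coefficient data $(\ad,\as,v)$, the system \eqref{eq:system_a} decouples into a chain of problems that can be solved one after another: a linear parabolic equation for $m$ (whose right-hand side depends only on $\ad,\as,v$), a first-order ODE in $t$ for $v_*$ (with the freshly computed $m$ as a coefficient), and then two linear parabolic equations for $\ad_*$ and $\as_*$ in which the haptotaxis drift, the reaction coefficients and the EMT source are frozen using $v_*$, $m$ and the input. Denoting by $F$ the map $(\ad,\as,v)\mapsto(\ad_*,\as_*,v_*)$ so obtained, a fixed point of $F$ (together with the associated $m$) is exactly a solution of \eqref{eq:system_a}, \eqref{eq:bcs_a}. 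I would set up $F$ on the closed ball $X_M$ of radius $M$ in the space $X$ carrying the $\coz$ norm of each of $\ad,\as,v$, the $C^{1,0}$ regularity of $v$ being forced by the advection term $\chi\nabla v_*\cdot\nabla a_*$, which needs $\nabla v_*\in L^\infty$ to remain a benign lower-order term.

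For the \emph{self-mapping} property $F(X_M)\subset X_M$, I would solve the $m$-equation by maximal parabolic $L^p$ regularity (Theorem \ref{thm:parabolic_lp}), obtaining a bound in $\sobto$ and, via the Sobolev embedding (Theorem \ref{thm:sob_embed_hoelder}), in $\coz$, with a constant depending only on $M$. The ODE for $v_*$ can be integrated explicitly, $v_*=v_0\exp(\int_0^t(\cdots))$, so that both $v_*$ and $\nabla v_*$ are controlled by $v_0$ times an exponential that is as close to $1$ as we wish by shrinking $T$; this keeps $\|v_*\|_\coz\le M/3$. The two $a$-equations are again handled by Theorem \ref{thm:parabolic_lp}, giving $\sobto$ and then $\hoeloh{\lambda}$ bounds; here the point is that one controls $\|a_*\|_\coz$ by $\|a_0\|_{C^1}+\|a_*-a_0\|_\coz$, and the increment $a_*-a_0$ carries a factor $T^{(1+\lambda)/2}$ coming from the time-Hölder seminorm, so choosing $T$ small forces $\|a_*\|_\coz\le M/3$ as well. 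Nonnegativity of $m,\ad_*,\as_*$ follows from the parabolic comparison principle (the right-hand sides are nonnegative once the inputs are), and $v_*\ge 0$ is immediate from the exponential representation.

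For the \emph{contraction} property I would take two inputs in $X_M$, form the differences of the corresponding outputs, and note that each difference solves a linear problem of the same type but with \emph{zero} initial data and a right-hand side built from differences of the nonlinear terms. Those right-hand sides are estimated using the $X_M$ bounds together with the Lipschitz continuity of the exponentials and of $\memt$ (namely \eqref{eq:memt_lipschitz}, \eqref{eq:memt_lipschitz2}); the crucial feature is that the $m$-difference and the $v$-difference estimates each lose a power of $T$ (the $v$-difference because its source integrates against an $O(1)$ kernel over $[0,t]$, the $a$-difference through the same $T^{(1+\lambda)/2}$ time-Hölder gain as above). Collecting the estimates, the Lipschitz constant of $F$ on $X_M$ is bounded by $\max\{T\,C(M),\,T^{(1+\lambda)/2}C(M)\}$, which is $<1/3$ for $T$ small, so $F$ is a contraction. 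Banach's theorem then yields the unique fixed point; the regularity claims $(\ad,\as,m)\in\sobto$ and $v\in\coo$ follow from the subproblem bounds, and $v\le 1$ is obtained by comparison applied to $(1-v)$.

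The main obstacle, and the reason the two cancer populations cannot simply be treated as one, is the interaction of the haptotaxis drift with the pure-ODE character of $v$: since $v$ has no diffusive smoothing, its spatial gradient is only as regular as the coefficients of its ODE, so one must carry $v$ at the $C^{1,0}$ level throughout and verify that the transport coefficient $\nabla v_*$ stays uniformly bounded. Layered on top of this is the one-directional EMT coupling $\memt\,\ad$, which makes the $\as$-problem depend on the already-constructed $\ad_*$; the bookkeeping of the $M$-dependent constants through this cascade, and in particular controlling the EMT differences via \eqref{eq:memt_lipschitz2}, is where the argument is most delicate.
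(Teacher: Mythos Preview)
Your proposal is correct and follows essentially the same approach as the paper: a Banach fixed-point argument in the $C^{1,0}$ ball $X_M$, with the map $F$ defined by solving the $m$-equation via parabolic $L^p$ theory, then the $v_*$-ODE explicitly, then the two $a_*$-equations again by parabolic $L^p$ theory, and obtaining both the self-mapping and the contraction through the time-H\"older gain $T^{(1+\lambda)/2}$ for the $a$-components and a factor $T$ for the $v$-component. One minor inaccuracy: the $m$-difference itself does \emph{not} carry a power of $T$ in the paper (it is merely bounded by the input differences via parabolic regularity and embedding); the smallness comes only from the $v$- and $a$-differences, which is enough since $m$ is not part of the fixed-point variable.
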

	
Our next result shows that the strong solutions which we constructed in Theorem \eqref{loc_existence} are indeed classical solutions.
	\begin{theorem}[Regularity]\label{thm:reg_local}
		Under the initial and boundary conditions \eqref{eq:bcs_a} and \eqref{eq:iniData} the solution in Theorem \ref{loc_existence} satisfies 
		\begin{equation}
			(\as, \ad, v, m) \in (C^{2+l,1+l/2}(\bar Q_{T_0}))^4,
		\end{equation}
		for $0<l<1$.
		\begin{proof}			
			\newcommand{\ddxi}{\partial_{x_i} }
			\newcommand{\ddxidxj}{\partial^2_{x_i, x_j}  }
			We use Theorem \ref{loc_existence} and the Sobolev embedding \ref{thm:sob_embed_hoelder}. Then we obtain for a sufficiently large $p>5$, that
			\begin{equation}
				\ad, \as, m \in \hoelohz{l}. \label{eq:am_in_sob}
			\end{equation}
			We further derive from \eqref{eq:system_a} that
			\begin{equation}
				(\ddxi v)_t = h_1 \ddxi v -h_2, \label{eq:dvdxiODE}
			\end{equation}
			where 
			\begin{align}
				h_1 &= -m + \mu_v \vfree - \mu_v v(1 + \chi_S e^{\chi_S v} a^S + \chi_D e^{\chi_D v} a^D ), \label{eq:help1dvdxi}\\
				h_2 &= v \ddxi m + \mu_v v (e^{\chi_S v} \ddxi a^S + e^{\chi_D v} \ddxi a^D). \label{eq:help2dvdxi}
			\end{align}
			Because of \eqref{eq:am_in_sob} and $v\in \coo$ we get
			\begin{equation}
				h_1, h_2 \in \hoelsz{l}{l/2}. \label{eq:helper_in_hoel}
			\end{equation} 
			The solution of \eqref{eq:dvdxiODE} is given by
			\begin{equation} \label{eq:dvdxi_sol}
			\ddxi v = \ddxi v_0(x) e^{\int_{0}^{t}h_1(x,s)\, ds} - \int_{0}^{t} h_2(x, \tau) e^{\int_{0}^{\tau}h_1(x,s)\, ds}\, d\tau,
			\end{equation}
			and hence by \eqref{eq:helper_in_hoel}
			\begin{equation}
				\ddxi v \in \hoelsz{l}{l/2}. \label{eq:dvdxi_in_hoel}
			\end{equation}
			The equation for $\ad$ in \eqref{eq:system_a} can be written as
			\begin{equation}
			\ad_t - \Delta \ad - \chi_D \nabla v \cdot \nabla \ad - h_3 \ad = h_4, \label{eq:ad_pde}
			\end{equation}
			where 
			\begin{align}
				h_3 &=  (\mu_D - \chi_D \mu_v v) \vfree \in \hoelsz{l}{l/2}\\
				h_4 &= \chi_D \ad v m - \memt a^D \in \hoelsz{l}{l/2}
			\end{align}
			by \eqref{eq:am_in_sob}, \eqref{eq:dvdxi_in_hoel}, and \eqref{eq:memt_lipschitz}. Thus, we can apply Theorem \ref{thm:parabolic_hoel} and get together with \eqref{eq:dvdxi_in_hoel} that the solution of \eqref{eq:ad_pde} satisfies
			\begin{equation}
				\ad \in \hoelsz{2 + l}{1 + l/2}. \label{eq:ad_high_reg}
			\end{equation}
			Similarly, the equation for $\as$ in \eqref{eq:system_a} can be rewritten as
			\begin{align}
			\as_t - \Delta \as - \chi_S \nabla v \cdot \nabla \as - h_5 \as = h_6 \label{eq:as_pde},\\
			h_5 = (\mu_S - \chi_S\mu_v v) \vfree \in \hoelsz{l}{l/2}, \\
			h_6 = \chi_S \as v m + \memt \ad \in \hoelsz{l}{l/2}.
			\end{align}
			Applying Theorem \ref{thm:parabolic_hoel} we obtain 
						\begin{equation}
						\as \in \hoelsz{2 + l}{1 + l/2}. \label{eq:as_high_reg}
						\end{equation}
			Furthermore, \eqref{eq:am_in_sob}, $v\in \hoels{1}{1}$, \eqref{eq:m_linpar1}, and \eqref{eq:m_linpar2} yield
			\begin{equation}
			m \in \hoelsz{2 + l}{1 + l/2}. \label{eq:m_high_reg}
			\end{equation}
			By using \eqref{eq:dvdxi_in_hoel} together with \eqref{eq:ad_high_reg}, \eqref{eq:as_high_reg}, and \eqref{eq:m_high_reg} and repeating the proof of \eqref{eq:dvdxi_in_hoel} for $\ddxidxj v$, we get
			\begin{equation}
				\ddxidxj v \in \hoelsz{l}{l/2}.  \label{eq:dvdxidxj_in_hoel}
			\end{equation}
			The equation for $v$ in \eqref{eq:system_a} provides further that
			$$ v_t = - mv + \mu_v v \vfree \in \hoelsz{2+l}{l/2},$$
			which yields together with $v\in \hoelsz{1}{1}$ and \eqref{eq:dvdxidxj_in_hoel} that
			$$ v \in \hoelsz{2+l}{1 + l/2}.$$
 		\end{proof}
	\end{theorem}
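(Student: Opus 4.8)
The plan is to bootstrap the Sobolev regularity of the strong solution furnished by Theorem~\ref{loc_existence} up to the Hölder class $C^{2+l,1+l/2}(\bar Q_{T_0})$ through a sequence of parabolic Schauder estimates for the diffusive components and explicit ODE integration for $v$. The starting point is the Sobolev embedding Theorem~\ref{thm:sob_embed_hoelder}: since $\ad,\as,m\in W^{2,1}_p(Q_{T_0})$ for every $p>5$, taking $p$ large enough yields $\ad,\as,m\in C^{1+l,(1+l)/2}(\bar Q_{T_0})$ for the prescribed $l\in(0,1)$, so in particular their first spatial derivatives are already Hölder continuous. The structural difficulty is that the fourth equation in \eqref{eq:system_a} carries no diffusion in $v$; it is merely an ODE in time at each point $x$, so the spatial regularity of $v$ cannot come from a smoothing estimate and must be recovered by differentiating the equation and tracking the regularity of the coefficients.

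Accordingly, I would first raise the spatial regularity of $v$ by differentiating its equation with respect to $x_i$. This produces a linear scalar ODE $(\partial_{x_i}v)_t = a\,\partial_{x_i}v + b$ whose coefficients $a,b$ are assembled from $m,v,\as,\ad$ together with their first spatial derivatives. By the embedding just noted and the available regularity $v\in\coo$, these coefficients lie in $\hoelsz{l}{l/2}$. Writing the solution by variation of constants and estimating the resulting integral then gives $\partial_{x_i}v\in\hoelsz{l}{l/2}$, hence $\nabla v\in\hoelsz{l}{l/2}$ and $v\in C^{1+l}$ in space.

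With $\nabla v\in\hoelsz{l}{l/2}$ in hand, I would treat the $\ad$- and $\as$-equations of \eqref{eq:system_a} as linear parabolic equations with Hölder drift $\chi\nabla v$, Hölder zeroth-order coefficient, and Hölder right-hand side; the latter two are controlled using the improved regularity of $v$ and the Lipschitz hypotheses \eqref{eq:memt_bound}--\eqref{eq:memt_lipschitz} on $\memt$, so all belong to $\hoelsz{l}{l/2}$. Parabolic Schauder theory (Theorem~\ref{thm:parabolic_hoel}) then upgrades $\ad$ and $\as$ to $\hoelsz{2+l}{1+l/2}$. Applying the same theory to the heat-type equation for $m$, whose source $e^{\chi_S v}\as+e^{\chi_D v}\ad$ is now a Hölder function, yields $m\in\hoelsz{2+l}{1+l/2}$ as well.

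Finally, I would repeat the ODE argument one level higher, differentiating the $v$-equation twice in space: since $\ad,\as,m$ are now $\hoelsz{2+l}{1+l/2}$, the coefficients of the ODE for $\partial^2_{x_ix_j}v$ again lie in $\hoelsz{l}{l/2}$, giving $\nabla^2 v\in\hoelsz{l}{l/2}$; reading $v_t$ directly off the equation then supplies the missing time regularity, so that $v\in\hoelsz{2+l}{1+l/2}$. The main obstacle I anticipate is the correct ordering of this bootstrap: because $v$ feeds back into the $\ad,\as,m$ equations through $\rdev$ in \eqref{eq:pdev} while simultaneously inheriting its own regularity from those components, one must lift $v$ to $C^{1+l}$ in space \emph{before} the Schauder step for $\ad,\as$, and only afterwards close the loop to obtain the full second-order Hölder regularity of $v$.
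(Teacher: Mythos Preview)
Your proposal is correct and follows essentially the same route as the paper: Sobolev embedding to put $\ad,\as,m$ in $C^{1+l,(1+l)/2}$, differentiation of the $v$-ODE in $x_i$ and variation of constants to obtain $\nabla v\in C^{l,l/2}$, then parabolic Schauder theory (Theorem~\ref{thm:parabolic_hoel}) for $\ad,\as,m$, and finally a second pass through the ODE argument for $\nabla^2 v$ together with the equation for $v_t$. Your remark about the ordering of the bootstrap is exactly the point the paper's proof hinges on.
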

	\begin{remark} Let us note that the local existence of classical solutions that follow from the Theorems \ref{loc_existence} and \ref{thm:reg_local} is valid also for more than two space dimensions.
\end{remark}


\section{A-priori estimates for $\| \ad(\cdot,t)\|_\linfo, \| \as(\cdot,t)\|_\linfo$}\label{sec:apriori}
To extend the local (in time) solutions whose existence we have established in the last section to global (in time) solutions we need some {\it a priori} estimates.
Establishing those estimates is the purpose of this section. Let $(\ad,\as,v,m)\in \(\mathcal C^{2,1} (Q_T)\)^4$ be a classical solution of  \eqref{eq:system_a} in $[0,T]$ for any $T>0$. In what follows we will show the corresponding a priori estimates. 
We begin by proving $\| \cdot \|_{L^1(\Omega)}$ bounds for $\ad$, $\as$ and $m$ uniformly in time.
\begin{lem} \label{thm:l1_bounds}
Let $(\ad,\as,v,m)\in \(\mathcal C^{2,1} (Q_T)\)^4$ be a solution of  \eqref{eq:system_a}, then we have for all $t\in (0,T)$,
\begin{subequations}
	\begin{align}
		\| \ad(\cdot,t) \|_{L^1(\Omega)}\ \leq\ \|\cd(\cdot,t) \|_{L^1(\Omega)}\ \leq\  \max\left\{\|\cd_0\|_{L^1(\Omega)}, |\Omega|\right\} \label{lemma.1}\\
		\| \as(\cdot,t) \|_{L^1(\Omega)}\ \leq\  \|\cs(\cdot,t) \|_{L^1(\Omega)}\ \leq\ \max\left\{\|\cs_0\|_{L^1(\Omega)}, {\cs_{\max}}  \right\} \label{lemma.2} \\
		\|m(\cdot,t)\|_{L^1(\Omega)}  \ \leq \max \left\{ \|m_0\|_{L^1(\Omega)}, \max\left\{\|\cd_0\|_{L^1(\Omega)},|\Omega|\right\} + \max\left\{\|\cs_0\|_{L^1(\Omega)}, \cs_{\max}  \right\}  \right\}\label{lemma.3} 
	\end{align}
	with 
	\[ \cs_{\max} := \frac{|\Omega|}{2}\( 1 + \sqrt{1 + 4\frac{\mu_M}{\mu_S|\Omega|} \max\left\{\|\cd_0\|_{L^1(\Omega)},|\Omega|\right\} } \) .\]
\end{subequations}
\end{lem}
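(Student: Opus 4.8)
The plan is to pass to the original variables $\cd = \ad e^{\chi_D v}$ and $\cs = \as e^{\chi_S v}$, for which $(\cd,\cs,v,m)$ solves the equivalent system \eqref{eq:reduced_II} together with the no-flux conditions \eqref{eq:bcs_c}, derive logistic-type differential inequalities for the spatial $L^1$ norms, and compare against scalar ODEs. Since $v\geq 0$ (established in Theorem \ref{loc_existence}) and $\chi_D,\chi_S>0$, one has $\ad = \cd e^{-\chi_D v}\leq \cd$ and $\as = \cs e^{-\chi_S v}\leq \cs$ pointwise, hence $\|\ad(\cdot,t)\|_{L^1(\Omega)}\leq \|\cd(\cdot,t)\|_{L^1(\Omega)}$ and similarly for $\as$. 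This yields the leftmost inequalities in \eqref{lemma.1}--\eqref{lemma.2}, so it remains only to bound the $c$-norms.

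First I would integrate the $\cd$-equation over $\Omega$. The boundary condition \eqref{eq:bcs_c} makes the full flux term $\int_\Omega[\Delta\cd - \chi_D\nabla\cdot(\cd\nabla v)]$ vanish, giving $\frac{d}{dt}\|\cd\|_{L^1(\Omega)} = -\int_\Omega \memt\,\cd + \mu_D\int_\Omega \cd(1-\cs-\cd-v)$. Dropping the nonpositive EMT term (using $\memt,\cd\geq 0$), bounding $1-\cs-\cd-v\leq 1-\cd$ via $\cs,v\geq 0$, and applying the Cauchy--Schwarz estimate $\int_\Omega \cd^2\geq |\Omega|^{-1}\bigl(\int_\Omega \cd\bigr)^2$ turns this into the logistic inequality $y'\leq \mu_D\,y(1-y/|\Omega|)$ for $y(t)=\|\cd(\cdot,t)\|_{L^1(\Omega)}$. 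Comparison with the logistic ODE, whose stable equilibrium is $|\Omega|$, gives $y(t)\leq\max\{\|\cd_0\|_{L^1(\Omega)},|\Omega|\}$, which is \eqref{lemma.1}.

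Next I would integrate the $\cs$-equation. Here the coupling term $+\memt\,\cd$ survives; bounding it by $\mu_M\int_\Omega\cd\leq \mu_M\max\{\|\cd_0\|_{L^1(\Omega)},|\Omega|\}$ via \eqref{eq:memt_bound} and the bound just obtained, and treating the proliferation term as before, yields for $z(t)=\|\cs(\cdot,t)\|_{L^1(\Omega)}$ the forced logistic inequality $z'\leq \mu_M\max\{\|\cd_0\|_{L^1(\Omega)},|\Omega|\} + \mu_S\,z(1-z/|\Omega|)$. The right-hand side is a downward parabola in $z$ whose unique positive root solves $\frac{\mu_S}{|\Omega|}z^2 - \mu_S z - \mu_M\max\{\|\cd_0\|_{L^1(\Omega)},|\Omega|\}=0$; solving this quadratic produces precisely $\cs_{\max}$, and comparison delivers \eqref{lemma.2}. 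Finally, integrating the $m$-equation and using $e^{\chi_S v}\as=\cs$, $e^{\chi_D v}\ad=\cd$ gives $\frac{d}{dt}\|m\|_{L^1(\Omega)} = \int_\Omega(\cs+\cd) - \|m\|_{L^1(\Omega)}$; inserting the two $L^1$-bounds turns this into the linear inequality $w'\leq K_D+K_S-w$, with $K_D,K_S$ the right-hand sides of \eqref{lemma.1}--\eqref{lemma.2}, whose stable equilibrium $K_D+K_S$ yields \eqref{lemma.3}.

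The integrations and the scalar comparison arguments are routine; the one step requiring genuine care is matching the algebra of the forced-logistic equilibrium to the stated closed form, i.e.\ verifying that the positive root of the quadratic above is exactly $\cs_{\max}$. I would keep careful track of the factor $4\mu_M/(\mu_S|\Omega|)$ appearing under the square root, as this is where an arithmetic slip would most easily occur.
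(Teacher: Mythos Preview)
Your proposal is correct and follows essentially the same approach as the paper: integrate the equations for $\cd$, $\cs$, $m$ in \eqref{eq:reduced_II}, discard the flux terms via the no-flux boundary conditions, use nonnegativity and the Cauchy--Schwarz (Jensen) estimate $\int_\Omega \cd^2 \geq |\Omega|^{-1}\|\cd\|_{L^1}^2$ to obtain logistic-type ODE inequalities, and conclude by comparison with the roots of the resulting quadratics. Your identification of $\cs_{\max}$ as the positive root of the forced-logistic quadratic is exactly the computation carried out in the paper, and the leftmost inequalities in \eqref{lemma.1}--\eqref{lemma.2} via $\ad=e^{-\chi_D v}\cd\leq \cd$ are likewise what the paper uses implicitly.
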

\begin{proof}
	We integrate the $\cd$ equation in \eqref{eq:reduced_II} over $\Omega$ and employ the boundary conditions \eqref{eq:bcs_a} and $\cd \geq 0$:
	\begin{align*}
		\frac{d}{dt}\| \cd(\cdot,t) \|_{L^1(\Omega)}=& - \|\memt \cd(\cdot,t) \|_{L^1(\Omega)} 
			+\mu_D \| \cd(\cdot,t) \|_{L^1(\Omega)} - \mu_D \int_{\Omega}\cd(x,t) \cs(x,t) dx \\
			&-\mu_D \int_{\Omega}(\cd(x,t))^2dx -\mu_D \int_{\Omega} \cd(x,t) v(x,t)dx.
	\end{align*}
	Due to the positivity of $\cd, \cs$ and $v$ we obtain
	\begin{align*}
		\frac{d}{dt}\| \cd(\cdot,t) \|_{L^1(\Omega)}\ \leq\ & 
			\mu_D \| \cd(\cdot,t) \|_{L^1(\Omega)} - \mu_D \int_{\Omega}(\cd(x,t))^2 dx.
	\end{align*}
	or, after the boundedness of $\Omega$ and the corresponding embeddings, as 
	\[
		\frac{d}{dt}\| \cd(\cdot,t) \|_{L^1(\Omega)}\  \leq 
			\ \mu_D \| \cd(\cdot,t) \|_{L^1(\Omega)} - \frac{\mu_D}{|\Omega|} \| \cd(\cdot,t) \|_{L^1(\Omega)}^2.
	\]
	Since the right hand side is a quadratic polynomial with roots $0$ and $|\Omega|$, we deduce by comparison
	\[
		\| \cd(\cdot,t) \|_{L^1(\Omega)}\ \ \leq\ \max\left\{\|\cd_0\|_{L^1(\Omega)},|\Omega|\right\}.
	\]
				
	Similarly, we see that due to the positivity of $\cs$, $v$, the $\cs$ equation \eqref{eq:reduced_II} implies
	\begin{align*}
		\frac{d}{dt}\| \cs(\cdot,t) \|_{L^1(\Omega)}
			=&\ \| \memt \|_{L^\infty(\Omega)} \| \cd(\cdot,t) \|_{L^1(\Omega)} +\mu_S \| \cs(\cdot,t) \|_{L^1(\Omega)} - \mu_S \int_{\Omega}\cs(x,t) \cd(x,t) dx\\
			&-\mu_S \int_{\Omega}(\cs(x,t))^2dx -\mu_S \int_{\Omega} \cs(x,t) v(x,t)dx\\
			\leq&\ \mu_M \| \cd(\cdot,t) \|_{L^1(\Omega)} + \mu_S \| \cs(\cdot,t) \|_{L^1(\Omega)} -\mu_S \int_{\Omega}(\cs(x,t))^2 dx\\
			\leq&\ \mu_M \| \cd(\cdot,t) \|_{L^1(\Omega)} + \mu_S \| \cs(\cdot,t) \|_{L^1(\Omega)} -\frac{\mu_S}{|\Omega|} \| \cs(\cdot,t) \|_{L^1(\Omega)}^2\\
			\leq&\ \mu_M \max\left\{\|\cd_0\|_{L^1(\Omega)},|\Omega|\right\} + \mu_S \| \cs(\cdot,t) \|_{L^1(\Omega)} -\frac{\mu_S}{|\Omega|} \| \cs(\cdot,t) \|_{L^1(\Omega)}^2.
	\end{align*}		
	The right-hand side has two roots, one negative and one positive that is larger than $|\Omega|$:
	\[
		\frac{|\Omega|}{2}\( 1 + \sqrt{1 + 4\frac{\mu_M}{\mu_S|\Omega|} \max\left\{\|\cd_0\|_{L^1(\Omega)},|\Omega|\right\} } \) = \cs_{\max}.
	\]
	We deduce by comparison 
	\[
		\| \cs(\cdot,t) \|_{L^1(\Omega)} \leq \max\left\{\|\cs_0\|_{L^1(\Omega)}, {\cs_{\max}}  \right\}.
	\]
	For $m$ we get from \eqref{eq:reduced_II}, after integration over $\Omega$, due to the positivity of $\cd$, $\cs$, $m$, and the boundary conditions \eqref{eq:bcs_a}, that:
	\[ 
		\frac{d}{dt} \|m(\cdot,t)\|_{L^1(\Omega)}\leq  \|\cd(\cdot,t)\|_{L^1(\Omega)} + \|\cs(\cdot,t)\|_{L^1(\Omega)} 
		-\|m(\cdot,t)\|_{L^1(\Omega)}. 
	\]
	Using \eqref{lemma.1} and \eqref{lemma.2} we obtain
	\[
		 \frac{d}{dt} \|m(\cdot,t)\|_{L^1(\Omega)}\leq  \max\left\{\|\cd_0\|_{L^1(\Omega)},|\Omega|\right\} + \max\left\{\|\cs_0\|_{L^1(\Omega)}, {\cs_{\max}}  \right\} 
		-\|m(\cdot,t)\|_{L^1(\Omega)}. 
	\]
	Finally we deduce that 
	\[ 
		\|m(\cdot,t)\|_{L^1(\Omega)} \leq \max \left\{ \|m_0\|_{L^1(\Omega)}, \max\left\{\|\cd_0\|_{L^1(\Omega)},|\Omega|\right\} + \max\left\{\|\cs_0\|_{L^1(\Omega)}, {\cs_{\max}}  \right\}  \right\}. 
	\]
\end{proof}

We have shown uniform in time $L^1$ bounds of $\ad, \as, m.$ In order to prove a uniform in time $L^\infty$ estimate for $a$ we need the following Lemma which can be found (for an arbitrary number of dimensions) in \cite[Lemma 1]{kowalczyk2008global}
and is an extension of  \cite[Lemma 4.1]{horstmann2005boundedness}

\begin{lem}\label{thm:lemma-3.2.Tao}
	Let $m_0\in W^1_\infty(\Omega)$ and let $\cd, \cs, m$ satisfy the equation $m$ in \eqref{eq:reduced_II} together with $\frac{\partial m}{\partial \nu}\big |_{\Gamma_T}=0$. 
	Moreover, we assume that $\|\cd(t)+\cs(t)\|_{L^\rho(\Omega)} \leq \C$ 
	for $1\leq \rho$ and all $t\in(0,T)$.
	Then for $\rho < 2$
	\begin{equation}\label{eq:21.4}
		\|m(t)\|_{W^1_q(\Omega)}\leq \C(q)
		                                              ,\quad t\in(0,T),
	\end{equation}
	where
	\begin{equation}\label{eq:21.5}
		q<\frac{2\rho}{2-\rho}.
	\end{equation}
	Moreover, if $\rho=2$ then \eqref{eq:21.4} is valid for $q<+\infty$, if $\rho>2$ then \eqref{eq:21.4} is valid for $q=+\infty$.
\end{lem}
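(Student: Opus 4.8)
The plan is to represent $m$ through the variation-of-constants formula for the Neumann heat semigroup and then exploit its $L^\rho$–$L^q$ smoothing properties. Writing $A := -\Delta + 1$ for the shifted Neumann Laplacian and setting $f := \cd + \cs \geq 0$, the equation $m_t = \Delta m + \cd + \cs - m$ together with $\partial_\nu m = 0$ becomes $m_t = -Am + f$, whence
$$ m(t) = e^{-tA} m_0 + \int_0^t e^{-(t-s)A} f(s)\, ds. $$
The shift by $1$, coming from the absorption term $-m$, is crucial: under pure Neumann conditions the bottom of the spectrum of $-\Delta$ is $0$, and the absorption pushes it to $1$, so that the semigroup enjoys the exponential decay $e^{-\sigma}$ that makes the time integral below convergent at $\sigma = \infty$.

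The key ingredient I would invoke is the gradient smoothing estimate for the Neumann semigroup: for $1 \leq \rho \leq q \leq \infty$ and $\sigma > 0$,
$$ \|\nabla e^{-\sigma A} w\|_{L^q(\Omega)} \leq \C\, \sigma^{-\frac12 - \frac{d}{2}\left(\frac1\rho - \frac1q\right)}\, e^{-\sigma}\, \|w\|_{L^\rho(\Omega)}. $$
Applying $\nabla$ to the Duhamel formula and the triangle inequality gives
$$ \|\nabla m(t)\|_{L^q(\Omega)} \leq \|\nabla e^{-tA} m_0\|_{L^q(\Omega)} + \int_0^t \|\nabla e^{-(t-s)A} f(s)\|_{L^q(\Omega)}\, ds. $$
The first summand is bounded uniformly in $t$ because $m_0 \in W^1_\infty(\Omega) \hookrightarrow W^1_q(\Omega)$ and standard semigroup bounds control $\|\nabla e^{-tA} m_0\|_{L^q}$ by $\|m_0\|_{W^1_q}$ (with the decay factor to spare). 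For the integral I would insert the smoothing estimate with $\sigma = t-s$ together with the hypothesis $\|f(s)\|_{L^\rho(\Omega)} \leq \C$, obtaining with $\alpha := \frac12 + \frac{d}{2}\left(\frac1\rho - \frac1q\right)$ the bound
$$ \int_0^t \|\nabla e^{-(t-s)A} f(s)\|_{L^q(\Omega)}\, ds \leq \C \int_0^t (t-s)^{-\alpha} e^{-(t-s)}\, ds \leq \C \int_0^\infty \sigma^{-\alpha} e^{-\sigma}\, d\sigma = \C\,\Gamma(1-\alpha). $$

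This last integral is finite precisely when $\alpha < 1$, i.e. $\frac1q > \frac1\rho - \frac1d$, which for $d = 2$ is exactly the stated range $q < \frac{2\rho}{2-\rho}$. The two borderline assertions follow by reading off the same inequality: for $\rho = d = 2$ one has $\frac1\rho - \frac1d = 0$, so $\alpha < 1$ for every finite $q$, whereas for $\rho > d = 2$ one already has $\frac1\rho - \frac1d < 0$, so $\alpha < 1$ even at $q = \infty$, using the endpoint version of the smoothing estimate with the $L^\infty$ gradient bound. An entirely analogous but easier estimate, based on $\|e^{-\sigma A} w\|_{L^q} \leq \C\, \sigma^{-\frac{d}{2}(\frac1\rho - \frac1q)} e^{-\sigma}\|w\|_{L^\rho}$ (without the extra half power from the gradient), yields a uniform bound on $\|m(t)\|_{L^q(\Omega)}$ in the same range. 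Combining the two gives $\|m(t)\|_{W^1_q(\Omega)} \leq \C(q)$ for $t \in (0,T)$, as claimed.

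The only genuinely substantive point is the gradient smoothing estimate itself; once it is available, which is classical and is exactly what underlies the cited references, the remainder is the integrability bookkeeping for the singular kernel $\sigma^{-\alpha} e^{-\sigma}$, whose threshold $\alpha < 1$ reproduces the admissible exponent range. I therefore expect the main obstacle to be purely a matter of citing or reproducing the semigroup estimate in the right form (with the correct power of $\sigma$ and the decay factor); the time-integration and the endpoint discussion are then routine.
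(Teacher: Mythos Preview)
Your proposal is correct and follows the same overall strategy as the paper: write $m$ via the Duhamel formula for the Neumann semigroup generated by $-\Delta+1$, control the initial-data term by the assumed $W^1_\infty$ regularity of $m_0$, and bound the convolution term by the $L^\rho$--$L^q$ smoothing of the semigroup together with the integrability of the resulting singular kernel $\sigma^{-\alpha}e^{-\sigma}$ at $\sigma=0$, which yields exactly the threshold $\alpha<1$, i.e.\ $q<\frac{2\rho}{2-\rho}$ in dimension two.

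The only difference is in how the $W^1_q$ norm is extracted from the semigroup. The paper works with fractional powers of the operator: it bounds $\|(A_\rho+1)^\beta B_2\|_{L^q}$ using $\|(A_\rho+1)^\beta e^{-\tau(A_\rho+1)}\|_{L^q\to L^q}\leq C\tau^{-\beta}e^{-v_1\tau}$ combined with the $L^\rho$--$L^q$ estimate, and then invokes the embeddings $D((A_q+1)^\beta)\hookrightarrow W^1_q(\Omega)$ for $\beta>\tfrac12$ (and $D((A_q+1)^\beta)\hookrightarrow C^1(\bar\Omega)$ for the $q=\infty$ endpoint). You instead apply the gradient directly and use the ready-made estimate $\|\nabla e^{-\sigma A}w\|_{L^q}\leq C\sigma^{-1/2-\frac d2(\frac1\rho-\frac1q)}e^{-\sigma}\|w\|_{L^\rho}$. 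These are equivalent packagings of the same analytic fact; your route is slightly more direct and avoids the fractional-domain machinery, while the paper's route makes the case distinction $\rho<d$, $\rho=d$, $\rho>d$ via the embedding exponents and handles the $q=\infty$ endpoint through the $C^1$ embedding rather than an $L^\infty$ gradient bound.
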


\begin{proof}
	See Appendix \ref{sec:proof_of_lemma-3.2.Tao}.
\end{proof}

We now combine Lemma \ref{thm:lemma-3.2.Tao} with a suitable Sobolev embedding to obtain a uniform bound for $m$ in higher Lebesgue spaces:
	
\begin{lem}\label{thm:lemma-3.3-Tao}
		Let $m_0\in W^1_\infty(\Omega)$, and $\cd, \cs, m$ satisfy the equation for $m$ in \eqref{eq:reduced_II} together with $\frac{\partial m}{\partial \nu}\big |_{\Gamma_T}=0$.
		Moreover, we assume that $\|\cd(t)+\cs(t)\|_{L^\rho(\Omega)} \leq \C$ for $1\leq \rho$, and all $t\in(0,T)$.
		Then, 
		\begin{equation}\label{eq:21.4b}
		\|m(t)\|_{L_r(\Omega)}\leq \C(q)\quad t\in(0,T),
		\end{equation}
		for any $r>\rho$ that satisfies
		\begin{equation}
			\frac 1 r + 1 > \frac 1 \rho .
		\end{equation}
\end{lem}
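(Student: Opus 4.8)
The strategy is to feed the gradient estimate of Lemma~\ref{thm:lemma-3.2.Tao} into the two-dimensional Sobolev embedding $W^1_q(\Omega)\hookrightarrow L^r(\Omega)$, choosing the integrability exponent $q$ as large as the range \eqref{eq:21.5} permits. Since the hypotheses of Lemma~\ref{thm:lemma-3.3-Tao} coincide verbatim with those of Lemma~\ref{thm:lemma-3.2.Tao}, the latter applies directly: under $\|\cd(t)+\cs(t)\|_{L^\rho(\Omega)}\leq\C$ it supplies the uniform-in-time bound $\|m(t)\|_{W^1_q(\Omega)}\leq\C(q)$ for every $q<\tfrac{2\rho}{2-\rho}$ when $\rho<2$, for every finite $q$ when $\rho=2$, and for $q=\infty$ when $\rho>2$.

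Next I would recall the Sobolev embedding in dimension $d=2$: for $q<2$ one has $W^1_q(\Omega)\hookrightarrow L^{q^*}(\Omega)$ with $\tfrac{1}{q^*}=\tfrac1q-\tfrac12$, whereas for $q\geq2$ the space $W^1_q(\Omega)$ embeds into every $L^r(\Omega)$ with $r<\infty$ (into $L^\infty(\Omega)$ when $q>2$). Because $\Omega$ is bounded, $L^{r'}(\Omega)\hookrightarrow L^r(\Omega)$ whenever $r'\geq r$, so it suffices to reach any Lebesgue exponent not smaller than the target $r$.

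The heart of the argument is then an exponent count. If $\rho\geq2$ I simply pick some $q>2$ in the admissible range and use $W^1_q(\Omega)\hookrightarrow L^\infty(\Omega)$, which bounds $\|m(t)\|_{L^r(\Omega)}$ for every $r$. If $1\leq\rho<2$, I rewrite the embedding requirement $\tfrac1r\geq\tfrac1q-\tfrac12$ as $q\geq\tfrac{2r}{r+2}$ and verify the algebraic equivalence
\[
\frac1r+1>\frac1\rho\quad\Longleftrightarrow\quad \frac{2r}{r+2}<\frac{2\rho}{2-\rho},
\]
both sides reducing to $r(1-\rho)<\rho$. Hence the open interval $\left(\tfrac{2r}{r+2},\tfrac{2\rho}{2-\rho}\right)$ is nonempty precisely under the stated hypothesis on $r$; choosing $q$ in it, Lemma~\ref{thm:lemma-3.2.Tao} controls $\|m(t)\|_{W^1_q(\Omega)}$, and the Sobolev embedding (into $L^{q^*}(\Omega)$ and then into $L^r(\Omega)$ when $q<2$, or directly into $L^r(\Omega)$ when $q\geq2$) yields $\|m(t)\|_{L^r(\Omega)}\leq\C(q)$ uniformly in $t\in(0,T)$.

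The computations are all elementary, so the only genuinely delicate point is the exponent bookkeeping at the boundary: one must exploit the strict inequality $q<\tfrac{2\rho}{2-\rho}$ in \eqref{eq:21.5} to guarantee that the interval for $q$ is open and nonempty, and one must correctly split off the threshold $q=2$ where the nature of the Sobolev embedding changes. Once the displayed equivalence is confirmed, the bound follows at once and the claimed admissible range of $r$ is recovered exactly.
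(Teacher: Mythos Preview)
Your argument is correct and follows the same strategy as the paper: combine the $W^1_q$ bound from Lemma~\ref{thm:lemma-3.2.Tao} with the two-dimensional Sobolev embedding $W^1_q(\Omega)\hookrightarrow L^{r'}(\Omega)$ and check the exponent arithmetic. The paper's proof is terser and only writes out the subcritical case (using $r'<\tfrac{2q}{2-q}$), whereas you carry out the full case split on $\rho$ and on the threshold $q=2$; both routes reduce to the same algebraic equivalence $\tfrac1r+1>\tfrac1\rho \Leftrightarrow \tfrac{2r}{r+2}<\tfrac{2\rho}{2-\rho}$.
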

	
\begin{proof}
	The proof is based on the Sobolev embedding $W^1_q(\Omega) \hookrightarrow  L^{r'}(\Omega)$   for $r'< \frac{2q}{2-q}$, and Lemma \ref{thm:lemma-3.2.Tao}.
	
	Since  $q<\frac{2\rho}{2-\rho}$, it holds that $2r'< (2+r')q<(2+r')\frac{2\rho}{2-\rho}$. That is, $\( 2-\frac{2\rho}{2-\rho}\)r'<\frac{4\rho}{2-\rho}$ or 
	\begin{equation}
		\frac{1}{\rho}-1<\frac{1}{r'}.
	\end{equation}
		
	Then it holds 
	\begin{equation}\label{eq:Tao.3.6}
		\|m(t)\|_{L^{r'}(\Omega)}\leq \C,\quad t\in(0,T),
	\end{equation}
	where $r'>\rho$ such that 
	\begin{equation}\label{eq:Tao3.7}
		\frac{1}{r'}+1>\frac{1}{\rho}.
	\end{equation}
\end{proof}

The main result of this section is the following theorem which asserts uniform in time a priori bounds for $\ad$ and $\as$ in $\|\cdot\|_\linfo$.
\begin{theorem}
Let $(\ad,\as,v,m)\in \(\mathcal C^{2,1}(Q_T)\)^4$ be a solution of   \eqref{eq:system_a}, and
let \eqref{eq:mud_bound} hold. Then for all $t\in (0,T)$:
\begin{equation} \label{eq:a_linfty_bounds}
	\| \ad(\cdot,t)\|_\linfo, \| \as(\cdot,t)\|_\linfo \leq \C.
\end{equation}
\end{theorem}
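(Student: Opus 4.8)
The plan is to derive uniform-in-time $L^p(\Omega)$ bounds for $\ad$ and $\as$ for every finite $p$ by a weighted energy estimate, and then to pass from these to the claimed $L^\infty$ bound. The decisive structural point is that the change of variables in \eqref{eq:system_a} turns haptotaxis into the self-adjoint form $e^{-\chi_D v}\nabla\cdot(e^{\chi_D v}\nabla\ad)$, so I would not test against $(\ad)^{p-1}$ but against $p(\ad)^{p-1}e^{\chi_D v}$; equivalently, I would differentiate the weighted quantity $\int_\Omega (\ad)^p e^{\chi_D v}\,dx$. With this weight the advective cross term cancels exactly and an integration by parts (using the no-flux condition) leaves only the good dissipation $-p(p-1)\int_\Omega (\ad)^{p-2}e^{\chi_D v}|\nabla\ad|^2$, with no $\nabla v$ appearing.

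The reaction terms, together with the contribution of $\chi_D v_t e^{\chi_D v}$ produced by differentiating the weight, combine into $\int_\Omega (\ad)^p e^{\chi_D v}\,\rdev\,[\,p\mu_D-(p-1)\chi_D\mu_v v\,]$ plus a residual haptotaxis/MMP term $(p-1)\chi_D\int_\Omega vm(\ad)^p e^{\chi_D v}$. Here the condition \eqref{eq:mud_bound} enters essentially: since $0\le v\le 1$, the bracket is bounded below by $p\mu_D-(p-1)\chi_D\mu_v\ge \mu_D>0$, so the quadratic part $-e^{\chi_D v}\ad$ of $\rdev$ yields a genuine absorption $-\mu_D\int_\Omega (\ad)^{p+1}$, while the remaining (nonpositive) pieces of $\rdev$ are discarded and its constant part contributes only a linear term $\lesssim p\int_\Omega(\ad)^p$. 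The MMP term is controlled by $v\le 1$ and Young's inequality, $(p-1)\chi_D e^{\chi_D}\int_\Omega (\ad)^p m \le \tfrac{\mu_D}{2}\int_\Omega (\ad)^{p+1}+C(p)\int_\Omega m^{p+1}$, where $\int_\Omega m^{p+1}$ is bounded uniformly in time by Lemma \ref{thm:lemma-3.3-Tao} (itself fed by the $L^1$ bounds of Lemma \ref{thm:l1_bounds}), and half of the absorption is kept. Relating weighted and unweighted norms through $1\le e^{\chi_D v}\le e^{\chi_D}$ and $\int_\Omega(\ad)^{p+1}\ge |\Omega|^{-1/p}(\int_\Omega(\ad)^p)^{(p+1)/p}$ gives a differential inequality $y'\le C(p)\,y-c(p)\,y^{(p+1)/p}+C(p)$ for $y=\int_\Omega (\ad)^p e^{\chi_D v}$, whose superlinear damping forces $\sup_{t}\|\ad(\cdot,t)\|_{L^p(\Omega)}\le C(p)$. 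The $\as$ estimate is identical except for the EMT source $\memt\ad$; since $\memt\le\mu_M$, the extra term $p\int_\Omega(\as)^{p-1}e^{\chi_S v}\memt\ad$ is absorbed by Young's inequality into $\tfrac{\mu_S}{2}\int_\Omega(\as)^{p+1}$ plus a power of $\ad$ already controlled, so I would establish the two bounds in this order.

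Having $\ad,\as\in L^p(\Omega)$ uniformly for all finite $p$, and hence $\cd,\cs$ as well since $0\le v\le 1$, I obtain $\|\cd+\cs\|_{L^\rho(\Omega)}\le C$ for some $\rho>2$; Lemma \ref{thm:lemma-3.2.Tao} in its $\rho>2$ case then yields $\|m(\cdot,t)\|_{W^1_\infty(\Omega)}\le C$, in particular $m\in L^\infty$. I would close the argument by upgrading the $L^p$ bounds to \eqref{eq:a_linfty_bounds} via a Moser-type ($L^p\to L^\infty$) iteration, tracking the $p$-dependence of the constants in the energy inequality and using the two-dimensional Gagliardo--Nirenberg inequality to absorb the gradient terms coming from the dissipation.

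I expect the main obstacle to be the haptotaxis/MMP coupling $\chi_D vm(\ad)^p$: it is the only genuinely growing term, and controlling it hinges on having both the absorption $-\mu_D\int_\Omega(\ad)^{p+1}$, which exists only because of \eqref{eq:mud_bound}, and sufficiently high uniform $L^r$ bounds on $m$, which rely on the $L^1$ cell bounds through Lemmas \ref{thm:lemma-3.2.Tao}--\ref{thm:lemma-3.3-Tao}. A secondary technical difficulty is the bookkeeping of the $p$-dependence in the Moser iteration of the final step; everything else reduces to integration by parts and Young's inequality.
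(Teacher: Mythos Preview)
Your proposal is correct and shares the paper's overall architecture: differentiate the weighted integrals $\int_\Omega e^{\chi_D v}(\ad)^p$ and $\int_\Omega e^{\chi_S v}(\as)^p$, derive uniform $L^p(\Omega)$ bounds for every finite $p$, feed these into Lemma~\ref{thm:lemma-3.2.Tao} with some $\rho>2$ to obtain $m\in L^\infty$, and close with a Moser--Alikakos iteration driven by the dissipation term.

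The one genuine difference is in how the finite-$p$ bounds are obtained. The paper never uses the quadratic absorption hidden in $\rdev$: it simply estimates the reaction terms from above via $\rdev\le 1$ and $0\le\mu_D-\chi_D\mu_v v\le\mu_D$, keeps the dissipation $-\tfrac{4(p-1)}{p}\int_\Omega|\nabla(\ad)^{p/2}|^2$, and then runs an inductive bootstrap $L^q\Rightarrow L^{4q/3}$ in which the bad term $\int_\Omega m(\ad)^p$ is split by Young's inequality and Gagliardo--Nirenberg and absorbed into the dissipation (Lemma~\ref{thm:lemma-3.3-Tao} is invoked at each stage with the current $\rho=q$). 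You instead exploit \eqref{eq:mud_bound} to extract the logistic damping $-\mu_D\int_\Omega(\ad)^{p+1}$ and control $\int_\Omega m(\ad)^p$ by Young against $\int_\Omega m^{p+1}$, observing that Lemma~\ref{thm:lemma-3.3-Tao} applied \emph{once} with $\rho=1$ already gives $m\in L^r(\Omega)$ uniformly for every finite $r$. Your route is shorter and bypasses the $L^q\to L^{4q/3}$ iteration, at the price of constants $C(p)$ that explode in $p$; this is harmless because the $L^\infty$ step is handled separately by the dissipation-based Moser iteration, exactly as in the paper. Conversely, the paper's route would survive in variants of the model where the logistic structure of $\rdev$ is weakened, since it uses \eqref{eq:mud_bound} only as a sign condition and relies on diffusion rather than on absorption.
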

\begin{proof}
	
	The proof is divided into 4 steps. We first derive a basic estimate, prove $L^p$ bounds for all $p$ in step two and three and finally prove the $\linfo$ estimate.
	\paragraph{Step 1: First $L^p(\Omega)$ estimates.}  We set $\gamma = 0$ if $p\leq 2$ and $\gamma \in (0,1)$ otherwise, and
	$a_\gamma = a+ \gamma \geq \gamma \geq 0$ so that
	\begin{equation}\label{eq:ag_gradient}
		\nabla \(\adgph\) = \frac p 2  (\adg)^{p/2 -1} \nabla \adg, \quad\nabla \(\asgph\) = \frac p 2  (\asg)^{p/2 -1} \nabla \asg, \quad \text{for any } p>1.
	\end{equation}
	Since $0\leq v \leq 1$ we can consider the integrals $\int_{\Omega} e^{\chi_D v} \adgp dx$, $\int_{\Omega} e^{\chi_S v} \asgp dx$ instead of $\int_{\Omega}\adgp dx,$ $\int_{\Omega}\asgp dx$, and get moreover
	\begin{equation}
		0 \leq \mu_D - \chi_D \mu_v \leq \mu_D, \quad 0 \leq \mu_S - \chi_S \mu_v \leq \mu_S, \label{eq_a_estimate_condition}
	\end{equation}
	using the above assumption.
	Using \eqref{eq:system_a}, \eqref{eq_a_estimate_condition}, partial integration, \eqref{eq:ag_gradient}, \eqref{eq:memt_bound}, and the fact that $0\leq v \leq 1$, we obtain 
	\begin{align}
		\frac{d}{dt}\int_{\Omega} \, e^{\chi_D v} \adgp dx =& \int_{\Omega} \chi_D e^{\chi_D  v} \partial_t v \adgp \, dx + 
		\int_{\Omega} e^{\chi_D \,v } p (\adg)^{p-1} \partial_t \ad \, dx \nonumber \\
		 =& - \chi_D \int_{\Omega} e^{\chi_D v} m v \adgp dx + \chi_D\, \mu_v \int_{\Omega} e^{\chi_D  v}p \adgp v \vfree \, dx \nonumber\\ &+
		\int_{\Omega} p (\adg)^{p-1} \nabla \cdot(e^{\chi_D v}\nabla \ad)\,dx  
		+ \chi_D \int_{\Omega}e^{\chi_D  v} p (\adg)^{p-1} \ad v m \, dx \nonumber\\ 
		&- \int_{\Omega}\memt\, \ad e^{\chi_D v}p (\adg)^{p-1}\, dx 
		+ \int_\Omega e^{\chi_D v} p (\adg)^{p-1} (\mu_D - \chi_D\, \mu_v\, v) \ad \vfree\, dx \nonumber\\
		\leq &(\mu_D \, p + \chi_D \, \mu_v\, p) \int_{\Omega} e^{\chi_D  v} \adgp \, dx + \chi_D \, p \int_{\Omega} e^{\chi_D \, v} \adgp m \, dx \nonumber\\
		&- \int_{\Omega}p(p-1)(\adg)^{p-2}|\nabla \adg|^2 e^{\chi_D v}\, dx \nonumber \\
		\leq& - \frac{4(p-1)}{p}  \int_{\Omega} |\nabla \adgph|^2\, dx + (\mu_D\, p + \chi_D \, \mu_v\, p)e^{\chi_D} \int_{\Omega}  \adgp \, dx \nonumber \\ &+ \chi_D \, p e^{\chi_D} \int_{\Omega} m \adgp \, dx. \label{eq:ad_lp_estimate}
	\end{align}
	Similarly, we get
		\begin{align}
		\frac{d}{dt}\int_{\Omega} \, e^{\chi_S v} \asgp dx =& \int_{\Omega} \chi_S e^{\chi_S  v} \partial_t v \asgp \, dx + 
		\int_{\Omega} e^{\chi_S \,v } p (\asg)^{p-1} \partial_t \as \, dx \nonumber \\
		=& - \chi_S \int_{\Omega} e^{\chi_S v} m v \asgp dx + \chi_S\, \mu_v \int_{\Omega} e^{\chi_S  v}p \asgp v \vfree \, dx \nonumber \\
		 &+\int_{\Omega} p (\asg)^{p-1} \nabla \cdot(e^{\chi_S v}\nabla \as)\,dx 
		+ \chi_S \int_{\Omega}e^{\chi_S  v} p (\asg)^{p-1} \as v m \, dx \nonumber \\ 
		&+ \int_{\Omega}\memt\, \ad e^{\chi_S v}p (\asg)^{p-1}\, dx \nonumber\\
		&+ \int_\Omega e^{\chi_S v} p (\asg)^{p-1} (\mu_S - \chi_S\, \mu_v\, v) \as \vfree\, dx \nonumber\\
		\leq& - \frac{4(p-1)}{p}  \int_{\Omega} |\nabla \asgph|^2\, dx + (\mu_S\, p + \chi_S \, \mu_v\, p)e^{\chi_S} \int_{\Omega}  \asgp \, dx \nonumber \\
		&+ \chi_S \, p e^{\chi_S} \int_{\Omega} m \asgp \, dx + \mu_M\, p e^{\chi_S} \int_{\Omega} \ad  (\asg)^{p-1}\, dx. \label{eq:as_lp_estimate}
		\end{align}

	\paragraph{Step 2: Raise of $p$.} We assume that both $\| \adg(\cdot,t) \|_\lqn, \| \asg(\cdot,t) \|_\lqn \leq \Cl{c1}$ for some $q \geq 1$ and show that $$\| \adg(\cdot,t)\|_{L^p(\Omega)}, \| \asg(\cdot,t)\|_{L^p(\Omega)}\leq \Cl{c2},$$
	where $ p = \frac{4 q}{3}$.
	
	Since we are in  $d = 2$ space dimensions the inequality
	\begin{equation}
		\frac{dp}{dp + 2 q} < 1 + \frac{2}{d} - \frac{1}{q},
	\end{equation}
	is true and allows us to find $r>1$, such that
		\begin{equation}
		\frac{dp}{dp + 2 q} < \frac{1}{r}<  1 + \frac{2}{d} - \frac{1}{q}.
		\end{equation}
		The first inequality justifies the Gagliardo-Nirenberg inequality
		\begin{equation} \label{eq:gag_nir_1}
			\| \cdot\|^{2r}_{L^{2r}} \leq \C \| \cdot\|^{2(r-1)}_{L^{2q/p}} \| \cdot \|^2_{W^1_2},
		\end{equation}
			and due to the second inequality there is a dual exponent $r'$ of $r$ that satisfies the conditions of Lemma \ref{thm:lemma-3.3-Tao}. We take $a \in \{\adg, \asg\}$.
			Applying Young's inequality, \eqref{eq:gag_nir_1}, Lemma \ref{thm:lemma-3.3-Tao}, and assumption $\| a(\cdot,t) \|_\lqn \leq \Cr{c1}$, we get for any $\varepsilon>0$
		\begin{align}
			\int_{\Omega} m a^p \, dx &\leq \C(\varepsilon) \int_{\Omega} m^{r'}\, dx + \varepsilon \int_{\Omega} a^{pr} \, dx \nonumber\\ & \leq \Cl{c3}(\varepsilon) + \varepsilon \| a^{p/2}\|^{2r}_{L^{2r}} \nonumber \\
			& \leq \Cr{c3}(\varepsilon) + \varepsilon \C \| a \|^{p(r-1)}_{L^q} \|a^{p/2}\|^2_{W^1_{2}} \nonumber \\
			&\leq  \Cr{c3}(\varepsilon) + \varepsilon \Cl{c4} \int_{\Omega}a^p \, dx + \varepsilon \Cr{c4}  \int_{\Omega}|\nabla a^{p/2}|^2 \, dx. \label{eq:map_estimate}
		\end{align}
		Since we are in two space dimensions we have the Gagliardo-Nirenberg interpolation inequality
		\begin{equation}\label{eq:gag_nir_2}
				\| \cdot\|_{L^{2}} \leq \C \| \cdot\|^{1/4}_{W^1_2} \| \cdot \|^{3/4}_{L^{3/2}},
		\end{equation}
		and we can moreover estimate $\int_\Omega a^p\, dx$ by employing \eqref{eq:gag_nir_2}, Young's inequality and $\| a(\cdot,t) \|_\lqn \leq \Cr{c1}$
			\begin{align}
				(\Cl{c6} + \beta) \int_{\Omega} a^p \, dx & = (\Cr{c6} + \beta) \int_{\Omega} (a^{p/2})^2 \, dx \nonumber \\
				& = (\Cr{c6} + \beta) \|a^{p/2} \|_{L^2}^2 \nonumber \\
				& \leq \C (\Cr{c6} + \beta) \| a^{p/2} \|^{1/2}_{W_2^1(\Omega)} \| a^{p/2} \|^{3/2}_{L^{3/2}(\Omega)} \nonumber \\
				& \leq \frac{\beta}{2} \|a^{p/2} \|^2_{W_2^1(\Omega)} + \Cl{c7} \| a^{p/2} \|^{2}_{L^{3/2}(\Omega)} \nonumber \\
				& = \frac{\beta}{2} \| a^{p/2} \|^2_{W_2^1(\Omega)} + \Cr{c7} \| a \|^{p}_{L^{3p/4}(\Omega)} \nonumber \\
				& \leq \frac{\beta}{2} \| a^{p/2} \|^2_{W_2^1(\Omega)} + \C, \label{eq:ap_sob_estimate}
			\end{align}
		where $\Cr{c6}$ and $\beta$ are arbitrary positive numbers.
		
		In order to prove the $L^p$ bound for $\ad$ we insert \eqref{eq:map_estimate} where $a=\ad$ into \eqref{eq:ad_lp_estimate} and fix  $\varepsilon$ such that
		$\varepsilon\chi_D p e^{\chi_D} \Cr{c4} <2(p-1)/p $	to obtain
		
		\begin{equation}
			\frac{d}{dt} \int_{\Omega} e^{\chi v} \adgp\, dx \leq - \frac{2(p-1)}{p}\int_{\Omega} |\nabla \adgph|^2 \, dx
			+ \Cl{c9} \int_{\Omega} \adgp \, dx + \Cl{c10}. \label{eq:ad_lp_estimate_2}
		\end{equation}
		By adding $\beta \int_{\Omega} \adgp \, dx$ on both sides of \eqref{eq:ad_lp_estimate_2} we get
				\begin{equation}
				\frac{d}{dt} \int_{\Omega} e^{\chi_D v} \adgp\, dx + \beta \int_{\Omega} \adgp \, dx \leq - \frac{2(p-1)}{p}\int_{\Omega} |\nabla \adgph|^2 \, dx
				+ (\Cr{c9} + \beta) \int_{\Omega} \adgp \, dx + \Cr{c10}. \label{eq:ad_lp_estimate_3}
				\end{equation} 
		We can now insert \eqref{eq:ap_sob_estimate}, where $a=\ad$ and $\beta =2(p-1)/p$ into \eqref{eq:ad_lp_estimate_3} and get
						\begin{equation}
						\frac{d}{dt} \int_{\Omega} e^{\chi_D v} \adgp\, dx + \frac{p-1}{p} \int_{\Omega} \adgp \, dx \leq  \Cl{c11}, \label{eq:ad_lp_estimate_4}
						\end{equation} 
						which implies
						\begin{equation}
						\frac{d}{dt} \int_{\Omega} e^{\chi_D v} \adgp\, dx + \frac{p-1}{p\,e^{\chi_D}} \int_{\Omega} e^{\chi_D v} \adgp \, dx \leq  \Cr{c11}, \label{eq:ad_lp_estimate_5}
						\end{equation} 
						and thus
						\begin{equation}
						\int_{\Omega} e^{\chi_D v} \adgp\, dx \leq  \C. \label{eq:ad_lp_estimate_6}
						\end{equation}
						Hence we have shown that
						\begin{equation}
						\|\ad(\cdot,t) \|_\lpn \leq  \C. \label{eq:ad_lp_estimate_final}
						\end{equation}
						
				An application of Young's inequality and \eqref{eq:ad_lp_estimate_final} lead to
				\begin{align}
				\int_{\Omega} \ad  (\asg)^{p-1}\, dx &\leq \frac{p-1}{p} \int_{\Omega} \asgp \, dx + \frac{1}{p} \int_{\Omega} (\ad)^p \, dx \nonumber\\
				&\leq \C \int_{\Omega} \asgp\, dx + \C. \label{eq:ad_asp_estimate}
				\end{align}
				Inserting \eqref{eq:ad_asp_estimate} into \eqref{eq:as_lp_estimate} yields
				\begin{equation}
					\frac{d}{dt}\int_{\Omega} \, e^{\chi_S v} \asgp dx 
					\leq - \frac{4(p-1)}{p}  \int_{\Omega} |\nabla \asgph|^2\, dx + \C \int_{\Omega}  \asgp \, dx 
					+ \C \int_{\Omega} m \asgp \, dx + \C. \label{eq:as_lp_estimate_2}
				\end{equation}
				Since \eqref{eq:map_estimate} and \eqref{eq:ap_sob_estimate} are also valid for $a=\as$ we can repeat the steps
				in \eqref{eq:ad_lp_estimate_2}--\eqref{eq:ad_lp_estimate_6} for \eqref{eq:as_lp_estimate_2} to get 
				\begin{equation}
				\|\as(\cdot,t) \|_\lpn \leq  \C. \label{eq:as_lp_estimate_final}
				\end{equation}
				
				\paragraph{Step 3: $L^p$ bounds for all $p  \geq 1$.}
				From Lemma \ref{thm:l1_bounds} and the previous step,
				\begin{equation}
					\|\ad(\cdot,t) \|_{L^{(4/3)^n}(\Omega)},~\|\as(\cdot,t) \|_{L^{(4/3)^n}(\Omega)} \leq \C(n)< \infty\quad \text{for all }n\in\mathbb{N},
				\end{equation} 
				follows from induction. Hence, we have that
				\begin{equation}
						\|\ad(\cdot,t)\|_\lpn,~\|\as(\cdot,t) \|_\lpn \leq \C(p)< \infty \quad \text{for all }p \geq 1.
						\label{eq:ad_as_all_lp}
				\end{equation}
				
				\paragraph{Step 4: $L^\infty$ bounds.} 
				For the step we employ this technique used in \cite{Alikakos.1979} and applied in the case of KS system in \cite{Corrias.2004}.	We are in $d=2$ space dimensions and we know from step 3 
				that there is $\rho > d =2$ such that
				$\| \cd + \cs\|_{L^\rho(\Omega)} \leq C_{20}$. Therefore we get by Lemma \ref{thm:lemma-3.2.Tao}
				\begin{equation} \label{eq:mlinf_bound}
					\|m\|_\linf \leq \C.
				\end{equation}
				Inserting \eqref{eq:mlinf_bound} back into \eqref{eq:ad_lp_estimate} we get that
			\begin{equation}
		\frac{d}{dt} \int_{\Omega} e^{\chi_D v} (\ad)^p \, dx  + \frac{4(p-1)}{p}\int_{\Omega} |\nabla (\ad)^{p/2}|^2 \, dx
		\leq  \Cl{c22} \,p \int_{\Omega} (\ad)^p \, dx \quad \text{for all }p\geq \rho.\label{eq:ad_lp_to_linf_1}
			\end{equation}
			We define the sequence $p_k = 2^k,~ k\in \mathbb{N}$ and moreover, we apply the Gagliardo-Nirenberg inequality
					\begin{equation}\label{eq:gag_nir_3}
					\| \cdot\|_{L^{2}} \leq \C \| \cdot\|^{1/2}_{W^1_2} \| \cdot \|^{1/2}_{L^{1}}.
					\end{equation}
			 Thus, we get for  $a \in \{\ad, ~ \as \}$ by \eqref{eq:gag_nir_3} and Young's inequality that
			\begin{equation}
				\int_{\Omega} a^{p_{k}} \, dx = \| a^{p_{k-1}} \|_{L^2(\Omega)}^2 \leq \Cl{c23} \| a^{p_{k-1}} \|_{W^1_2 (\Omega)} \| a^{p_{k-1}} \|_{L^1(\Omega)} \leq \Cr{c23} \(\frac{1}{\varepsilon_k}  \| a^{p_{k-1}} \|^2_{L^1(\Omega)} + \varepsilon_k \| a^{p_{k-1}} \|^2_{W^1_2 (\Omega)}\)
			\end{equation}
			which implies for sufficiently small $\varepsilon_k$
			\begin{equation}
					\int_{\Omega} a^{p_{k}} \, dx \leq \Cr{c23} \(\frac{1}{\varepsilon_k}  \| a^{p_{k-1}} \|^2_{L^1(\Omega)} + \varepsilon_k \|\nabla  a^{p_{k-1}} \|^2_{L^2(\Omega)}\). \label{eq:ap_ph_estimate}
			\end{equation}
			Adding $\varepsilon_k e^{\chi_D} \int_{\Omega} (\ad)^{p_k} \, dx$ on both sides of \eqref{eq:ad_lp_to_linf_1}, choosing $\varepsilon_k$ such that 
			\begin{equation}
				(\Cr{c22}\, p_k + \varepsilon_k e^{\chi_D}) \Cr{c23} \,\varepsilon_k \leq 4(p_k-1)/p_k < 4 \label{eq:ad_linf_eps_choice}
			\end{equation}			
			 in \eqref{eq:ap_ph_estimate} for $a = \ad$ and inserting in \eqref{eq:ad_lp_to_linf_1} yield for $k\geq2$
						\begin{equation}
						\frac{d}{dt} \int_{\Omega} e^{\chi_D v} (\ad)^{p_k} \, dx  
						+ \varepsilon_k e^{\chi_D} \int_{\Omega} (\ad)^{p_k} \, dx \leq  \frac{(\Cr{c22}\, p_k + \varepsilon_k e^{\chi_D}) \Cr{c23}}{\varepsilon_k} \( \int_{\Omega} (\ad)^{p_{k-1}} \, dx \)^2. \label{eq:ad_lp_to_linf_2}
						\end{equation}
						The later implies that 
						\begin{equation}
						\frac{d}{dt} \int_{\Omega} e^{\chi_D v} (\ad)^{p_k} \, dx  
						\leq  -\varepsilon_k \int_{\Omega} e^{\chi_D v} (\ad)^{p_k} \, dx + \frac{(\Cr{c22}\, p_k + \varepsilon_k e^{\chi_D}) \Cr{c23}}{\varepsilon_k} M_{k-1}^2 , \label{eq:ad_lp_to_linf_3}
						\end{equation}
						where
						\begin{equation}
							M_k = \max \left\{ 1, \sup_{0< t < T}\int_{\Omega} e^{\chi_D v} (\ad)^{p_{k}} \, dx \right\}.
						\end{equation}
						By Gronwall's lemma we get from \eqref{eq:ad_lp_to_linf_3}, that
						\begin{equation}
							\int_{\Omega} e^{\chi_D v} (\ad)^{p_k} \, dx  \leq \max \left\{ \int_{\Omega} e^{\chi_D v_0} (\ad_0)^{p_k} \, dx, \frac{(\Cr{c22}\, p_k + \varepsilon_k e^{\chi_D}) \Cr{c23}}{\varepsilon_k^2} M_{k-1}^2 \right\}\quad \text{for }k\geq 2. \label{eq:ad_lp_to_linf_gronwall}
						\end{equation} 
						Hence 
						\begin{equation}
						M_k  \leq \max \left\{1,~  |\Omega| e^{\chi_D} \|(\ad_0)\|_\linfo^{p_k} ,~ \delta_k M_{k-1}^2 \right\}\quad \text{for }k\geq 2, \label{eq:ad_lp_to_linf_4}
						\end{equation}
						where $\delta_k = \max \{ 1,~(\Cr{c22}\, p_k + \varepsilon_k e^{\chi_D}) \Cr{c23}/\varepsilon_k^2$\}. 
						Note that by \eqref{eq:iniData} and \eqref{eq:ad_as_all_lp}
						we can find a constant $\Cl{cspec}$ such that
						\begin{equation}
							M_1 + 1 \leq \Cr{cspec}, \quad	|\Omega| e^{\chi_D} \|(\ad_0)\|_\linfo^{p_k} \leq \Cr{cspec}^{p_k} \text{ for }k\geq 1. \label{eq:ad_const_choice}
						\end{equation}
						From \eqref{eq:ad_lp_to_linf_4}, \eqref{eq:ad_const_choice} and $\delta_k \geq 1$ we get that
						\begin{equation}
							M_k \leq \delta_k \, \delta_{k-1}^{p_1}\, \delta_{k-2}^{p_2} \cdots \delta_2^{p_{k-2}} \,
							\delta_1^{p_{k-1}} \Cr{cspec}^{p_k}.
						\end{equation}
						Furthermore, we get from \eqref{eq:ad_linf_eps_choice} that $\varepsilon_k$ can be chosen as $\varepsilon_k = \Cl{c26}/p_k$,
						where the constant $\Cr{c26}$ is independent of $k$. This yields 
						$$ \delta_k \leq \Cl{c27}p_k^3$$ and hence
						\begin{equation}
						M_k^{1/p_k} \leq \Cr{c27}^{\sum_{i=0}^{k-1} 2^{i-k}} \, 2^{3 \sum_{i=0}^{k-1} 2^{i-k} (k-i)} \Cr{cspec}
						\leq \Cr{c27}^{1-\frac{1}{p_k}} 2^{3 \sum_{i=1}^{k} \frac{i}{2^i}} \Cr{cspec}. \label{eq:ad_lp_to_linf_5}
						\end{equation}
						For $0< t< T$ we note that $\max\{1,~\|\ad(\cdot, t)\|_{L^{p_k}(\Omega)}\}\leq M_k^{1/p_k}$  by $0\leq v\leq 1$ and when taking $k \rightarrow \infty$ in \eqref{eq:ad_lp_to_linf_5} we eventually get
						\begin{equation}
							\|\ad(\cdot, t) \|_\linfo \leq \C. \label{eq:ad_lp_to_linf_final}
						\end{equation}
						
						Using the bounds \eqref{eq:mlinf_bound}, \eqref{eq:ad_lp_to_linf_final} as well as the sequence $p_k = 2^k,~ k\in \mathbb{N}$ in \eqref{eq:as_lp_estimate}
						yields for $k\geq 2$
						\begin{equation}
						\frac{d}{dt} \int_{\Omega} e^{\chi_D v} (\as)^{p_k} \, dx  + \frac{4(p_k-1)}{p_k}\int_{\Omega} |\nabla (\as)^{p_{k-1}}|^2 \, dx
						\leq  \C \,p_k \int_{\Omega} (\as)^{p_k} \, dx + \C \,p_k \int_{\Omega} (\as)^{p_k-1} \, dx.\label{eq:as_lp_to_linf_1}
						\end{equation}
						By Hölder's inequality we estimate
						\begin{equation}
							\int_{\Omega}(\as)^{p_k-1} \, dx \leq | \Omega|^{1/p_k} \(\int_{\Omega}(\as)^{p_k} \, dx \)^{(p_k-1)/p_k} \leq \C \( \int_{\Omega}(\as)^{p_k} \, dx + 1 \)
						\end{equation}
						and get 
						\begin{equation}
						\frac{d}{dt} \int_{\Omega} e^{\chi_D v} (\as)^{p_k} \, dx  + \frac{4(p_k-1)}{p_k}\int_{\Omega} |\nabla (\as)^{p_{k-1}}|^2 \, dx
						\leq  \Cl{c32}\,p_k \( \int_{\Omega}(\as)^{p_k} \, dx + 1 \) .\label{eq:as_lp_to_linf_2}
						\end{equation}
						
We add again $\varepsilon_k e^{\chi_S} \int_{\Omega} (\as)^{p_k} \, dx$ on both sides of \eqref{eq:as_lp_to_linf_2} and choose $\varepsilon_k$ such that 
\begin{equation}
(\Cr{c32}\, p_k + \varepsilon_k e^{\chi_S}) \Cr{c23} \,\varepsilon_k \leq 4(p_k-1)/p_k < 4 \label{eq:as_linf_eps_choice},
\end{equation} 
where  $\Cr{c23}$, and $\varepsilon_k$ are chosen such that \eqref{eq:ap_ph_estimate} is true for $a = \as$.
By setting $\varepsilon_k= \C / p_k$ we
find a constant $\Cl{c34} > \Cr{c23}$ such that
\begin{equation}
	\frac{(\Cr{c32}\, p_k + \varepsilon_k e^{\chi_S}) \Cr{c34}}{\varepsilon_k} \geq \Cr{c32} p_k. \label{eq:as_linf_eps_choice_2}
\end{equation}
Inserting \eqref{eq:as_linf_eps_choice_2} into \eqref{eq:as_lp_to_linf_2} yields
\begin{equation}
\frac{d}{dt} \int_{\Omega} e^{\chi_S v} (\as)^{p_k} \, dx  
\leq  -\varepsilon_k \int_{\Omega} e^{\chi_S v} (\as)^{p_k} \, dx + \frac{2(\Cr{c32}\, p_k + \varepsilon_k e^{\chi_S}) \Cr{c34}}{\varepsilon_k} M_{k-1}^2 , \label{eq:as_lp_to_linf_3}
\end{equation}
where
\begin{equation}
M_k = \max \left\{ 1, \sup_{0< t < T}\int_{\Omega} e^{\chi_S v} (\as)^{p_{k}}\right\}.
\end{equation}
Using the same argumentation as in \eqref{eq:ad_lp_to_linf_gronwall}--\eqref{eq:ad_lp_to_linf_final} it follows for $0< t < T$ that also
						\begin{equation}
						\|\as(\cdot,t) \|_\linfo \leq \C, \label{eq:as_lp_to_linf_final}
						\end{equation}
which completes the proof.
\end{proof}

\section{A priori estimate for $\| \nabla v\|_{L^4(\Omega)}$}\label{sec:apriori-v}
We begin by deriving estimates for $\nabla a^D,$ $a^D_t $, $\nabla a^S$ and $a^S_t$.
Let us recall \eqref{eq:mud_bound}
and, hence, by \eqref{eq:a_linfty_bounds} and Lemma \ref{thm:lemma-3.2.Tao} we have
\begin{equation}\label{bounds:sec4}
 \| \ad(t) \|_{L^\infty(\Omega)} ,\ \| \as(t) \|_{L^\infty(\Omega)} \text{ and }\| m(t) \|_{W^1_\infty(\Omega)} \leq C.
\end{equation}

\begin{lem}
 Assume that $(\ad, \as, v, m) \in (\hoels{2}{1})^4$ is a solution of \eqref{eq:system_a}. Then for all $t \in (0,T)$ the following inequalities are fulfilled
 \begin{align}\label{lem:41}
  & \| \nabla \ad(t) \|_{L^2(\Omega)} \leq \C e^{\chi_D \mu_v t},\ \| \ad_t  \|_{L^2(Q_t)} \leq \C t + \C e^{\chi_D \mu_v t}\\
   & \| \nabla \as(t) \|_{L^2(\Omega)} \leq \C e^{\chi_S \mu_v t},\  \| \as_t  \|_{L^2(Q_t)} \leq \C t + \C e^{\chi_S \mu_v t}.
 \end{align}

\end{lem}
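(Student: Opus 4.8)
The plan is to prove the two inequalities for $\ad$; those for $\as$ then follow by an identical argument with $\chi_S,\mu_S$ in place of $\chi_D,\mu_D$ (the extra source term $\memt\ad$ in the $\as$-equation is harmless, being bounded in $L^\infty(\Omega)$ by $\mu_M$ and \eqref{bounds:sec4}). The natural multiplier is $e^{\chi_D v}\ad_t$: multiplying the $\ad$-equation of \eqref{eq:system_a} by $e^{\chi_D v}$ renders the diffusion in divergence form, $e^{\chi_D v}\ad_t = \nabla\cdot(e^{\chi_D v}\nabla\ad) + e^{\chi_D v}R_D$, where $R_D := \chi_D\ad vm - \memt\ad + (\mu_D - \chi_D\mu_v v)\ad\rdev$ collects the reaction terms. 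First I would multiply this by $\ad_t$, integrate over $\Omega$, and integrate the diffusion term by parts; the boundary contribution vanishes thanks to the no-flux condition $\partial_\nu\ad = 0$ in \eqref{eq:bcs_a}. Differentiating the weight in time, this yields the energy identity
\[
\int_\Omega e^{\chi_D v}\ad_t^2\,dx + \frac{1}{2}\frac{d}{dt}\int_\Omega e^{\chi_D v}|\nabla\ad|^2\,dx = \int_\Omega e^{\chi_D v}\ad_t R_D\,dx + \frac{\chi_D}{2}\int_\Omega v_t\,e^{\chi_D v}|\nabla\ad|^2\,dx .
\]

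Next I would bound the two terms on the right. For the reaction term, Young's inequality gives $\int_\Omega e^{\chi_D v}\ad_t R_D \le \tfrac12\int_\Omega e^{\chi_D v}\ad_t^2 + \tfrac12\int_\Omega e^{\chi_D v}R_D^2$; the first summand is absorbed into the left-hand side, while $R_D$ is bounded in $L^\infty(\Omega)$ uniformly in $t$ by \eqref{bounds:sec4}, \eqref{eq:memt_bound} and $0\le v\le 1$ (so that $|\rdev|\le\C$), whence $\int_\Omega e^{\chi_D v}R_D^2 \le \C$. The decisive term is the one carrying $v_t$, and the key observation is that only an \emph{upper} bound on $v_t$ is needed, since a negative $v_t$ only improves the estimate. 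From the $v$-equation, $v_t = -mv + \mu_v v\rdev \le \mu_v$, because $-mv\le 0$ and $v,\rdev\le 1$; consequently $\frac{\chi_D}{2}\int_\Omega v_t\,e^{\chi_D v}|\nabla\ad|^2 \le \frac{\chi_D\mu_v}{2}\,y(t)$, where $y(t):=\int_\Omega e^{\chi_D v}|\nabla\ad(t)|^2\,dx$.

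Combining these bounds produces the differential inequality $y'(t) \le \chi_D\mu_v\,y(t) + \C$, alongside the retained dissipation $\tfrac12\int_\Omega e^{\chi_D v}\ad_t^2$. Gronwall's lemma then gives $y(t)\le\C e^{\chi_D\mu_v t}$, and since $1\le e^{\chi_D v}\le e^{\chi_D}$ this is equivalent to $\|\nabla\ad(t)\|_{L^2(\Omega)}^2\le \C e^{\chi_D\mu_v t}$, i.e. the first asserted bound (in fact with the sharper rate $\tfrac12\chi_D\mu_v$, which a fortiori dominates). For the time-derivative estimate I would integrate the energy inequality over $(0,t)$: the dissipation integrates to $\int_0^t\!\int_\Omega e^{\chi_D v}\ad_t^2$, the right-hand side contributes $\C t$ from the reaction and $\frac{\chi_D\mu_v}{2}\int_0^t y(s)\,ds\le\C e^{\chi_D\mu_v t}$ from the gradient term (using the bound just obtained), while $\tfrac12 y(t)\ge 0$ is discarded and the constant $\tfrac12 y(0)$ is absorbed. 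Since $e^{\chi_D v}\ge 1$ this yields $\|\ad_t\|_{L^2(Q_t)}^2\le \C t + \C e^{\chi_D\mu_v t}$, and taking square roots (with $t^{1/2}\le \C(1+t)$) gives the stated form $\|\ad_t\|_{L^2(Q_t)}\le \C t + \C e^{\chi_D\mu_v t}$.

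The only genuinely delicate point is the treatment of the $v_t$-term: a priori $v_t$ is merely continuous and can be large and negative where $m$ is large, so one cannot bound $|v_t|$ by a constant. The argument succeeds precisely because the sign structure of the weighted energy means only the one-sided bound $v_t\le\mu_v$ is required, and this bound delivers exactly the exponential rate appearing in the statement. Everything else --- the integration by parts, the uniform $L^\infty$-control of the reaction terms, and the Gronwall step --- is routine given the regularity $(\ad,\as,v,m)\in(\hoels{2}{1})^4$ and the a priori bounds \eqref{bounds:sec4}.
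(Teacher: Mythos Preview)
Your proposal is correct and follows essentially the same approach as the paper: multiply the $\ad$-equation by $e^{\chi_D v}\ad_t$, integrate by parts, use the one-sided bound $v_t \le \mu_v$ on the weight term, absorb the reaction terms via Young's/Cauchy's inequality using \eqref{bounds:sec4}, and conclude by Gronwall. The only cosmetic difference is that the paper splits the reaction into three pieces $I_2^D,I_3^D,I_4^D$ (each absorbing $\tfrac14\int e^{\chi_D v}(\ad_t)^2$) whereas you bundle them into a single $R_D$, but the substance is identical.
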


\begin{proof}
We begin by multiplying equation for $\ad$ in \eqref{eq:system_a} by $e^{\chi_D v } \ad_t$  and integrating over $\Omega$. We obtain
\begin{multline}\label{p41}\int_\Omega e^{\chi_D v } (\ad_t)^2 \ dx= \int_\Omega \ad_t \nabla \cdot \(e^{\chi_D v} \nabla\ad\) \, dx  +\int_\Omega  e^{\chi_D v } \ad_t \chi_D \ad vm dx\\
- \int_\Omega \memt \, \ad e^{\chi_D v } \ad_t \, dx
					+\int_\Omega e^{\chi_D v } \ad_t (\mu_D-\chi_D \mu_v\,v)\ad \rdev \, dx =: I_1^D + I_2^D + I_3^D + I_4^D.
					\end{multline}
Due to \eqref{eq:system_a}, the bounds from Theorem \ref{loc_existence} and the no-flux boundary condition for $\ad$ we have
\begin{align}\label{I1D:est}
 I_1^D& = \int_\Omega \ad_t \nabla \cdot \(e^{\chi_D v} \nabla\ad\) \, dx \nonumber \\
  &=- \frac{1}{2}\int_\Omega (e^{\chi_D v}\frac{\partial}{\partial t} (| \nabla\ad|^2 ) \, dx\\
  &= - \frac{1}{2}\frac{d}{dt} \int_\Omega e^{\chi_D v}(| \nabla\ad|^2 )\, dx + \frac{\chi_D}{2} \int_\Omega e^{\chi_D v} | \nabla\ad|^2  v_t \, dx \nonumber  \\
   &= - \frac{1}{2}\frac{d}{dt} \int_\Omega e^{\chi_D v}(| \nabla\ad|^2 )\, dx + \frac{\chi_D}{2} \int_\Omega e^{\chi_D v} | \nabla\ad|^2 ( -m v + \mu_v\, v \rdev) \, dx \nonumber \\
    &\leq - \frac{1}{2}\frac{d}{dt} \int_\Omega e^{\chi_D v}(| \nabla\ad|^2 )\, dx + \frac{\chi_D\mu_v}{2} \int_\Omega e^{\chi_D v} | \nabla\ad|^2  \, dx \nonumber .
\end{align}
By Cauchy's inequality, the bounds from Theorem \ref{loc_existence} and \eqref{bounds:sec4}  we have
\begin{align}\label{I2D:est}
 I_2^D &= \int_\Omega  e^{\chi_D v } \ad_t \chi_D \ad vm dx\\
 &= \frac{1}{4} \int_\Omega e^{\chi_D v} (\ad_t)^2 \, dx + \chi_D^2 \int_\Omega  e^{\chi_D v} (\ad)^2m^2 v^2\, dx  \nonumber\\
 &= \frac{1}{4} \int_\Omega e^{\chi_D v} (\ad_t)^2 \, dx + \C.
\end{align}
Analogously we obtain using \eqref{eq:memt_bound}
\begin{equation}\label{I3D:est}
 I_3^D = - \int_\Omega \memt \, \ad e^{\chi_D v } \ad_t \, dx \leq \frac{1}{4} \int_\Omega e^{\chi_D v} (\ad_t)^2 \, dx + \C.
\end{equation}
By Cauchy's inequality, the bounds from Theorem \ref{loc_existence} and \eqref{bounds:sec4}  we have
\begin{align}\label{I4D:est}
 I_4^D &= \int_\Omega e^{\chi_D v } \ad_t (\mu_D-\chi_D \mu_v\,v)\ad \rdev\, dx \\
 &\leq \C \int_\Omega e^{\chi_D v } | \ad_t | \, dx \nonumber\\
 &\leq \frac{1}{4}  \int_\Omega e^{\chi_D v } | \ad_t |^2 \, dx +  \C \nonumber.
\end{align}
Inserting \eqref{I1D:est}- \eqref{I4D:est} into \eqref{p41} we obtain
\begin{equation}\label{p411}
 \frac{1}{4} \int_\Omega e^{\chi_D v } (\ad_t)^2 \ dx + \frac{1}{2}\frac{d}{dt} \int_\Omega e^{\chi_D v}(| \nabla\ad|^2 )\, dx \leq \frac{\chi_D\mu_v}{2} \int_\Omega e^{\chi_D v} | \nabla\ad|^2  \, dx + \Cl{110},
\end{equation}
 which implies
\begin{equation}\label{p412}
 \frac{d}{dt} \int_\Omega e^{\chi_D v}(| \nabla\ad|^2 )\, dx \leq {\chi_D\mu_v}\int_\Omega e^{\chi_D v} | \nabla\ad|^2  \, dx + 2\Cr{110}.
\end{equation}
Applying Gronwall's lemma to \eqref{p412} implies
\begin{equation}\label{p413}
\int_\Omega e^{\chi_D v}(| \nabla\ad|^2 )\, dx \leq \C  e^{\chi_D\mu_v t}.
\end{equation}
Integrating both sides of \eqref{p411} in time and using \eqref{p413} gives 
\begin{equation}
  \int_0^t \int_\Omega e^{\chi_D v } (\ad_t)^2 \ dx\, ds \leq 4 \Cr{110} t + \C e^{\chi_D\mu_v t}.
\end{equation}
This completes the proof of the first line of \eqref{lem:41}.
The proof of the second line is obtained analogously by multiplying the equation for $\as$ in \eqref{eq:system_a} by $e^{\chi_S v } \as_t$  and integrating over $\Omega$.

\end{proof}

The following lemma relates $\| \nabla v(t)\|_{L^p(\Omega)}$ with  $\| \nabla \ad (t)\|_{L^p(\Omega)}$ and  $\| \nabla \as (t)\|_{L^p(\Omega)}.$

\begin{lem}
	Assume that $(\ad, \as, v, m) \in (\hoels{2}{1})^4$ is a solution of \eqref{eq:system_a}. 
	Then the following inequality holds
	\begin{equation}\label{eq:dvp_estim}
	\| \nabla v(t) \|_{L^p(\Omega)}^p \leq \C(T,p) \( \|\nabla \ad\|_{L^p(Q_T)}^p + \| \nabla \as\|_{L^p(Q_T)}^p + 1\) \quad \text{ for any } p >1.
	\end{equation}
\end{lem}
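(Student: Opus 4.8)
The plan is to treat the last equation of \eqref{eq:system_a} as a pointwise-in-$x$ linear ODE in time for each partial derivative $\partial_{x_i} v$, exactly as in the proof of Theorem \ref{thm:reg_local}. Differentiating that equation with respect to $x_i$ yields the scalar ODE \eqref{eq:dvdxiODE}, $(\partial_{x_i} v)_t = h_1\, \partial_{x_i} v - h_2$, with $h_1,h_2$ as in \eqref{eq:help1dvdxi}--\eqref{eq:help2dvdxi}; its explicit solution \eqref{eq:dvdxi_sol} expresses $\partial_{x_i} v$ as the initial value $\partial_{x_i} v_0$ multiplied by an exponential integrating factor, minus a time convolution of $h_2$ against that same factor.

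First I would bound the integrating factor from above. Since $m\ge 0$, $v\ge 0$, $\ad,\as\ge 0$ and $0\le v\le 1$, we have $\rdev\le 1$, so dropping all the non-positive contributions in \eqref{eq:help1dvdxi} gives the one-sided bound $h_1\le \mu_v$. Consequently every exponent occurring in \eqref{eq:dvdxi_sol} is at most $\mu_v T$, and each exponential factor is controlled by the constant $e^{\mu_v T}$; note that only an upper bound on $h_1$ is needed, its large negative values only helping.

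Next I would estimate $h_2$ from \eqref{eq:help2dvdxi}. Using $0\le v\le 1$, the boundedness of $e^{\chi_S v}$ and $e^{\chi_D v}$, and the bound $\|m(t)\|_{W^1_\infty(\Omega)}\le C$ supplied by \eqref{bounds:sec4}, the term $v\,\partial_{x_i} m$ is pointwise dominated by a constant, so that $|h_2|\le \C\big(1+|\nabla\ad|+|\nabla\as|\big)$. Feeding the two estimates into \eqref{eq:dvdxi_sol} gives the pointwise bound
\[
	|\partial_{x_i} v(x,t)|\le \C(T)\Big(|\partial_{x_i} v_0(x)| + \int_0^t |h_2(x,\tau)|\,d\tau\Big).
\]

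To finish I would pass to $L^p(\Omega)$. Raising to the $p$-th power and applying Jensen's (equivalently Hölder's) inequality in time, $\big(\int_0^t|h_2|\,d\tau\big)^p\le T^{p-1}\int_0^T |h_2|^p\,d\tau$, then integrating over $\Omega$ and invoking the bound on $|h_2|$, I obtain
\[
	\int_\Omega |\partial_{x_i} v(x,t)|^p\,dx \le \C(T,p)\Big(\|\partial_{x_i} v_0\|_{L^p(\Omega)}^p + \|\nabla\ad\|_{L^p(Q_T)}^p + \|\nabla\as\|_{L^p(Q_T)}^p + 1\Big).
\]
Since $v_0\in C^{2+l}(\bar\Omega)$, the initial-data term is a fixed constant absorbed into the trailing $1$; summing over $i=1,2$ (using $|\nabla v|^p\le \C\sum_i|\partial_{x_i} v|^p$ in $d=2$) then yields \eqref{eq:dvp_estim}. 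The only genuinely delicate points are the sign bookkeeping producing the one-sided estimate $h_1\le\mu_v$, which is what keeps the integrating factor bounded uniformly in $x$ on $[0,T]$, and the conversion of the temporal convolution into the space-time norm $\|\cdot\|_{L^p(Q_T)}$ via the Jensen/Hölder step; everything else is a routine consequence of the $L^\infty$ and $W^1_\infty$ bounds \eqref{bounds:sec4}.
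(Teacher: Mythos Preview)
Your argument is correct. Both you and the paper exploit the same two key ingredients---the one-sided bound $h_1\le\mu_v$ (equivalently $h_{100}\le\mu_v$) coming from the signs of $m,v,\ad,\as$, and the pointwise control $|h_2|\le \C(1+|\nabla\ad|+|\nabla\as|)$ via \eqref{bounds:sec4}---but organize the estimate differently. The paper multiplies the equation for $\nabla v$ by $p\,\nabla v\,|\nabla v|^{p-2}$ to obtain a pointwise differential inequality for $|\nabla v|^p$, integrates over $\Omega$, and applies Gronwall to $\int_\Omega|\nabla v|^p\,dx$. You instead work componentwise with the explicit Duhamel formula \eqref{eq:dvdxi_sol}, bound the integrating factor directly by $e^{\mu_v T}$, and then pass to $L^p$ via H\"older/Jensen in time. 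Your route is slightly more elementary (no Gronwall, just the ODE solution formula already used in Theorem~\ref{thm:reg_local}) and yields the same constant structure; the paper's energy-type formulation has the minor advantage of handling $|\nabla v|^p$ in one stroke without summing over coordinates. Either way, the substance is identical.
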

\begin{proof}
	We use the chain rule in \eqref{eq:system_a} to obtain
	\begin{equation}\label{p421}
	\nabla v_t=h_{100}\nabla v-\(v\nabla m + \mu_v v e^{\chi_S v}\nabla a^S + \mu_v v e^{\chi_D v}\nabla a^D\)
	\end{equation}
	with 
	\begin{equation}
	\label{p422}
	h_{100}= -m +\mu_v \rdev-\mu_v v e^{\chi_S v}\chi_S a^S  -\mu_v v e^{\chi_D v}\chi_D a^D. 
	\end{equation} 
	
	Further we use equation \eqref{p421} and multiply it by $p\nabla v |\nabla v|^{p-2}$.
	Employing \eqref{bounds:sec4}, the bounds from Theorem  \ref{loc_existence} and Young's inequality we obtain
	\begin{align}
	(|\nabla v|^p)_t &= h_{100} p |\nabla v|^p  - \Big( p v \nabla v \cdot  \nabla m |\nabla v|^{p-2} + p \mu_v v e^{\chi_S v}\nabla \as\cdot \nabla v |\nabla v|^{p-2} +  p \mu_v v e^{\chi_D v} \nabla \ad\cdot \nabla v |\nabla v|^{p-2} \Big)\nonumber\\
	&\leq  \mu_v p |\nabla v|^p + p \| \nabla m\|_{L^\infty(\Omega)} |\nabla v|^{p-1}  + p \mu_v e^{\chi_D} |\nabla \ad| |\nabla v|^{p-1}  + p \mu_v e^{\chi_S} |\nabla \as| |\nabla v|^{p-1}\nonumber\\
	&\leq \C |\nabla v|^p + \C |\nabla \ad|^p + \C |\nabla \as|^p + \C.\label{eq:451}
	\end{align}
By integration over $\Omega$ we get 
	\begin{equation}
	\frac{d}{dt} \int_{\Omega} |\nabla v|^p \, dx \leq \C \( \int_{\Omega} |\nabla v|^p \, dx + \int_{\Omega} |\nabla \ad|^p \, dx + \int_{\Omega} |\nabla \as|^p \, dx+1\),
	\end{equation}
	which yields also
	\begin{equation} \label{eq:dvp_ODE}
	\frac{d}{dt} \(\int_{\Omega} |\nabla v|^p \, dx + 1\) \leq \C \( \int_{\Omega} |\nabla \ad|^p \, dx + \int_{\Omega} |\nabla \as|^p \, dx+ 1 \) \( \int_{\Omega} |\nabla v|^p \, dx+ 1\).
	\end{equation}
	The estimate \eqref{eq:dvp_estim} follows by the Gronwall Lemma applied to \eqref{eq:dvp_ODE}.
\end{proof}

Our next lemma provides ${L^4(Q_T)}$-bounds for $\nabla \ad$, $\nabla \as$ which only depend on $T$, thereby ruling out finite time blowup of these norms.
\begin{lem}
 Assume that $(\ad, \as, v, m) \in (\hoels{2}{1})^4$ is a solution of \eqref{eq:system_a}
 . Then the following inequalities are satisfied
 \begin{equation}\label{lem:43}
  \int_0^T \| \Delta \ad(t) \|_{L^2(\Omega)}^2\, dt  \leq \C(T) \quad \text{and} \quad \int_0^T \| \Delta \as(t) \|_{L^2(\Omega)}^2\, dt  \leq \C(T)
  \end{equation}
as well as
\begin{equation}\label{lem:46}
	\int_0^T \| \nabla \ad(t) \|_{L^4(\Omega)}^4\, dt  \leq \C(T) \quad \text{and} \quad \int_0^T \| \nabla \as(t) \|_{L^4(\Omega)}^4\, dt  \leq \C(T)
\end{equation}
\end{lem}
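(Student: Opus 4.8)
The plan is to derive both \eqref{lem:43} and \eqref{lem:46} from one coupled energy estimate obtained by testing the equations for $\ad$ and $\as$ with their own Laplacians. The subtlety is that the two target bounds, together with an $L^4$-bound on $\nabla v$, are \emph{a priori} intertwined, so the real work is to close them simultaneously rather than one after the other.

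First I would rewrite the first equation of \eqref{eq:system_a} in non-divergence form, $\ad_t - \Delta \ad - \chi_D \nabla v \cdot \nabla \ad = f^D$ with $f^D := \chi_D \ad v m - \memt\,\ad + (\mu_D - \chi_D\mu_v v)\ad\,\rdev$, noting that $f^D$ is bounded in $\linf$ by the $L^\infty$-bounds of Section \ref{sec:apriori}, \eqref{bounds:sec4}, \eqref{eq:memt_bound} and $0\le v\le 1$; the analogous reformulation holds for $\as$, whose source carries the additional bounded term $\memt\,\ad$. Testing with $-\Delta\ad$ and integrating by parts via $\partial_\nu\ad=0$ gives
\[
\tfrac12\tfrac{d}{dt}\|\nabla\ad\|_{L^2(\Omega)}^2 + \|\Delta\ad\|_{L^2(\Omega)}^2 \leq \int_\Omega\bigl(\chi_D|\nabla v|\,|\nabla\ad| + |f^D|\bigr)\,|\Delta\ad|\,dx .
\]
The bounded source is absorbed by Young's inequality into $\tfrac14\|\Delta\ad\|_{L^2(\Omega)}^2$ plus a constant. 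The dangerous contribution is the haptotaxis cross term, which I would estimate by $\|\nabla v\|_{L^4(\Omega)}^2\|\nabla\ad\|_{L^4(\Omega)}^2$ and then control through the two-dimensional Gagliardo--Nirenberg inequality combined with elliptic $H^2$-regularity for the Neumann Laplacian, in the form $\|\nabla\ad\|_{L^4(\Omega)}^2 \leq \C\|\Delta\ad\|_{L^2(\Omega)} + \C$, where I use $\|\ad\|_{\linfo}\leq\C$ and the $L^2$-gradient bound \eqref{lem:41}. A last Young step and the analogous estimate for $\as$ then yield, after fixing the absorbing constants,
\[
\tfrac12\tfrac{d}{dt}\bigl(\|\nabla\ad\|_{L^2(\Omega)}^2 + \|\nabla\as\|_{L^2(\Omega)}^2\bigr) + \tfrac12\bigl(\|\Delta\ad\|_{L^2(\Omega)}^2 + \|\Delta\as\|_{L^2(\Omega)}^2\bigr) \leq \C\|\nabla v\|_{L^4(\Omega)}^4 + \C .
\]

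The main obstacle is precisely this $\|\nabla v\|_{L^4(\Omega)}^4$ term: applying the preceding lemma \eqref{eq:dvp_estim} in its stated $Q_T$-form would only give the circular pair $B\leq\C(T)(D+1)$, $D\leq\C B+\C(T)$ (with $B,D$ the quantities in \eqref{lem:43}, \eqref{lem:46}), which carries no smallness and does not close. To break the loop I would use the \emph{time-localized} estimate hidden in the proof of that lemma: integrating \eqref{eq:451} with $p=4$ over $\Omega$ and applying Gronwall gives $\|\nabla v(t)\|_{L^4(\Omega)}^4 \leq \C(T)\bigl(1 + \int_0^t(\|\nabla\ad\|_{L^4(\Omega)}^4 + \|\nabla\as\|_{L^4(\Omega)}^4)\,ds\bigr)$, and a second use of Gagliardo--Nirenberg on the integrand turns this into $\|\nabla v(t)\|_{L^4(\Omega)}^4 \leq \C(T)(1+G(t))$ with $G(t):=\int_0^t(\|\Delta\ad\|_{L^2(\Omega)}^2+\|\Delta\as\|_{L^2(\Omega)}^2)\,ds$.

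Finally I would insert this into the energy inequality, integrate in $t$, discard the nonnegative terms $\|\nabla\ad(t)\|_{L^2(\Omega)}^2+\|\nabla\as(t)\|_{L^2(\Omega)}^2$ and bound the data through \eqref{eq:iniData}, arriving at $G(t)\leq \C(T) + \C(T)\int_0^t G(s)\,ds$; Gronwall's lemma then gives $G(T)\leq\C(T)$, which is exactly \eqref{lem:43}. The estimate \eqref{lem:46} follows immediately from one more application of $\|\nabla\ad\|_{L^4(\Omega)}^4 \leq \C\|\Delta\ad\|_{L^2(\Omega)}^2 + \C$ integrated over $(0,T)$. I expect the only delicate points to be the time-localization of the $\nabla v$-bound and careful bookkeeping of the absorbing constants in the Gagliardo--Nirenberg and Young steps; the remainder is routine parabolic energy estimation.
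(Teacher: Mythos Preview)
Your argument is correct, but it proceeds by a genuinely different route from the paper. The paper does \emph{not} test with $-\Delta\ad$; instead it uses the bound $\|\ad_t\|_{L^2(Q_t)}\leq\C(T)$ already obtained in \eqref{lem:41} and reads off directly from the equation that $\int_0^t\|\Delta\ad\|_{L^2}^2\,ds\leq\C(T)+2\chi_D^2\int_0^t\|\nabla v\cdot\nabla\ad\|_{L^2}^2\,ds$. The crucial device the paper uses to close this is a Cauchy--Schwarz estimate \emph{in the time variable}, which produces a factor $\sqrt{t}$ in front of the dangerous term; together with the time-localized $\nabla v$-bound and the same Gagliardo--Nirenberg interpolation you use, this yields an inequality of the form $(1-\C\sqrt{t}\,e^{\C T})G(t)\leq\C(T)$, which is solved by choosing $t_1=t_1(T)$ small and then iterating finitely many times across $[0,T]$. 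Your approach replaces this ``smallness plus iteration'' mechanism by a single Gronwall step on $G(t)$, which is cleaner and avoids the bookkeeping of the time-stepping; the price is that you must perform the energy estimate from scratch (testing with $-\Delta\ad$) rather than reusing \eqref{lem:41}, and you must track the absorbing constants through three successive Young inequalities. Both strategies hinge on the same two ingredients you correctly identify: the 2D Gagliardo--Nirenberg bound $\|\nabla\ad\|_{L^4}^2\leq\C\|\Delta\ad\|_{L^2}+\C$ (with the $L^2$-gradient bound from \eqref{lem:41}) and the time-localized form of \eqref{eq:dvp_estim}.
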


\begin{proof}
 Due to the bounds in Theorem \ref{loc_existence} and \eqref{bounds:sec4} we may rewrite the equations for $\ad$, $\as$ of \eqref{eq:system_a} as
 \begin{align}
  \ad_t &= \Delta \ad + \chi_D \nabla v \cdot \nabla \ad + h_{101}, \label{eq:ad}\\
  \as_t &= \Delta \as + \chi_S \nabla v \cdot \nabla \as + h_{102},
 \end{align}
with 
\begin{align}
 \|h_{101}\|_{L^\infty(\Omega)} = \| \chi_D \ad vm - \memt \, \ad 
					+ (\mu_D-\chi_D \mu_v\,v)\ad \rdev\|_{L^\infty(\Omega)} \leq \C, \label{eq:h101}\\
 \|h_{102}\|_{L^\infty(\Omega)} = \| \chi_S \as vm + \memt \, \ad 
					+ (\mu_S-\chi_S \mu_v\,v)\as \rdev\|_{L^\infty(\Omega)} \leq \C.
\end{align}

From equations \eqref{eq:ad}, \eqref{eq:h101} and the estimate \eqref{lem:41} we get for any $0 \leq t \leq T$
 \begin{align}\label{dad:est}
 \int_0^t \| \Delta \ad(s) \|_{L^2(\Omega)}^2\, ds  \leq \C T^2 + \C e^{2 \chi_D T} + 2 \chi_D^2 \int_0^t \| \nabla v \cdot \nabla \ad\|_{L^2(\Omega)}^2\, ds ,\\
 \label{das:est}
 \int_0^t \| \Delta \as(s) \|_{L^2(\Omega)}^2\, ds  \leq \C T^2 + \C e^{2 \chi_S T} + 2 \chi_S^2 \int_0^t \| \nabla v \cdot \nabla \as\|_{L^2(\Omega)}^2\, ds .
 \end{align}
 
 The last term on the right hand side needs to be estimated further.
 Using Hölder's inequality, equation \eqref{eq:dvp_estim} for $p=4$ and 
 \[ \sqrt{ y + z} \leq \sqrt{y} + \sqrt{z}  \quad \forall y, z \geq 0\]
 we obtain the following  estimate for $I \in \{ S,D\}$ and $J\in \{ S,D\}\setminus\{I\}$ for all $0 \leq t \leq T$
 \begin{align}\label{nav:est}
  \int_0^t \| \nabla v \cdot \nabla \ai\|_{L^2(\Omega)}^2\, ds &\leq \int_0^t \| \nabla v\|_{L^4(\Omega)}^2 \| \nabla \ai\|_{L^4(\Omega)}^2\, ds\\
		  &= \int_0^t \Big(\| \nabla v\|_{L^4(\Omega)}^4 \| \nabla \ai\|_{L^4(\Omega)}^4\Big)^{1/2} \, ds\nonumber\\
		  &\leq\Big(\int_0^t 1 \, ds\Big)^{1/2}  \Big(\int_0^t \| \nabla v\|_{L^4(\Omega)}^4 \| \nabla \ai\|_{L^4(\Omega)}^4 \, ds\Big)^{1/2}\nonumber\\
		  &=\sqrt{t} \Big(\int_0^t \| \nabla v\|_{L^4(\Omega)}^4 \| \nabla \ai\|_{L^4(\Omega)}^4 \, ds\Big)^{1/2}\nonumber\\
		  &\leq\sqrt{t} \Big(\int_0^t  e^{\Cl{113} t} \left[ \Cl{114} + \Cl{115} \int_0^s \| \nabla \ai \|_{L^4(\Omega)}^4 +  \| \nabla \aj \|_{L^4(\Omega)}^4\, d\tau \right] \| \nabla \ai\|_{L^4(\Omega)}^4 \, ds\Big)^{1/2}\nonumber\\	
		  &=\sqrt{t} \Big\{\int_0^t \Big[ \Cr{114}  e^{\Cr{113} T} \| \nabla \ai\|_{L^4(\Omega)}^4  + \frac{\Cr{115}}{2}  e^{\Cr{113} T} \frac{d}{ds} \Big( \int_0^s \| \nabla \ai \|_{L^4(\Omega)}^4 \, d\tau\Big)^2\Big]\, ds\nonumber\\
		   & \quad     + \Cr{115} e^{\Cr{113} T} \int_0^t \| \nabla \ai \|_{L^4(\Omega)}^4 \, ds \int_0^t \| \nabla \aj \|_{L^4(\Omega)}^4 \, ds \Big\}^{1/2}\nonumber \\
		   &\leq \sqrt{t} \Big\{ \Cr{114}  e^{\Cr{113} T} \int_0^t \| \nabla \ai\|_{L^4(\Omega)}^4 \, ds + \Cr{115}  e^{\Cr{113} T} \Big( \int_0^t \| \nabla \ai \|_{L^4(\Omega)}^4 \, ds\Big)^2\nonumber\\
		   & \quad   + \Cr{115}  e^{\Cr{113} T} \Big( \int_0^t \| \nabla \aj \|_{L^4(\Omega)}^4 \, ds\Big)^2   \Big\}^{1/2}\nonumber \\
 		   &\leq \sqrt{t} \sqrt{\Cr{114}}  e^{\frac{\Cr{113}}{2} T} \Big(\int_0^t \| \nabla \ai\|_{L^4(\Omega)}^4 \, ds\Big)^{1/2} + \sqrt{t}  \sqrt{\Cr{115}}  e^{\frac{\Cr{113}}{2} T} \int_0^t \| \nabla \ai \|_{L^4(\Omega)}^4 \, ds\nonumber\\
 		   & \quad   +\sqrt{t}  \sqrt{\Cr{115}}  e^{\frac{\Cr{113}}{2} T}  \int_0^t \| \nabla \aj \|_{L^4(\Omega)}^4 \, ds\nonumber \\
 		   &\leq  \sqrt{t} (\sqrt{\Cr{114}} + \sqrt{\Cr{115}} ) e^{\frac{\Cr{113}}{2} T} \int_0^t \| \nabla \ai \|_{L^4(\Omega)}^4 \, ds
 		   + \sqrt{T} \sqrt{\Cr{114}}  e^{\frac{\Cr{113}}{2} T}\nonumber\\
 		   & \quad   + \sqrt{t} \sqrt{\Cr{115}}  e^{\frac{\Cr{113}}{2} T}  \int_0^t \| \nabla \aj \|_{L^4(\Omega)}^4 \, ds\nonumber.
 \end{align}
Since we consider the case of two space dimensions, the Gagliardo-Nirenberg inequality, and the estimate $\|D^2 w \|_{L^2(\Omega)} \leq C \|\Delta w\|_{L^2(\Omega)}$ 
for any $w \in H^2(\Omega)$ with $\frac{\partial w}{\partial \nu} =0$ on $\partial \Omega$ imply the following inequalities for 
any $0 \leq t \leq T:$
\begin{align}\label{nad:est}
 \int_0^t \|\nabla  \ad \|_{L^4(\Omega)}^4 \, ds &\leq \C e^{2 \chi_D \mu_v T} \int_0^t \|\Delta  \ad \|_{L^2(\Omega)}^2 \, ds + \C T  e^{4 \chi_D \mu_v T},\\
 \label{nas:est}
  \int_0^t \|\nabla  \as \|_{L^4(\Omega)}^4 \, ds &\leq \C e^{2 \chi_S \mu_v T} \int_0^t \|\Delta \as \|_{L^2(\Omega)}^2 \, ds + \C T  e^{4 \chi_S \mu_v T}.
\end{align}
Inserting \eqref{nad:est} and \eqref{nas:est} into \eqref{nav:est} we obtain
 \begin{multline}\label{nav:est2}
  \int_0^t \| \nabla v \cdot \nabla \ai\|_{L^2(\Omega)}^2\, ds \\ 
  \leq \sqrt{t} \Cl{123} e^{\Cl{124} T} \Big( \int_0^t \|\Delta \ad \|_{L^2(\Omega)}^2 \, ds + \int_0^t \|\Delta \as \|_{L^2(\Omega)}^2 \, ds\Big)+
  \C \sqrt{T} (1 + T) e^{\C T}.
  \end{multline} 
  By taking the maximum of the constants in the individual estimates of $\ad$ and $\as$, we obtain the same constants in \eqref{nav:est2}.  Inserting \eqref{nav:est2} into \eqref{dad:est} implies
   \begin{align}\label{dad:est1}
 \int_0^t \| \Delta \ad \|_{L^2(\Omega)}^2\, ds  \leq  2 \sqrt{t} \chi_D^2 \Cr{123}e^{\Cr{124} T} \Big( \int_0^t \|\Delta \ad \|_{L^2(\Omega)}^2 \, ds + \int_0^t \|\Delta \as \|_{L^2(\Omega)}^2 \, ds\Big) + \C(T) \\
 \label{dad:est2}
 \int_0^t \| \Delta \as \|_{L^2(\Omega)}^2\, ds  \leq 2 \sqrt{t} \chi_S^2 \Cr{123}e^{\Cr{124} T} \Big( \int_0^t \| \Delta\ad \|_{L^2(\Omega)}^2 \, ds + \int_0^t \| \Delta\as \|_{L^2(\Omega)}^2 \, ds\Big) + \C(T).
 \end{align}
Adding the two estimates above yields
\begin{multline}
 \label{suma:est}
 \int_0^t \| \Delta \ad \|_{L^2(\Omega)}^2\, ds + \int_0^t \| \Delta \as \|_{L^2(\Omega)}^2\, ds\\
 \leq   \sqrt{t}( \chi_D^2 + \chi_S^2)  \Cr{123}e^{\Cr{124} T} \Big( \int_0^t \|\Delta \ad \|_{L^2(\Omega)}^2 \, ds + \int_0^t \|\Delta \as \|_{L^2(\Omega)}^2 \, ds\Big) + \C(T) ,
\end{multline}
so that
\begin{equation}
 \Big( 1 - 2 \sqrt{t} ( \chi_D^2 + \chi_S^2) \Cr{123}e^{\Cr{124} T}\Big)  \Big( \int_0^t \| \Delta\ad \|_{L^2(\Omega)}^2 \, ds + \int_0^t \| \Delta\as \|_{L^2(\Omega)}^2 \, ds\Big) \leq \C(T).
\end{equation}
Choosing 
\[ t_1 =t_1 (T) = \frac{1}{\Big( 4( \chi_D^2 + \chi_S^2) \Cr{123}e^{\Cr{124} T}\Big)^2}\]
we obtain
\begin{equation} \label{suma:est2}
	\int_0^t \| \Delta\ad \|_{L^2(\Omega)}^2 \, ds + \int_0^t \| \Delta \as \|_{L^2(\Omega)}^2 \, ds \leq 2 \C(T) \quad \text{for all } 0 \leq t \leq t_1\,.
\end{equation}
If $t_1(T) \geq T$ we have completed the proof of the lemma. If $t_1(T) < T$ we may repeat the procedure described above by taking $t_0=t_1(T)$ as new initial datum. Since $t_1(T)$ only depends on $T$  we can extend the estimate
 \eqref{suma:est} to the whole time interval $[0,T]$ after finitely many steps. This completes the proof of \eqref{lem:43}. The bounds now \eqref{lem:46} follow by combing \eqref{lem:43} and \eqref{nad:est},\eqref{nas:est}.
\end{proof}

We are now in position to state the main result of this section, i.e., $\| \nabla v(\cdot) \|_{L^4(\Omega)}$ does not blow up in finite time.

\begin{lem}
	Assume that $(\ad, \as, v, m) \in (\hoels{2}{1})^4$ is a solution of \eqref{eq:system_a}. Then the following inequality is fulfilled
	\begin{equation}\label{lem:45}
	\| \nabla v(t) \|_{L^4(\Omega)} \leq \C(T)\, .
	\end{equation}
\end{lem}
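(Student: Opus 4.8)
The plan is to obtain \eqref{lem:45} directly as a corollary of the two preceding lemmas, so that essentially all of the analytic effort has already been expended. First I would invoke the estimate \eqref{eq:dvp_estim} with the specific choice $p=4$, which yields
\begin{equation*}
\| \nabla v(t) \|_{L^4(\Omega)}^4 \leq \C(T,4) \( \|\nabla \ad\|_{L^4(Q_T)}^4 + \| \nabla \as\|_{L^4(Q_T)}^4 + 1\),
\end{equation*}
valid for every $t \in (0,T)$ with a prefactor that does not depend on $t$. This reduces the task to controlling the two space-time norms appearing on the right-hand side.

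Next I would identify these norms with the quantities already bounded in the previous lemma. By definition $\|\nabla \ad\|_{L^4(Q_T)}^4 = \int_0^T \| \nabla \ad(s) \|_{L^4(\Omega)}^4\, ds$, and the analogous identity holds for $\as$. The bounds \eqref{lem:46} give precisely $\int_0^T \| \nabla \ad(s) \|_{L^4(\Omega)}^4\, ds \leq \C(T)$ and $\int_0^T \| \nabla \as(s) \|_{L^4(\Omega)}^4\, ds \leq \C(T)$. Substituting these into the displayed inequality bounds $\| \nabla v(t) \|_{L^4(\Omega)}^4$ by a constant depending only on $T$, and taking the fourth root then delivers \eqref{lem:45}.

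There is no genuine obstacle at this stage beyond bookkeeping: the difficult part — ruling out finite-time blow-up of the space-time $L^4$ norms of $\nabla \ad$ and $\nabla \as$ through the coupled Gronwall argument and the iterated continuation on subintervals of length $t_1(T)$ carried out in the proof of \eqref{lem:43}--\eqref{lem:46} — has already been settled. The only point deserving a moment's care is that the resulting constant be uniform in $t \in (0,T)$; this is automatic, since the right-hand side of \eqref{eq:dvp_estim} involves the full $Q_T$ integrals (which dominate the corresponding $Q_t$ integrals for $t\le T$) and the coefficient $\C(T,4)$ is independent of $t$.
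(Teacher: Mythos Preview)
Your proof is correct and follows exactly the same approach as the paper, which simply states that the result follows by combining \eqref{lem:46} with \eqref{eq:dvp_estim} for $p=4$. Your additional remark about uniformity in $t$ is a reasonable clarification but not a new idea.
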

\begin{proof}
	Follows directly by combining \eqref{lem:46} with \eqref{eq:dvp_estim}.
\end{proof}


\section{Proof of the global existence Theorem \ref{thm:global.existence}}\label{sec:global}
In this section we show existence and uniqueness of classical solutions of \eqref{eq:system_a} based on the local well-posedness results and a priori estimates from the previous sections.
We begin by establishing uniform in time bounds for $\|\ad(\cdot)\|_{C^2(\Omega)}, \|\as(\cdot)\|_{C^2(\Omega)}, \|v(\cdot)\|_{C^1(\Omega)}, \|m(\cdot)\|_{C^2(\Omega)}$.

\begin{lem}\label{lem:final_a_priori}
Let $(\ad,\as,v,m)\in \(\mathcal C^{2,1}(Q_T)\)^4$ be a solution of  \eqref{eq:system_a}, and
let \eqref{eq:mud_bound} hold.
	Then for all $t \in (0,T)$
		\begin{equation} \label{eq:final_a_priori}
		\|\ad(t)\|_{C^2(\Omega)}, \|\as(t)\|_{C^2(\Omega)}, \|v(t)\|_{C^1(\Omega)}, \|m(t)\|_{C^2(\Omega)} \leq \C(T).
		\end{equation}
\end{lem}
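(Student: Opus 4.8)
The plan is to upgrade the a priori bounds already in hand to $C^2$-in-space control by a regularity bootstrap built on parabolic $L^p$ theory and the Schauder estimates, following the scheme of Theorem~\ref{thm:reg_local} but now feeding it the \emph{global} (finite-on-$[0,T]$) bounds of Sections~\ref{sec:apriori} and~\ref{sec:apriori-v}, so that every constant depends only on $T$. The inputs I would start from are: $\|\ad\|_{\linf}$, $\|\as\|_{\linf}$, $\|m\|_{W^1_\infty(\Omega)}\le C$ from \eqref{eq:a_linfty_bounds} and \eqref{bounds:sec4}; the energy bounds $\nabla\ad,\nabla\as\in L^\infty(0,T;L^2(\Omega))$ together with $\Delta\ad,\Delta\as\in L^2(Q_T)$ and $\nabla\ad,\nabla\as\in L^4(Q_T)$ from \eqref{lem:41}, \eqref{lem:43}, \eqref{lem:46}; the gradient bound $\|\nabla v(t)\|_{L^4(\Omega)}\le C(T)$ from \eqref{lem:45}; and the key coupling \eqref{eq:dvp_estim}, which controls $\|\nabla v(t)\|_{L^p(\Omega)}$ by $\|\nabla\ad\|_{L^p(Q_T)}+\|\nabla\as\|_{L^p(Q_T)}$ for every $p>1$.

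First I would use the nondivergence form \eqref{eq:ad} of the $\ad$- and $\as$-equations, with bounded source terms as in \eqref{eq:h101}, and treat $\chi_D\nabla v$ as a given drift coefficient lying in $L^\infty(0,T;L^4(\Omega))$. The heart of the argument is to raise the integrability of $\nabla\ad,\nabla\as$ until $\ad,\as\in W^{2,1}_p(Q_T)$ for some $p>d+2=4$. The mechanism is a finite iteration: given $\nabla\ad,\nabla\as\in L^q(Q_T)$, estimate \eqref{eq:dvp_estim} promotes $\nabla v$ to $L^\infty(0,T;L^q(\Omega))$; the product $\chi_D\nabla v\cdot\nabla\ad$ then lies in a mixed-norm Lebesgue space whose exponent strictly exceeds $q$, precisely because $\nabla v$ carries full integrability in time; Theorem~\ref{thm:parabolic_lp} and the embedding Theorem~\ref{thm:sob_embed_hoelder} then return $\nabla\ad,\nabla\as$ in a strictly larger space. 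Here the fact that $d=2$ and that the exponent $4$ in \eqref{lem:45} lies strictly above the space dimension is decisive: it is exactly what lets the iteration cross the threshold $p>4$ after finitely many steps instead of stalling. Once $\ad,\as\in W^{2,1}_p(Q_T)$ with $p>4$, Theorem~\ref{thm:sob_embed_hoelder} gives $\ad,\as\in\hoeloh{\lambda}$ for some $\lambda\in(0,1)$, so $\nabla\ad,\nabla\as$ are bounded and Hölder continuous, with constants depending only on $T$.

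With $\nabla\ad,\nabla\as$ Hölder in hand, the remaining steps mirror Theorem~\ref{thm:reg_local}. The $m$-equation in \eqref{eq:system_a} has a Hölder right-hand side $e^{\chi_S v}\as+e^{\chi_D v}\ad$, so Theorem~\ref{thm:parabolic_hoel} yields $m\in\hoels{2+l}{1+l/2}$ and, in particular, $\nabla m$ Hölder. Feeding $\nabla\ad,\nabla\as,\nabla m$ into the scalar ODEs \eqref{eq:dvdxiODE}--\eqref{eq:dvdxi_sol} for the components of $\nabla v$ shows, exactly as in the derivation of \eqref{eq:dvdxi_in_hoel}, that $\nabla v$ is Hölder continuous, giving $\|v(t)\|_{C^1(\Omega)}\le C(T)$. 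Finally, returning to the nondivergence equation \eqref{eq:ad_pde} for $\ad$ (and its analogue for $\as$), the drift coefficient $\nabla v$ and the zeroth-order and source terms are now Hölder, so a last application of Theorem~\ref{thm:parabolic_hoel} places $\ad,\as$ in $\hoels{2+l}{1+l/2}$ and hence bounds $\|\ad(t)\|_{C^2(\Omega)}$, $\|\as(t)\|_{C^2(\Omega)}$ by $C(T)$; combined with the $m$- and $v$-bounds this is exactly \eqref{eq:final_a_priori}. Every invoked constant (the maximal-regularity and Schauder constants on $Q_T$, together with the a priori bounds of the previous sections) is finite for fixed $T$, so the resulting estimate depends only on $T$.

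I expect the main obstacle to be the integrability bootstrap of the second paragraph: isotropic $L^p(Q_T)$ estimates alone gain nothing, since the product of two $L^4(Q_T)$ gradients is only $L^2(Q_T)$ and $W^{2,1}_2(Q_T)$ returns $\nabla\ad$ merely in $L^4(Q_T)$, a fixed point of the iteration. One must therefore exploit the mixed space--time integrability $\nabla v\in L^\infty(0,T;L^4(\Omega))$ through mixed-norm parabolic estimates and the coupling \eqref{eq:dvp_estim}, and then check carefully that the gained exponents do increase past $4$ in finitely many steps. The coupling of the two cancer species through the EMT term $\memt\,\ad$ in the $\as$-equation must be carried along as well, but it is a bounded lower-order contribution and does not affect the bootstrap.
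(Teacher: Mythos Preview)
Your overall architecture---use parabolic $L^p$ regularity to reach $W^{2,1}_p$ for large $p$, embed into H\"older, then rerun the Schauder argument of Theorem~\ref{thm:reg_local}---matches the paper exactly, and your treatment of $v$ and $m$ in the final paragraph is essentially identical to the paper's. The difference lies in how the integrability threshold $p>4$ is crossed, and here the paper is considerably simpler than what you propose.

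You anticipate an iteration that stalls at $L^4$ and suggest breaking it with mixed-norm parabolic estimates; the paper sidesteps this entirely. It keeps $\chi_D\nabla v$ as a first-order drift coefficient (not moved to the source side), records that $\|\nabla v(t)\|_{L^4(\Omega)}\le C(T)$ from \eqref{lem:45}, and applies Theorem~\ref{thm:parabolic_lp} once with $p=4$ to obtain $\ad,\as\in W^{2,1}_4(Q_T)$ directly. The ingredient you overlooked is Theorem~\ref{thm:sob_embed_gradient}: in two space dimensions the anisotropic Sobolev embedding takes gradients of $W^{2,1}_4(Q_T)$ functions into $L^p(Q_T)$ for \emph{every} finite $p$ (this is precisely the borderline exponent $q=d+2=4$). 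One step, no iteration, no mixed-norm machinery. With $\nabla\ad,\nabla\as\in L^p(Q_T)$ for all $p$, estimate \eqref{eq:dvp_estim} yields $\nabla v\in L^\infty(0,T;L^p(\Omega))$ for all $p$, and a second application of Theorem~\ref{thm:parabolic_lp} with arbitrarily large $p$ then puts $\ad,\as,m\in W^{2,1}_p(Q_T)$; Theorem~\ref{thm:sob_embed_hoelder} and the Schauder step proceed exactly as you describe from there.

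So your route would work in principle, but the mixed-norm detour you flag as the ``main obstacle'' is unnecessary: the paper's embedding theorem dissolves the $L^4$ fixed point in one line.
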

\begin{proof}
Using \eqref{eq:system_a} we can rewrite the equations for $\ad$ and $\as$ as
\begin{align}
	\ad_t &= \Delta \ad + \chi_D \nabla v \cdot \nabla \ad + h_{200}\ad \quad \text{in }\Omega \times (0,T), \label{eq:ad_para_final}\\
	\as_t &= \Delta \ad + \chi_S \nabla v \cdot \nabla \as + h_{201}\as + \memt \, \ad \quad \text{in }\Omega \times (0,T), \label{eq:as_para_final}
\end{align}
where 
\begin{align}
	h_{200} &= -\memt+(\mu_D - \chi_D \mu_v v) \rdev + \chi_D v m, \\
	h_{201} &= (\mu_S - \chi_S \mu_v v) \rdev - \chi_S v m.
\end{align}
By employing \eqref{eq:dvp_estim} for $p=4$, $0\leq v \leq 1,$ \eqref{bounds:sec4}, and \eqref{eq:memt_bound} we have
\begin{equation} \label{eq:para_coeff_lp_final}
	\|\nabla v(t)\|_{L^4(\Omega)},~\|h_{200}(t)\|_\linfo, ~\|h_{201}(t)\|_\linfo, ~ \| \memt\, \ad(t)\|_\linfo \leq \C(T). 
\end{equation}
This allows us to use the maximal parabolic regularity result in $L^p$, see \ref{thm:parabolic_hoel}, for both equations \eqref{eq:ad_para_final}, \eqref{eq:as_para_final} to obtain
\begin{equation}
	\| \ad \|_{W^{2,1}_4(Q_T)}, \| \as \|_{W^{2,1}_4(Q_T)} \leq \Cl{A9}(T).
\end{equation}
Thanks to the Sobolev embedding \ref{thm:sob_embed_gradient} we get for all $p$ a constant $\Cl{152}(p)$ such that
\begin{equation}
	\|\nabla \ad \|_{L^p(Q_T)}, \|\nabla \as \|_{L^p(Q_T)} \leq \Cr{152}(p) \Cr{A9}(T)\quad \text{for all } p>1,
\end{equation}
which yields together with \eqref{eq:dvp_estim} that
\begin{equation} \label{eq:nabvpestim}
	\|\nabla v(t) \|_{L^p(\Omega)} \leq \C(p,T) \quad \text{for all } p>1.
\end{equation}
Using Theorem \ref{thm:parabolic_hoel} again for \eqref{eq:ad_para_final}, \eqref{eq:as_para_final} together with \eqref{eq:para_coeff_lp_final} and \eqref{eq:nabvpestim}, we get
\begin{equation} \label{eq:sobbound_a}
	\| \ad \|_{W^{2,1}_p(Q_T)}, \| \as \|_{W^{2,1}_p(Q_T)} \leq \C(p,T) \quad \text{for all } p>1.
\end{equation}
Moreover, applying \ref{thm:parabolic_hoel} again in equation for $m$ in \eqref{eq:system_a} we obtain
\begin{equation} \label{eq:sobbound_m}
	\| m \|_{W^{2,1}_p(Q_T)} \leq \C(p) \quad \text{for all } p>1.
\end{equation}
Applying the Sobolev embedding \ref{thm:sob_embed_hoelder} to \eqref{eq:sobbound_a}, \eqref{eq:sobbound_m} for a fixed $p> 5$ yields for $\lambda = 1 - 5/p$
\begin{equation} \label{eq:final_hoelder_estim}
	\| \ad \|_\hoeloh{\lambda}, ~ \| \as \|_\hoeloh{\lambda}, ~ \| m \|_\hoeloh{\lambda} \leq \C(T).
\end{equation}
By considering $0\leq v \leq1$, the equation for $\as$ in \eqref{eq:system_a} together with \eqref{eq:final_hoelder_estim} as well as \eqref{eq:dvdxi_sol}, \eqref{eq:help1dvdxi}, and \eqref{eq:help2dvdxi} with \eqref{eq:final_hoelder_estim} we get
\begin{equation} \label{eq:vc11}
	\|v \|_\coo \leq \C(T).
\end{equation}
Using now the same arguments as in the proof of Theorem \ref{thm:reg_local} we obtain
\begin{equation} \label{eq:final_c2_estim}
\| \ad \|_{C^{2+\lambda,1+\lambda/2}(\bar Q_{T})}, ~ \| \as \|_{C^{2+\lambda,1+\lambda/2}(\bar Q_{T})}, ~ \| m \|_{C^{2+\lambda,1+\lambda/2}(\bar Q_{T})} \leq \C(T).
\end{equation}
Estimate \eqref{eq:final_a_priori} follows from \eqref{eq:final_c2_estim} and \eqref{eq:vc11}.
\end{proof}


Finally we can prove the existence and uniqueness of the global classical solutions, as stated in the main Theorem \ref{thm:global.existence}. 

\begin{proof}[Proof of the main Theorem \ref{thm:global.existence}]
	Due to the equivalence of \eqref{eq:reduced_II} and \eqref{eq:system_a} the proof is a consequence of Theorem \ref{loc_existence}, Theorem \ref{thm:reg_local} and Lemma \ref{lem:final_a_priori}.
	Indeed we know that there exist (regular) local-in-time solutions due to Theorem \ref{loc_existence} and Theorem \ref{thm:reg_local}. If they only existed until some maximal final time $T_{max}< \infty$,
	then the a priori bounds in Lemma \ref{lem:final_a_priori} would enable us to use Theorem \ref{loc_existence} in order to extend the solution beyond $T_{max}$ and Theorem  \ref{thm:reg_local} would
	ensure the regularity of this extension.
	This shows that there cannot be a finite maximal time of existence.
\end{proof}

\appendix
\section{Parabolic theory}
We consider the problem
\begin{align}
	u_t - D \Delta u + \sum_{i=1}^{d} a_i \frac{\partial u}{\partial_{x_i}} + au &= f \text{ in } Q_T, \label{eq:lin_parabolic}\\
	\partial_\nu u &= 0 \text{ on }\partial \Omega \times (0,T), \\
	u(\cdot,0) &= u_0 \text{ in } \Omega, \label{eq:lin_parabolic_ini}
\end{align}
where $D\in \mathbb{R}^+$, and $a, a_i$ are real valued functions in $Q_T$.
For the initial condition we assume for a fixed $\lambda \in (0,1)$ that
\begin{equation}
	u_0(x) \geq 0, \quad u_0 \in C^{2 + \lambda}(\Omega), \label{eq:lin_parabolic_ini_reg}
\end{equation}
and the compatibility condition
\begin{equation}
	\partial_\nu u_0 = 0. \label{eq:lin_parabolic_ini_comp}
\end{equation}
Furthermore, we assume a bounded domain $\Omega$ with
\begin{equation}
	\partial \Omega \in  C^{2 + \lambda},
\end{equation}
and $Q_T = \Omega \times (0,T)$.
\begin{theorem}\label{thm:parabolic_lp}
	If we assume \eqref{eq:lin_parabolic_ini_reg}, \eqref{eq:lin_parabolic_ini_comp}, and moreover $$a~, a_i,~f \in L^p(Q_T)  \quad 1\leq i \leq d, \quad 0 < T < 1,\quad \partial{\Omega} \in C^2,$$ then the problem \eqref{eq:lin_parabolic}-- \eqref{eq:lin_parabolic_ini} has a unique solution
	$$u \in \sobto,$$ which can be bound by
	$$ \| u\|_\sobto \leq C(\| f \|_{L^p(\Omega)}, ~ \|u_0\|_{L^p(\Omega)}), $$
	\begin{proof}
		Follows from \cite[Theorem 9.1 p. 342]{ladyz}.
	\end{proof}
\end{theorem}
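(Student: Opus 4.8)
The plan is to regard \eqref{eq:lin_parabolic}--\eqref{eq:lin_parabolic_ini} as the principal heat operator $\partial_t - D\Delta$ with a homogeneous Neumann condition, perturbed by the lower order terms $\sum_{i} a_i \partial_{x_i} u + a u$. First I would record maximal $L^p$ regularity for the pure problem $w_t - D\Delta w = g$ in $Q_T$, $\partial_\nu w = 0$ on $\partial\Omega\times(0,T)$, $w(\cdot,0)=u_0$. On a bounded domain with $\partial\Omega\in C^2$ the Neumann realization of $-D\Delta$ in $L^p(\Omega)$ generates a bounded analytic semigroup and enjoys maximal $L^p$ regularity, so for every $g\in L^p(Q_T)$ and every admissible initial trace there is a unique $w\in\sobto$ with $\|w\|_\sobto \leq C\big(\|g\|_{L^p(Q_T)} + \|u_0\|_{C^{2+\lambda}(\bar\Omega)}\big)$; the compatibility condition \eqref{eq:lin_parabolic_ini_comp} together with the regularity \eqref{eq:lin_parabolic_ini_reg} guarantees that the initial term is finite (one may bound the sharp Besov initial norm by the stronger $C^{2+\lambda}$ norm). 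This is exactly the linear parabolic solvability theory of Ladyzhenskaya, Solonnikov and Ural'ceva, and in fact the full assertion can be read off directly from \cite[Theorem 9.1, p.~342]{ladyz}.

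To bring in the lower order terms I would set up a contraction. Let $S$ denote the solution operator $g\mapsto w$ of the principal problem with the fixed datum $u_0$ folded in, and define $\Phi(u)=S\!\left(f-\sum_{i=1}^d a_i\partial_{x_i}u - a u\right)$ on $\sobto$. For two candidates $u,\tilde u$ the difference $\Phi(u)-\Phi(\tilde u)$ solves the principal problem with zero initial data and right hand side $-\sum_i a_i\partial_{x_i}(u-\tilde u) - a(u-\tilde u)$, so maximal regularity yields
\begin{equation*}
\|\Phi(u) - \Phi(\tilde u)\|_\sobto \leq C\Big( \sum_{i} \|a_i\|_{\linf}\,\|\nabla(u-\tilde u)\|_{L^p(Q_T)} + \|a\|_{\linf}\,\|u-\tilde u\|_{L^p(Q_T)} \Big).
\end{equation*}
Because $(u-\tilde u)(\cdot,0)=0$ and only \emph{lower} order derivatives appear on the right, an anisotropic interpolation estimate gives $\|\nabla v\|_{L^p(Q_T)}+\|v\|_{L^p(Q_T)}\leq C\,T^{\gamma}\|v\|_\sobto$ with $\gamma>0$ for any $v$ vanishing at $t=0$; hence for small $T$ the map is a contraction. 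Since $0<T<1$ one either closes the argument directly or partitions $[0,T]$ into finitely many short subintervals and iterates, carrying each endpoint value forward as the new initial datum. The fixed point is the desired $u\in\sobto$, uniqueness is built into the contraction, and linearity delivers the stated estimate, with $C$ depending (besides $p,D,T,\Omega$) on the coefficient norms.

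The step I expect to be most delicate is the handling of the merely $L^p$ lower order coefficients: unlike the classical Schauder setting one cannot assume $a_i,a$ continuous, so the products $a_i\partial_{x_i}u$ and $au$ must first be shown to lie in $L^p(Q_T)$ at all. This forces $\nabla u\in\linf$, which by the anisotropic Sobolev embedding $\sobto\hookrightarrow\hoeloh{\lambda}$ used throughout the paper is available only for $p$ large — in $d=2$ for $p>4$, consistent with the choice $p>5$ made in the applications. The remaining effort is bookkeeping: tracking the explicit powers of $T$ that render the Lipschitz constant strictly below one, and verifying that the compatibility and initial regularity survive each restart of the time iteration. As already noted, all of this is subsumed by \cite[Theorem 9.1]{ladyz}, which is why in practice the result is simply quoted rather than reproved.
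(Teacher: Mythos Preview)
The paper's own proof is a bare citation to Ladyzhenskaya--Solonnikov--Ural'ceva and nothing more, so your sketch already goes well beyond what the paper provides; and since you close by observing that the whole argument is subsumed by \cite[Theorem~9.1]{ladyz}, you land in the same place.

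One wobble worth tightening: in your displayed contraction estimate you bound the perturbation by $\|a_i\|_{\linf}$ and $\|a\|_{\linf}$, but the hypothesis only places $a_i,a$ in $L^p(Q_T)$. You correctly diagnose this in your final paragraph---the fix is to pair $\|a_i\|_{L^p}$ with $\|\nabla(u-\tilde u)\|_{L^\infty}$ (and $\|a\|_{L^p}$ with $\|u-\tilde u\|_{L^\infty}$) via the embedding $\sobto\hookrightarrow\coz$, which is exactly why $p>5$ is imposed in the applications. With that pairing the smallness factor in $T$ comes from the time--H\"older exponent of the embedding applied to a function vanishing at $t=0$, precisely the mechanism the paper itself exploits in the fixed-point proof of Theorem~\ref{loc_existence} (cf.\ \eqref{eq:abound}, \eqref{eq:adiffbound}). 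So your outline is a faithful unpacking of what the citation hides; the paper simply declines to reproduce any of it.
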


\begin{theorem}\label{thm:parabolic_hoel}
	Assume that, $$a~, a_i,~f \in \hoels{\lambda}{\lambda/2}  \quad 1\leq i \leq d, \quad 0 < T < 1,\quad \partial \Omega \in C^{2+\lambda}$$ and that \eqref{eq:lin_parabolic_ini_reg}, \eqref{eq:lin_parabolic_ini_comp} are satisfied. Then the problem \eqref{eq:lin_parabolic}-- \eqref{eq:lin_parabolic_ini} has a unique solution
	$$u \in \hoels{\lambda + 2 }{\lambda/2 +1}. $$ 
	\begin{proof}
		Follows from {\cite[Theorem 5.3 p. 320]{ladyz}}.
	\end{proof}
\end{theorem}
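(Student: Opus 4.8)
The plan is to treat Theorem~\ref{thm:parabolic_hoel} as a statement of maximal Schauder (Hölder) regularity for a second-order linear parabolic problem with constant principal part $-D\Delta$, Hölder continuous lower-order coefficients and right-hand side, a homogeneous Neumann boundary condition, and compatible initial data. The cleanest route is to invoke the classical linear theory of Ladyzhenskaya, Solonnikov and Uraltseva: under the stated hypotheses the problem~\eqref{eq:lin_parabolic}--\eqref{eq:lin_parabolic_ini} admits a unique solution in $\hoels{\lambda+2}{\lambda/2+1}$ together with the a~priori estimate
\begin{equation*}
\|u\|_{\hoels{\lambda+2}{\lambda/2+1}} \le C\bigl(\|f\|_{\hoels{\lambda}{\lambda/2}} + \|u_0\|_{C^{2+\lambda}(\bar\Omega)}\bigr),
\end{equation*}
where $C$ depends only on $D$, the Hölder norms of the coefficients, $\Omega$, $\lambda$ and $T$. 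The role of the compatibility condition~\eqref{eq:lin_parabolic_ini_comp} is precisely to make the data at the space-time corner $\partial\Omega\times\{0\}$ consistent, which is what permits the solution to reach full regularity up to the parabolic boundary; the restriction $0<T<1$ is only a harmless normalization.

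If instead one wishes to reconstruct the argument, I would proceed by the freezing-coefficients method. First I would record the Schauder estimates for the constant-coefficient model operator $\partial_t - D\Delta$ on the whole space and on a half-space with the homogeneous Neumann condition; these follow from explicit heat-kernel and Poisson-kernel representations and the classical kernel bounds. Because the principal part here is already constant, the usual step of freezing the top-order coefficient is trivial, so the genuine work is to control the lower-order terms $\sum_i a_i\partial_{x_i}u + au$ and the inhomogeneity on small parabolic cylinders. Covering $\bar Q_T$ by such cylinders, applying the model estimate on each, and patching with a partition of unity, I would move the lower-order contributions into the right-hand side and absorb them using the interpolation inequalities for parabolic Hölder norms, so that $\|a_i\partial_{x_i}u\|_{\hoels{\lambda}{\lambda/2}}$ and $\|au\|_{\hoels{\lambda}{\lambda/2}}$ are each bounded by a small multiple of $\|u\|_{\hoels{\lambda+2}{\lambda/2+1}}$ plus a large multiple of a lower norm. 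This yields the a~priori estimate displayed above.

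With the a~priori estimate in hand, I would obtain existence by the method of continuity, homotoping $L$ to the solvable model $\partial_t - D\Delta$ through the family $L_\sigma = (\partial_t - D\Delta) + \sigma\bigl(\sum_i a_i\partial_{x_i} + a\bigr)$ with $\sigma\in[0,1]$: the uniform estimate keeps the set of solvable $\sigma$ both open and closed, and it is nonempty since $\sigma=0$ is the heat equation. Uniqueness follows from the same estimate applied to the difference of two solutions, equivalently from the parabolic maximum principle after a standard exponential shift $u\mapsto e^{-Kt}u$ that fixes the sign of the zeroth-order term. \textbf{The main obstacle} is the boundary regularity: establishing the Schauder estimate up to $\partial\Omega\times(0,T)$ with the Neumann condition, and in particular matching the normal derivative of $u_0$ through~\eqref{eq:lin_parabolic_ini_comp} so that no loss of regularity occurs at the space-time corner. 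Once the half-space boundary estimate and the corner compatibility are settled, flattening $\partial\Omega$ locally via the $C^{2+\lambda}$ charts and summing over the cover completes the argument.
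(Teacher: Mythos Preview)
Your proposal is correct and, at its core, takes the same route as the paper: the authors simply cite Ladyzhenskaya--Solonnikov--Ural'tseva (Theorem~5.3, p.~320), exactly the ``cleanest route'' you identify first. Your additional reconstruction via freezing coefficients, interpolation, and the method of continuity is a faithful sketch of what underlies that reference, so it goes well beyond what the paper does but is entirely consistent with it.
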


\begin{theorem}\label{thm:sob_embed_hoelder}
	Assume that $\Omega$ satisfies a weak cone condition and $d\in\{1,2,3\}$. If $p>5$, then
	$$ \|u\|_{C^{1+ \lambda, (1+\lambda)/2}(\bar Q_T)} \leq C \|u \|_{W_p^{2,1}(Q_T)},\quad \lambda = 1- \frac{5}{p},$$
	for all $u \in W_p^{2,1}(Q_T).$
	\begin{proof}
		Follows from \cite[Lemma 3.3 p. 80]{ladyz}. 
	\end{proof}
\end{theorem}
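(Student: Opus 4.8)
The plan is to prove this anisotropic parabolic embedding by combining a bounded extension with a heat-kernel (potential-theoretic) representation, working throughout in the parabolic metric $\rho\bigl((x,t),(y,s)\bigr) = |x-y| + |t-s|^{1/2}$, with respect to which $Q_T$ has homogeneous dimension $Q = d+2$ (a parabolic ball of radius $r$ has measure $\sim r^{d+2}$). First I would reduce to the case $d=3$: since on the bounded set $\bar Q_T$ a larger Hölder exponent embeds continuously into a smaller one, and since for $d<3$ the argument below furnishes the strictly better exponent $2-(d+2)/p$, the value $\lambda = 1-5/p$ coming from $d=3$ is admissible for every $d\in\{1,2,3\}$. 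The hypothesis $p>5=d+2$ is exactly what makes $\lambda\in(0,1)$.

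Next I would extend $u$ from $Q_T$ to the full space–time $\mathbb{R}^{d+1}$ while controlling its $W_p^{2,1}$ norm. The weak cone condition on $\Omega$ is precisely what supplies a bounded spatial (Stein-type) extension of the Sobolev factor, and the time interval is handled by an even reflection across $t=0$ and $t=T$, which preserves $\partial_t u,\ \nabla^2 u \in L^p$. After multiplying by a fixed space–time cutoff I may assume $u$ is compactly supported. Setting $f := \partial_t u - \Delta u \in L^p(\mathbb{R}^{d+1})$, the representation $u = \Gamma * f$, with $\Gamma$ the fundamental solution of $\partial_t - \Delta$, reduces the whole statement to kernel estimates.

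The key observation is that, in the parabolic scaling $x\mapsto r x,\ t \mapsto r^2 t$, the kernel $\Gamma$ is homogeneous of degree $-d = -(Q-2)$, so $u=\Gamma*f$ is an order-$2$ parabolic Riesz potential of $f$, while $\nabla u = (\nabla\Gamma)*f$ is an order-$1$ potential, with $|\nabla\Gamma(z)|\le C\,\rho(z,0)^{-(Q-1)}$ and the matching gradient bound. Fractional integration in the parabolic metric (the anisotropic Hardy--Littlewood--Sobolev/Morrey estimate) then gives, for $p>Q=d+2$, that $\nabla u$ is parabolic-Hölder of order $1-(d+2)/p=\lambda$, i.e.\ spatial exponent $\lambda$ and temporal exponent $\lambda/2$, so that $u\in C^{1+\lambda}$ in space; applying the order-$2$ version directly to $\Gamma*f$ yields the temporal modulus $|u(x,t)-u(x,s)|\le C|t-s|^{(1+\lambda)/2}$. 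Combining these with the trivial $L^p$ bound produces $\|u\|_{C^{1+\lambda,(1+\lambda)/2}(\bar Q_T)} \le C\|f\|_{L^p} + C\|u\|_{L^p} \le C\|u\|_{W_p^{2,1}(Q_T)}$.

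The main obstacle is the anisotropy: because time scales like length squared, the isotropic Morrey embedding cannot be quoted verbatim, and the delicate point is the oscillation estimate for the order-$1$ potential $\nabla\Gamma*f$ in the parabolic metric, namely that the difference of $\nabla u$ at two points $z,z'$ is bounded by $C\,\rho(z,z')^{\lambda}\|f\|_{L^p}$ uniformly. This is proved by splitting the convolution into a near-field and a far-field part relative to the parabolic ball of radius $\rho(z,z')$, using H\"older's inequality with the sharp integrability of $|\nabla\Gamma|$ on the near field and the mean-value bound for $\nabla\Gamma$ on the far field; this is exactly the computation carried out in \cite[Lemma~3.3]{ladyz}, on which I would ultimately rely for the sharp constants.
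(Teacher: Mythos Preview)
Your proposal is correct and, in fact, supplies far more than the paper does: the paper's entire proof of this theorem is the single sentence ``Follows from \cite[Lemma~3.3 p.~80]{ladyz},'' i.e.\ a bare citation to the Ladyzhenskaya--Solonnikov--Ural'tseva embedding lemma. What you have written is essentially an outline of how that lemma is proved, via extension to full space--time, the representation $u=\Gamma*(\partial_t u-\Delta u)$, and anisotropic Morrey/Hardy--Littlewood--Sobolev estimates for the order-$1$ and order-$2$ parabolic Riesz potentials in the metric with homogeneous dimension $Q=d+2$. Your observation that the stated exponent $\lambda=1-5/p$ is the $d=3$ value, and that for $d=1,2$ one actually gets the better $1-(d+2)/p$ which on a bounded cylinder embeds into $C^{1+\lambda,(1+\lambda)/2}$, is exactly the reason the theorem is phrased uniformly over $d\in\{1,2,3\}$.

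The only step that deserves a word of caution is the time extension: even reflection across $t=0$ and $t=T$ preserves $\partial_t u\in L^p$ because $u\in W^{2,1}_p$ has a well-defined trace $u(\cdot,0)\in B^{2-2/p}_{p,p}(\Omega)$, so $u$ is continuous across the reflection hyperplane as an $L^p(\Omega)$-valued function and no singular layer appears in $\partial_t\tilde u$; one then multiplies by a smooth cutoff as you say. This is standard, but worth stating since otherwise the representation $u=\Gamma*f$ for compactly supported $u$ would not be justified. With that caveat, your sketch is sound and matches the potential-theoretic machinery behind the cited lemma.
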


\begin{theorem}\label{thm:sob_embed_gradient}
	Assume that $\Omega$ satisfies a weak cone condition and $d=2$. If $q\geq 4$ then
	$$ \|\nabla u\|_{L^p(\bar Q_T)} \leq C(p) \|u \|_{W_q^{2,1}(Q_T)}, \quad \text{for all } p>4,$$
	for all $u \in W_p^{2,1}(Q_T).$
	\begin{proof}
		Follows from \cite[Lemma 3.3 p. 80]{ladyz}
	\end{proof}
\end{theorem}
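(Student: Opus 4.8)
The statement is a special case of the anisotropic (parabolic) Sobolev embedding theorem of Ladyzhenskaya–Solonnikov–Ural'ceva, and the plan is to reduce it to that theorem after the correct scaling bookkeeping. The natural reading is $u\in W_q^{2,1}(Q_T)$ with $q\ge 4$, and the assertion bounds $\nabla u$ in $L^p(Q_T)$. First I would fix the parabolic scaling on $Q_T\subset\mathbb{R}^{d+1}$ in which the time variable carries weight $2$; the homogeneity (\emph{parabolic}) dimension is then $n:=d+2$, which equals $4$ since $d=2$. In these terms $W_q^{2,1}(Q_T)$ is the space of functions of parabolic smoothness $2$ whose derivatives up to that order lie in $L^q(Q_T)$, and the embedding theorem assigns to a derivative of parabolic order $j$ the gain $\tfrac1r=\tfrac1q-\tfrac{2-j}{n}$: one lands in $L^r$ when the right-hand side is positive, in every finite $L^r$ when it vanishes, and in a Hölder class when it is negative.

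Next I would apply this with $j=1$, since $\nabla u$ is a derivative of parabolic order $1$, and with $n=4$. The exponent relation becomes
\[
\frac1r=\frac1q-\frac1{4}.
\]
For $q=4$ the right-hand side is exactly $0$, so $\nabla u\in L^r(Q_T)$ for every finite $r$; for $q>4$ it is strictly negative, so in fact $\nabla u\in C^{1-4/q,(1-4/q)/2}(\bar Q_T)$ and a fortiori $\nabla u\in L^r(Q_T)$ for every finite $r$. In either case, for $q\ge 4$ and any finite $p$ — in particular every $p>4$ — one obtains the quantitative estimate $\|\nabla u\|_{L^p(Q_T)}\le C(p)\,\|u\|_{W_q^{2,1}(Q_T)}$, which is precisely \cite[Lemma 3.3 p. 80]{ladyz}. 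The restriction $p>4$ in the statement costs nothing: since $|Q_T|<\infty$, Hölder's inequality immediately propagates the bound to all smaller exponents as well.

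The main obstacle, and the reason I would invoke the full anisotropic theorem rather than argue by hand, is the borderline case $q=4$. A naive route freezes time and uses the spatial Gagliardo–Nirenberg inequality in $d=2$, namely $\|\nabla u(\cdot,t)\|_{L^p(\Omega)}\le C\,\|\nabla^2 u(\cdot,t)\|_{L^q(\Omega)}^{1/2}\,\|u(\cdot,t)\|_{L^\infty(\Omega)}^{1/2}$, where $u\in L^\infty(Q_T)$ is already guaranteed because $q\ge4>(d+2)/2$; raising to the $p$-th power and integrating in $t$ then controls $\|\nabla u\|_{L^p(Q_T)}$, but only under the constraint $p\le 2q$, because the time integration forces $p\theta\le q$ (with $\theta=\tfrac12$). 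Thus the fixed-time argument never reaches \emph{all} finite $p$ at $q=d+2$: it discards the information $\partial_t u\in L^q(Q_T)$ and therefore misses the parabolic smoothing that couples the space and time directions. Capturing every finite $p$ at the critical exponent genuinely requires that coupling — e.g. representing $\nabla u$ through the parabolic (heat) potential and estimating the resulting Riesz-type kernel, which is exactly what the proof of \cite[Lemma 3.3 p. 80]{ladyz} carries out — so the cleanest path is to cite that result directly.
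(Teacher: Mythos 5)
Your proposal is correct and coincides with the paper's own proof, which likewise deduces the estimate directly from \cite[Lemma 3.3, p.~80]{ladyz}: with parabolic dimension $d+2=4$ and a first-order derivative, the exponent relation $\frac1r=\frac1q-\frac14$ gives $\nabla u\in L^p(Q_T)$ for every finite $p$ once $q\geq 4$, exactly as you computed. Your additional remarks --- handling the borderline case $q=4$ via the full anisotropic theorem rather than a fixed-time Gagliardo--Nirenberg argument, and noting the typo-level reading $u\in W_q^{2,1}(Q_T)$ --- are sound elaborations of the same citation, not a different route.
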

\section{Proof of Lemma \ref{thm:lemma-3.2.Tao}}\label{sec:proof_of_lemma-3.2.Tao}
\begin{proof}
	
	Let  $A_\rho$ be the sectorial operator defined by $A_\rho u =-\Delta u$ over the domain 
	\[
	D(A_\rho)=\left\{ u\in W^{2,\rho} (\Omega) \text{ with }  \frac{\partial u}{\partial \nu} \Big |_{\Gamma_T}=0 \right\}\,. 
	\]
	We will be needing the following embedding properties of the domains of fractional powers of the operators $A_p+1$:
	\begin{subequations}		
		\begin{align}
			D\((A_p+1)^\beta\)  \hookrightarrow W^1_p(\Omega),&\quad \text{for }\beta>\frac 1 2\, , \label{eq:embedd.1}\\
			D\((A_p+1)^\beta\) \hookrightarrow C^\delta (\Omega),&\quad \text{for }\beta -\frac{d}{2p}>\frac{\delta}{2} \geq 0\,,\label{eq:embedd.2}
		\end{align}
	\end{subequations}
	and refer to \cite{horstmann2005boundedness,henry1981geometric} and the references therein for further details.

	We consider the representation formula for the solution of the equation for $m$ in \eqref{eq:reduced_II}
	$$m(t)=\underbrace{e^{-t(A_\rho+1)}m_0}_{B_1(t)} + \underbrace{\int_0^t  e^{-(t-r)(A_\rho+1)}\(\cd(r)+\cs(r)\)dr}_{B_2(t)}, \qquad t\in(0,T).$$
	To deduce a control over $m$ we consider the two components separately.
	\begin{description}
		\item[] For $B_1(t)$. 
		\begin{itemize}
			\begin{subequations}
				\item If $2\leq q\leq \infty$, then 
				$B_1$ and $m_0$ have the same regularity, see \cite{horstmann2005boundedness}, and hence
				\begin{equation}\label{eq:21.6a}
					\|B_1(t)\|_{W^1_q(\Omega)}\leq C\, \|m_0\|_{W^1_q(\Omega)}.
				\end{equation}
				\item If $q<2$, then 
				\begin{equation}\label{eq:21.6b}
					\|B_1(t)\|_{W^1_q(\Omega)}\leq\|B_1(t)\|_{W^1_2(\Omega)}\leq C\, \|m_0\|_{W^1_2(\Omega)}.
				\end{equation}
			\end{subequations}
		\end{itemize}	
		
		\item[] For $B_2(t)$.\\
		We consider the analytic semigroup $\(e^{-tA_\rho}\)_{t\geq 0}$, and its properties $\|(A_\rho+1)^\beta e^{-t(A_\rho+1)}u\|_{L^p(\Omega)}  \leq ct^{-\beta} e^{-v_1t} \|u\|_{L^p}$, for all $u\in L^p(\Omega)$, $t\geq 0$, and for some $v_1>0$, and $\|e^{-tA_\rho} u\|_{L^q(\Omega)} \leq ct^{-\frac d 2(\frac 1 p -\frac 1 q)}\|u\|_{L^p(\Omega)}$, for all $t\in(0,1)$ and $1\leq p<q<\infty$, see also \cite{horstmann2005boundedness}.
		
		Accordingly we can write the following $L^\rho$-$L^q$ estimate, for $\tau>0$
		\begin{align*}
			\|(A_\rho+1)^\beta  e^{-2\tau A_\rho}u \|_{L^q(\Omega)}=&\| (A_\rho+1)^\beta  e^{-\tau(A_\rho+1)}e^{-\tau A_\rho} e^{\tau}u \|_{L^q(\Omega)}\\
			\leq & c \tau^{-\beta} e^{-v_1\tau}\|e^{-\tau A_\rho} e^{\tau} u\|_{L^q(\Omega)}\\
			\leq & \tilde c \tau^{-\beta} e^{-v_1\tau} \tau^{-\frac d 2(\frac 1 \rho -\frac 1 q)}\|e^{\tau}u\|_{L^p(\Omega)}\\
			\leq & 	\tilde c \tau^{-\beta -\frac d 2\(\frac 1 \rho -\frac 1 q\)} e^{\(1-v_1\)\tau} \|u\|_{L^p(\Omega)},
		\end{align*}
		or by setting $t=2\tau$,
		\begin{align}
			\|(A_\rho+1)^\beta  e^{-t A_\rho}u \|_{L^q(\Omega)} \leq &\tilde c \(\frac{t}{2}\)^{-\beta -\frac d 2\(\frac 1 \rho -\frac 1 q\)} e^{\(1-v_1\)\frac t 2 } \|u\|_{L^p(\Omega)} \nonumber \\
			\leq & C t^{-\beta -\frac d 2\(\frac 1 \rho -\frac 1 q\)} e^{\(1-\mu \)t  }e^{-\frac t 2} \|u\|_{L^p(\Omega)}\nonumber \\
			\leq & C t^{-\beta -\frac d 2\(\frac 1 \rho -\frac 1 q\)} e^{\(1-\mu \)t  }\|u\|_{L^p(\Omega)} \label{eq:B2_estimate}
		\end{align}
		for some $\mu >0.$
		
		Applying now \eqref{eq:B2_estimate} to $B_2$, it reads
		\begin{align*}
			\|\(A_\rho+1\)^\beta B_2\|_{L^q(\Omega)}&\leq C\int_0^t (t-r)^{-\beta -\frac{d}{2}\(\frac{1}{\rho}-\frac{1}{q}\)}e^{-\mu(t-r)}\|u (r)\|_{L^\rho(\Omega)}dr\\
			&\leq C \sup_t\|u(t)\|_{L^\rho(\Omega)} \int_0^t (t-r)^{-\beta -\frac{d}{2}\(\frac{1}{\rho} -\frac{1}{q}\) }e^{-\mu (t-r)}dr\,,
		\end{align*}
		where the integral is finite, and in effect $B_2(t)\in D((A_\rho+1)^\beta) $, as long as 
		\begin{equation}\label{eq:21.7}
			-\beta -\frac{d}{2}\(\frac 1 \rho - \frac 1 q\)>-1.
		\end{equation}				
		To this end we distinguish the following sub-cases:
		\begin{itemize}
			\item If $\rho < d$ then there exist $\frac 1 2 < \beta < 1$ such that \eqref{eq:21.7} reads
			$$q < \frac{1}{\frac 1 \rho - \frac 1 d + \frac 2 d\(\beta  - \frac 1 2\)}.$$
			By the embedding now \eqref{eq:embedd.1} of the domain of the operator $\(A_q + 1\)^\beta $ we deduce that
			\begin{equation}
				\|B_2(t)\|_{W^1_q(\Omega)} \leq C,
			\end{equation}
			which along with the bounds \eqref{eq:21.6a} and\eqref{eq:21.6b} of $\|B_1\|$ leads to \eqref{eq:21.4}.
			\item If $\rho=d$, the condition \eqref{eq:21.7} recasts into
			$$ \beta < \frac 1 2 + \frac{d}{2q},$$
			which is satisfied by some $\frac 1 2 < \beta < 1$ for every $q> \rho=d$, and thus \eqref{eq:21.4} follows for $q< \infty$.
			\item If $\rho >d$ there by \eqref{eq:21.7} $\beta < 1-\frac{d}{2\rho} +\frac{d}{2q}$ and since $\frac 1 2 + \frac{d}{2q}< 1-\frac{d}{2\rho} +\frac{d}{2q}$ there exist $\beta$ such that
			$$ \frac 1 2 + \frac{d}{2q} < \beta < 1-\frac{d}{2\rho} +\frac{d}{2q},$$
			such that the embedding \eqref{eq:embedd.2} is valid for $\delta=1$, and reads
			$$D\( (A_q +1)^\beta \)\hookrightarrow C^1 (\bar \Omega),$$
			from which \eqref{eq:21.4} yields for $q=\infty$.
		\end{itemize}
		
	\end{description}
\end{proof}
\begin{multicols}{2}
	\bibliographystyle{plain}
	\small
	\bibliography{Analysis}
\end{multicols}

\end{document}